\newtheorem{te}{Theorem}[section]
\newtheorem{pro}[te]{Proposition}
\newtheorem{co}[te]{Corollary}
\newtheorem{lemma}[te]{Lemma}
\theoremstyle{definition}
\newtheorem{de}[te]{Definition}
\newtheorem{re}[te]{Remark}
\numberwithin{equation}{section}
\newcommand{\beq}{\begin{eqnarray}}
\newcommand{\eeq}{\end{eqnarray}}
\newcommand{\beqs}{\begin{eqnarray*}}
\newcommand{\eeqs}{\end{eqnarray*}}
\newcommand{\NN}{\mathbb N}
\newcommand{\CC}{\mathbb C}
\newcommand{\RR}{\mathbb R}
\newcommand{\ZZ}{\mathbb Z}
\newcommand{\EE}{\mathcal E}
\newcommand{\DD}{\mathcal D}
\newcommand{\SSS}{\mathcal S}
\newcommand{\ds}{\displaystyle}
\begin{document}

\title[Convolution and translation-invariant spaces]{Convolution of ultradistributions and ultradistribution spaces associated to translation-invariant Banach spaces}

\author[P. Dimovski]{Pavel Dimovski}
\address{P. Dimovski, Faculty of Technology and Metallurgy, University Ss. Cyril and Methodius, Ruger Boskovic 16, 1000 Skopje, Republic of Macedonia}
\email{dimovski.pavel@gmail.com}

\author[S. Pilipovi\'{c}]{Stevan Pilipovi\'{c}}
\thanks{S. Pilipovi\'{c} is supported by the Serbian Ministry of Education, Science and Technological Development, through the project 174024}
\address{S. Pilipovi\'{c}, Department of Mathematics and Informatics, University of Novi Sad, Trg Dositeja Obradovi\'ca 4, 21000 Novi Sad, Serbia}
\email {stevan.pilipovic@dmi.uns.ac.rs}

\author[B. Prangoski]{Bojan Prangoski}
\address{B. Prangoski, Faculty of Mechanical Engineering, University Ss. Cyril and Methodius,
Karpos II bb, 1000 Skopje, Macedonia}
\email{bprangoski@yahoo.com}

\author[J. Vindas]{Jasson Vindas}
\thanks{J. Vindas gratefully acknowledges support by Ghent University, through the BOF-grant
01N01014}
\address{J. Vindas, Department of Mathematics, Ghent University, Krijgslaan 281 Gebouw S22, 9000 Gent, Belgium}
\email{jvindas@cage.UGent.be}

\subjclass[2010]{Primary 46F05; Secondary 46H25; 46F10; 46E10}
\keywords{Convolution of ultradistributions; Translation-invariant Banach space of tempered ultradistributions; Ultratempered convolutors; Beurling algebra; parametrix method}

\begin{abstract}We introduce and study a number of new spaces of ultradifferentiable functions and ultradistributions and we apply our results to the study of the convolution of ultradistributions. The spaces of convolutors $\mathcal{O}'^{\ast}_{C}(\mathbb{R}^{d})$ for tempered ultradistributions are analyzed via the duality with respect to the test function spaces $\mathcal{O}^{\ast}_{C}(\mathbb{R}^{d})$, introduced in this article. We also study ultradistribution spaces associated to translation-invariant Banach spaces of tempered ultradistributions and use their properties to provide a full characterization of the general convolution of Roumieu ultradistributions via the space of integrable ultradistributions. We show that the convolution of two Roumieu ultradistributions $T,S\in \DD'^{\{M_p\}}\left(\RR^d\right)$ exists if and only if $\left(\varphi*\check{S}\right)T\in\DD'^{\{M_p\}}_{L^1}\left(\RR^d\right)$
for every $\varphi\in\DD^{\{M_p\}}\left(\RR^d\right)$.
\end{abstract}

\maketitle

\section{Introduction}
This article is devoted to the study of various problems concerning the convolution in the setting of ultradistributions. A detailed study of some of such problems has been lacking in the theory of ultradistributions for more than 30 years. In addition, we introduce new spaces of ultradifferentiable functions and ultradistributions associated to a class of translation-invariant Banach spaces as an essential tool in this work.

In the first part of the paper we analyze the space of convolutors -- called here ultratempered convolutors -- for the space of tempered ultradistributions.
Naturally, such an investigation would be of general interest as being part of the modern theory of multipliers. In the case of tempered distributions, the space of convolutors was introduced by Schwartz \cite{S} and its full topological characterization was given years later in Horv\'{a}th's book \cite{horvath} (see also \cite{OW14}). The space of ultratempered convolutors $\mathcal O_{C}'^{\ast}(\mathbb{R}^{d})$ was recently studied in \cite{PBD}. Our first important result is the description of $\mathcal O_{C}'^{\ast}(\mathbb{R}^{d})$ through the duality with respect to the test function space $\mathcal O_C^{\ast}(\mathbb{R}^{d})$, constructed in this article. The treatment of the Roumieu case is considerably more elaborated than the Beurling one, as it involves the use of dual Mittag-Leffler lemma arguments for establishing the sought duality.

The second important achievement of the article is related to the existence of the general convolution of ultradistributions of Roumieu type. After the introduction of Schwartz' conditions for the general convolvability of distributions, many authors gave alternative definitions and established their equivalence. Notably, Shiraishi \cite{Sh} found out that the convolution of two distributions $S,T\in\mathcal{D}'(\mathbb{R}^{d})$ exists if and only if: $\left(\varphi*\check{S}\right)T\in\DD'_{L^1}\left(\RR^d\right)$
for every $\varphi\in\DD\left(\RR^d\right)$.  The existence of the convolution for Beurling ultradistributions can be treated \cite{KKP94,KK13,PilipovicC} analogously as that for Schwartz distributions. In contrast, corresponding characterizations for the convolution of Roumieu ultradistributions has been a long-standing open question in the area. It was only until recently \cite{PB} that progress in this direction was made through the study of $\varepsilon$ tensor products of $\dot{\tilde{\mathcal{B}}}^{\{M_p\}}$ and locally convex spaces. The following characterization of convolvability was shown in \cite{PB}: The convolution of two ultradistributions $T,S\in \DD'^{\{M_p\}}\left(\RR^d\right)$ exists if and only if $\left(\varphi*\check{S}\right)T\in\tilde{\DD'}^{\{M_p\}}_{L^1}\left(\RR^d\right)$
for every $\varphi\in\DD^{\{M_p\}}\left(\RR^d\right)$ and for every compact subset $K$ of $\RR^d$,
$(\varphi,\chi)\mapsto \left\langle\left(\varphi*\check{T}\right)S,\chi\right\rangle$, $\DD^{\{M_p\}}_K\times\dot{\tilde{\mathcal{B}}}^{\{M_p\}}\longrightarrow \CC$, is a continuous bilinear mapping.
The spaces $\dot{\tilde{\mathcal{B}}}^{\{M_p\}}$ and $\tilde{\DD'}^{\{M_p\}}_{L^1}\left(\RR^d\right)$ were introduced in \cite{Pilipovic}.
In this paper we shall make a significant improvement to this result, namely,  we shall show the following more transparent version of Shiraishi's result for Roumieu ultradistributions: \emph{the convolution of $T,S\in \DD'^{\{M_p\}}\left(\RR^d\right)$ exists if and only if $\left(\varphi*\check{S}\right)T\in\DD'^{\{M_p\}}_{L^1}\left(\RR^d\right)$
for every $\varphi\in\DD^{\{M_p\}}\left(\RR^d\right)$.}

Our proof of the above-mentioned result about the general convolvability of Roumieu ultradistributions is postponed to the last section of the article and it is based upon establishing the topological equality $\tilde{\DD'}^{\{M_p\}}_{L^1}={\DD'}^{\{M_p\}}_{L^1}.$ This and other topological properties of the spaces of  integrable ultradistributions can be better understood from a rather broader perspective. In this paper we introduce and study new classes of  translation-invariant ultradistribution spaces which are natural generalizations of the weighted $\mathcal{D}'^{\ast}_{L^{p}}$-spaces \cite{BFG,PilipovicK}. In the distribution setting, the recent work \cite{DPV} extends that of Schwartz on the $\mathcal{D}'_{L^{p}}$-spaces and that of Ortner and Wagner on their weighted versions \cite{OW90,Wagner}; recent applications of those ideas to the study of boundary values of holomorphic functions and solutions to the heat equation can be found in \cite{DPV1}.
The theory we present here is a generalization of that given in \cite{DPV} for distributions. Although some results are analogous to those for distributions, it should be remarked that their proofs turn out to be much more complicated since they demand the use of more sophisticated techniques and new ideas adapted to the ultradistribution setting-- especially in the Roumieu case.

The article is organized in eight sections. In Section \ref{sekcijakonvolutori} we characterize the spaces of  tempered ultradistributions $\mathcal{S}'^{\ast}(\mathbb{R}^{d})$ in terms of growth estimates for convolution averages of their elements, extending thus an important structural theorem of Schwartz \cite[Thm. VI, p. 239]{S}. Using Komatsu's approach to ultradistribution theory \cite{Komatsu1}, we define the test function spaces $\mathcal{O}^*_C(\RR^d)$ whose strong duals are algebraically isomorphic to the ultratempered convolutor spaces $\mathcal{O}'^*_C(\RR^d)$. We also obtain there structural theorems for $\mathcal{O}'^*_C(\RR^d)$.

Section \ref{UTIB} is dedicated to the analysis of translation-invariant Banach spaces of tempered ultradistributions. We are interested in the class of Banach spaces of ultradistributions that satisfy the ensuing three conditions: (I) $\mathcal{D}^*(\mathbb{R}^d)\hookrightarrow E\hookrightarrow \mathcal{D}'^*(\mathbb{R}^d)$, (II) $E$ is translation-invariant and (III) the function $\omega(h):=\| T_{-h} \| $ has at most ultrapolynomial growth. Such $E$ becomes a Banach module over the Beurling algebra $L_\omega^1$ and has nice approximation properties with respect to the translation group. In particular, we show that the translation group on $E$ is a $C_0$-semigroup (i.e., $\lim_{h\rightarrow 0}\|T_hg-g\|_{E}=0$ for each $g\in E$).  Using duality, we obtain some results concerning $E'$ which also turns out to be a Banach module over the Beurling algebra $L_{\check{\omega}}^1$, but $E'$ may fail to have many of the properties that $E$ enjoys. That motivates the introduction of a closed subspace $E'_*$ of $E'$ that satisfies the axioms (II) and (III) and it is characterized as the biggest subspace of $E'$ for which $\lim_{h\rightarrow 0}\|T_hf-f\|_{E'}=0$ for all its elements.

In Section \ref{tspace} we define our new test spaces $\mathcal{D}_E^{(M_p)}$ and $\mathcal{D}_E^{\{M_p\}}$ of Beurling and Roumieu type, respectively. In the Roumieu case we also consider another space $\tilde{\mathcal{D}}_E^{\{M_p\}}$ (in connection to it, see \cite{Komatsu3} for related spaces). We show that the elements of all these test spaces are in fact ultradifferentiable functions and the continuous and dense embeddings $\mathcal{S}^*(\mathbb{R}^d)\hookrightarrow\mathcal{D}^{*}_E\hookrightarrow E\hookrightarrow \mathcal{S}'^*(\mathbb{R}^d)$ hold. We also prove that the spaces $\mathcal{D}^{*}_E$ are topological modules over the Beurling algebra
$L^{1}_{\omega}$. The spaces $\mathcal{D}^{*}_E$ are continuously and densely embedded into the spaces $\mathcal{O}^{*}_C(\RR^d)$ introduced in Section \ref{sekcijakonvolutori}.

In Section \ref{subsection DE} we investigate the topological and structural properties of the strong dual of $\mathcal{D}^{*}_E$, denoted as $\mathcal{D}'^{*}_{E'_*}$. A structural theorem for $\mathcal{D}'^{*}_{E'_*}$ is given; there, we caracterize its elemets in terms of convolution averages and also via representations as finite sums of actions of ultradifferential operators on elements from $E'_*$. Our results enable us to embed the spaces $\mathcal{D}^{*}_E$ into the spaces of $E'_*$-valued tempered ultradistributions $\mathcal{S}'^*(\RR^d, E'_*)$. We prove that the spaces $\mathcal{D}_E^{\{M_p\}}$ and $\tilde{\mathcal{D}}_E^{\{M_p\}}$ are topologically isomorphic. When $E$ is reflexive, we show that $\mathcal{D}^{(M_p)}_{E}$ and $\mathcal{D}'^{\{M_p\}}_{E'}$ are $(FS^*)$-spaces, while $\mathcal{D}^{\{M_p\}}_{E}$ and $\mathcal{D}'^{(M_p)}_{E}$ are $(DFS^*)$-spaces.

Section \ref{examples} is devoted to the weighted spaces $\mathcal{D}^*_{L^{p}_{\eta}}$ and $\mathcal{D}'^*_{L_{\eta}^{p}}$, which we treat here as examples of the spaces $\mathcal{D}^{*}_{E}$ and  $\mathcal{D}'^*_{E'_*}$. This approach allows us to prove the topological identification of $\mathcal {D}^*_{C_\eta}$ with the spaces $\dot{\mathcal{B}}^*_\eta$ and $\dot{\tilde{\mathcal{B}}}^*_\eta$, which actually leads to the topological equality $\tilde{\DD'}^{\{M_p\}}_{L^1}={\DD'}^{\{M_p\}}_{L^1}$ and additional topological information about $\mathcal{D}^*_{L^\infty}$.

Finally, Section \ref{section convolution} deals with applications to the study of the convolution of ultradistributions.  We provide there the announced improvement to the result from \cite{PB} for the existence of the general convolution of Roumieu ultradistributions. We also obtain in this section results concerning convolution and multiplicative products on the spaces $\mathcal{D}'^*_{E'_*}$, generalizing distribution analogues from \cite{DPV}.

\section{Preliminaries}

As usual in this theory,  $M_{p}$, $p\in \NN$, $M_0=1$, denotes a sequence of positive numbers for which we assume (see \cite{Komatsu1}):
 $(M.1)$ $M_{p}^{2} \leq M_{p-1} M_{p+1}$, $p \in\ZZ_+$;
 $(M.2)$  $\ds M_{p} \leq c_0H^{p} \min_{0\leq q\leq p} \{ M_{p-q} M_{q}\}$, $p,q\in \NN$, for some $c_0,H\geq1$;
 $(M.3)$  $\sum^{\infty}_{p=q+1}   M_{p-1}/M_{p}\leq c_0q M_{q}/M_{q+1}$, $q\in \ZZ_+$. For a multi-index $\alpha\in\NN^d$, $M_{\alpha}$ means $M_{|\alpha|}$. The associated function of the sequence $M_{p}$ is given by the function $\ds M(\rho)=\sup  _{p\in\NN}\ln_+   \frac{\rho^{p}}{M_{p}}$, $\rho > 0$. It is a nonnegative, continuous, monotonically increasing function, vanishes for sufficiently small $\rho>0$, and increases more rapidly than $\ln \rho^p$ as $\rho$ tends to infinity, for any $p\in\NN$ (cf. \cite[p. 48]{Komatsu1}).

Let $U\subseteq\RR^d$ be an open set and $K\Subset U$ be a compact subset. (We will always use this notation for a compact subset of an open set.) Recall that $\EE^{\{M_p\},h}(K)$ stands for the Banach space (from now on abbreviated as $(B)$-space) of all $\varphi\in C^{\infty}(U)$ which satisfy $p_{K,h}(\varphi)=\ds\sup_{\alpha\in\NN^d}\sup_{x\in K}\frac{|D^{\alpha}\varphi(x)|}{h^{\alpha}M_{\alpha}}<\infty$ and $\DD^{\{M_p\}}_{K,h}$ stands for its subspace consisting of elements supported by $K$. Then
$$
\EE^{(M_p)}(U)=\lim_{\substack{\longleftarrow\\ K\Subset U}}\lim_{\substack{\longleftarrow\\ h\rightarrow 0}} \EE^{\{M_p\},h}(K),\,\,\,\,
\EE^{\{M_p\}}(U)=\lim_{\substack{\longleftarrow\\ K\Subset U}}
\lim_{\substack{\longrightarrow\\ h\rightarrow \infty}} \EE^{\{M_p\},h}(K),
$$
\beqs
\DD^{(M_p)}_K=\lim_{\substack{\longleftarrow\\ h\rightarrow 0}} \DD^{\{M_p\}}_{K,h},\,\,\,\, \DD^{(M_p)}(U)=\lim_{\substack{\longrightarrow\\ K\Subset U}}\DD^{(M_p)}_K,\\
\DD^{\{M_p\}}_K=\lim_{\substack{\longrightarrow\\ h\rightarrow \infty}} \DD^{\{M_p\}}_{K,h},\,\,\,\, \DD^{\{M_p\}}(U)=\lim_{\substack{\longrightarrow\\ K\Subset U}}\DD^{\{M_p\}}_K.
\eeqs

The spaces of ultradistributions and compactly supported ultradistributions of Beurling and Roumieu type are defined as the strong duals of $\DD^{(M_p)}(U)$ and $\EE^{(M_p)}(U)$, and $\DD^{\{M_p\}}(U)$ and $\EE^{\{M_p\}}(U)$ respectively. We refer to \cite{Komatsu1,Komatsu2,Komatsu3} for the properties of these spaces. Following Komatsu \cite{Komatsu1}, the common notation for $(M_p)$ and $\{M_p\}$ will be $*$. In the definitions and statements where we consider the $(M_p)$ and $\{M_p\}$ cases simultaneously, we will always first state the assertions for the Beurling case followed by the corresponding assertion for the Roumieu case in parentheses.

  We define ultradifferential operators as in \cite{Komatsu1}. The function $P(\xi ) = \sum _{\alpha \in \NN^d} c_{\alpha } \xi ^{\alpha}$, $\xi \in \RR^d$, is called an
ultrapolynomial of the class $(M_p)$ (of class $\{M_{p}\}$) if the coefficients $c_{\alpha }$ satisfy
the estimate $|c_{\alpha }|  \leq C L^{\alpha }/M_{\alpha}$, $\alpha \in \NN^d$,
for some $C,L>0$ (for every $L > 0 $ and a corresponding $C=C_{L} > 0$). Then $P(D)=\sum_{\alpha} c_{\alpha}D^{\alpha}$ is an ultradifferential operator of the class $*$ and it acts continuously on $\EE^{*}(U)$ and $\DD^{*}(U)$ and the corresponding spaces of ultradistributions $\EE'^{*}(U)$ and $\DD'^{*}(U)$.

We denote as $\mathfrak{R}$ the set of all positive sequences which monotonically increase to infinity. For $(r_j)\in\mathfrak{R}$, we write $R_k$ for the product $\prod_{j=1}^k r_j$ and $R_0=1$. For $(r_p)\in\mathfrak{R}$, consider the sequence $N_0=1$, $N_p=M_pR_p$, $p\in\ZZ_+$. Its associated function will be denoted by $N_{r_p}(\rho)$, that is, $\ds N_{r_{p}}(\rho )=\sup_{p\in\NN} \ln_+ \frac{\rho^{p }}{M_pR_p}$, $\rho > 0$. As proved in \cite[Prop. 3.5]{Komatsu3}, the seminorms $\|\varphi\|_{K,(r_j)}=\ds \sup_{\alpha\in\NN^d}\sup_{x\in K} \frac{\left|D^{\alpha}\varphi(x)\right|}{R_{\alpha}M_{\alpha}}$, when $K$ ranges over compact subsets of $U$ and $(r_j)$ in $\mathfrak{R}$, give the topology of $\EE^{\{M_p\}}(U)$. Also, for $K\Subset\RR^d$, the topology of $\DD^{\{M_p\}}_K$ is given by the seminorms
$\|\cdot\|_{K,(r_j)}$, with $(r_j)$ ranging over $\mathfrak{R}$. From this it follows that $\ds \DD^{\{M_p\}}_K=\lim_{\substack{\longleftarrow\\ (r_j)\in\mathfrak{R}}} \DD^{\{M_p\}}_{K,(r_j)}$,
where $\DD^{\{M_p\}}_{K,(r_j)}$ is the $(B)$-space of all $C^{\infty}$-functions supported by $K$ for which the norm $\|\cdot\|_{K,(r_j)}$ is finite. Furthermore, for $U$ open and $r>0$ (for $(r_j)\in\mathfrak{R}$) we denote $\ds \DD^{(M_p)}_{U,r}=\lim_{\substack{\longrightarrow\\ K\Subset U}}\DD^{\{M_p\},r}_{K}$ ($\ds \DD^{\{M_p\}}_{U,(r_j)}=\lim_{\substack{\longrightarrow\\ K\Subset U}}\DD^{\{M_p\}}_{K,(r_j)}$). Both spaces carry natural $(LB)$ topologies (but we shall not need this fact).

We will often make use of the following lemma by Komatsu (see \cite[p. 195]{Komatsu}). In the future we refer to it as the parametrix of Komatsu.

\begin{lemma}[\cite{Komatsu}]
Let $K$ be a compact neighborhood of zero, $r > 0$, and $(r_p)\in\mathfrak{R}$.
\begin{itemize}
\item[$(i)$] There are $u\in\DD^{\{M_p\}}_{K,r}$ and $\psi\in\DD^{(M_p)}_K$ such that $P(D)u=\delta+\psi$ where $P(D)$ is ultradifferential operator of  class $(M_p)$.
\item[$(ii)$] There are $u\in \DD^{\{M_p\}}_{K,(r_{j})}$ and $\psi\in\DD^{\{M_p\}}_K$ such that
    \beqs
    \frac{\left\|D^{\alpha} u\right\|_{L^{\infty}(K)}}{M_{\alpha} \prod_{j=1}^{|\alpha|}r_j}\rightarrow 0 \mbox{ as } |\alpha|\rightarrow\infty \mbox{ and } P(D)u=\delta+\psi,
    \eeqs
    where $P(D)$ is an ultradifferential operator of class $\{M_p\}$.
\end{itemize}
\end{lemma}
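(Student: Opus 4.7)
The plan is to carry out the classical parametrix construction due to Komatsu via Fourier inversion: first build a global ``fundamental solution'' $E$ for an appropriately chosen ultradifferential operator $P(D)$, then cut $E$ off by a test function equal to $1$ near the origin, and identify the commutator as the error term $\psi$.

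For part $(i)$ I would take
\[
P(\xi)=\prod_{p=1}^\infty\bigl(1+h^2|\xi|^2/m_p^2\bigr),\qquad m_p:=M_p/M_{p-1},
\]
with $h>0$ small, to be fixed in terms of $r$. Conditions $(M.1)$--$(M.3)$ ensure that $P$ is an entire ultrapolynomial of class $(M_p)$ and give a two-sided bound of the form $A\exp M(h|\xi|)\le |P(\xi)|\le B\exp M(2h|\xi|)$, so $1/P\in L^1(\RR^d)$ and $E:=\mathcal F^{-1}(1/P)$ is a bounded continuous function with $P(D)E=\delta$ as a tempered ultradistribution. From the inverse Fourier representation of $D^\alpha E$ and the elementary inequality $\rho^{p}\le M_p\exp M(\rho)$, the global bound $\|D^\alpha E\|_{L^\infty}\le C(r/2)^{|\alpha|}M_\alpha$ follows for suitably small $h$, showing $E\in\EE^{\{M_p\},r/2}(\RR^d)$; repeated integration by parts in $\xi$ using the identity $e^{ix\cdot\xi}=|x|^{-2k}(\sum_j x_j\partial_{\xi_j})^k e^{ix\cdot\xi}$ for $x\ne 0$ upgrades this, on any region $\{|x|\ge\delta\}$, to $(M_p)$-type estimates of the form $|D^\alpha E(x)|\le C_{\delta,L}L^{|\alpha|}M_\alpha$ for every $L>0$. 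Picking $\chi\in\DD^{(M_p)}_K$ with $\chi\equiv 1$ on a neighborhood $V_0$ of $0$ satisfying $\overline{V_0}\subset\mathrm{int}\,K$, I would set $u:=\chi E$ and $\psi:=P(D)u-\delta$. Since $\chi(0)=1$ gives $\chi\cdot\delta=\delta$, the Leibniz expansion produces $\psi=[P(D),\chi]E=\sum_\alpha c_\alpha\sum_{0<\beta\le\alpha}\binom{\alpha}{\beta}(D^\beta\chi)(D^{\alpha-\beta}E)$; because $D^\beta\chi$ vanishes on $V_0$ for $\beta\ne 0$, the support of $\psi$ lies in $\mathrm{supp}\,\chi\setminus V_0\Subset K\setminus\{0\}$, and combining the $(M_p)$-bounds on $\chi$, on the coefficients $c_\alpha$, and on $E$ restricted to $\mathrm{supp}\,\psi$ makes the double series converge absolutely in $(M_p)$-norms, so $\psi\in\DD^{(M_p)}_K$. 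The global bound on $D^\alpha E$ together with Leibniz and the $(M_p)$-character of $\chi$ yields $u\in\DD^{\{M_p\}}_{K,r}$, after absorbing the Leibniz losses into a final readjustment of $h$.

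For part $(ii)$ I would replace $P$ by $P(\xi)=\prod_{p=1}^\infty\bigl(1+|\xi|^2/(m_p R_p)^2\bigr)$, which is now an ultrapolynomial of class $\{M_p\}$, and run the same argument with $(M_p)$-type estimates replaced by bounds involving $N_{r_p}$. The required little-$o$ decay of $\|D^\alpha u\|_{L^\infty(K)}/(M_\alpha R_\alpha)$ emerges from the inequality $\rho^p\le M_pR_p\exp N_{r_p}(\rho)$ upon passing to an auxiliary sequence $(\tilde r_p)\in\mathfrak R$ with $\tilde r_p\le r_p$ eventually and $R_\alpha/\tilde R_\alpha\to\infty$; such a sequence always exists by a standard diagonal selection in $\mathfrak R$, and it yields $\|D^\alpha E\|_{L^\infty}\le C M_\alpha\tilde R_\alpha$, hence the little-$o$ statement after dividing by $M_\alpha R_\alpha$. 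The main technical obstacle throughout is the simultaneous bookkeeping of parameters---$h$ and the cutoff $\chi$ in $(i)$, and the two paired sequences $(r_p),(\tilde r_p)$ in $(ii)$---so that all estimates land precisely in the Banach spaces named in the statement; in $(ii)$ the extra subtlety is to keep $P(D)$ genuinely of class $\{M_p\}$ while still extracting the asserted little-$o$ decay, which is what forces the passage through the auxiliary sequence.
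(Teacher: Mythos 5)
The paper does not prove this lemma: it is quoted from Komatsu's paper on microlocal analysis in Gevrey classes, and your sketch is precisely the construction underlying that citation (elliptic ultrapolynomial $P$, fundamental solution $E=\mathcal{F}^{-1}(1/P)$, cutoff $u=\chi E$, error term $\psi=[P(D),\chi]E$). Two points deserve care. First, a direction slip in the parameter: with $P(\xi)=\prod_p\bigl(1+h^2|\xi|^2/m_p^2\bigr)$ one has $|P(\xi)|\geq e^{2M(h|\xi|)}$ and hence $\|D^{\alpha}E\|_{L^{\infty}}\leq CM_{\alpha}h^{-|\alpha|}$, so to land in $\DD^{\{M_p\}}_{K,r}$ you must take $h$ \emph{large} (of order $1/r$), not small. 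This is harmless, because membership of $P(D)$ in the class $(M_p)$ only requires $|c_{\alpha}|\leq CL^{|\alpha|}/M_{\alpha}$ for \emph{some} $L$, which tolerates large $h$. Second --- and this is where essentially all of the work in the cited proof lies --- the global bounds on $D^{\alpha}E$ are genuinely insufficient to sum the commutator series: a generic term of $\psi$ is controlled by $|c_{\alpha}|\,\|D^{\alpha-\beta}E\|_{L^{\infty}}\,|D^{\beta}\chi|$, and since the coefficient constant of $P$ scales like $L_1\asymp h$, the ratio $(L_1/h)^{|\alpha|}$ is a constant $\geq 1$ raised to $|\alpha|$ and the sum over $\alpha$ diverges. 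You correctly route the estimate for $\psi$ through the improved interior bounds $|D^{\gamma}E(x)|\leq C_{\delta,L}L^{|\gamma|}M_{\gamma}$ on $\{|x|\geq\delta\}$, but these interior estimates (obtained by the integration by parts you indicate, which in turn requires uniform control of arbitrarily high $\xi$-derivatives of $1/P$) are the technical heart of the lemma and cannot be waved through; the same remark applies to part $(ii)$, where even the Roumieu membership $\psi\in\DD^{\{M_p\}}_K$ fails if one uses only the global bound $\|D^{\alpha}E\|_{L^{\infty}}\leq CM_{\alpha}\tilde{R}_{\alpha}$. Granting those interior estimates, your auxiliary-sequence device for $(ii)$ is sound (for instance $\tilde{r}_p=\sqrt{r_p}$ already gives $\tilde{R}_{\alpha}/R_{\alpha}\to 0$ while keeping $P(D)$ of class $\{M_p\}$), and the remaining bookkeeping is routine.
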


We denote as $\SSS^{\{M_{p}\},m}_{\infty} \left(\RR^d\right)$, $m > 0$, the $(B)$-space of all $\varphi\in C^{\infty}\left(\RR^d\right)$ which satisfy
\beq\label{pcnnk}
\sigma_{m}(\varphi): =\sup_{\alpha\in\NN^d}\frac{m^{|\alpha|}\left\|e^{M(m|\cdot|)}D^{\alpha}\varphi\right\|_{L^{\infty}}}{M_{\alpha}}<\infty,
\eeq
supplied with the norm $\sigma _{m}$. The spaces $\SSS'^{(M_{p})}(\RR^d)$ and $\SSS'^{\{M_{p}\}}(\RR^d)$ of tempered ultradistributions of Beurling and Roumieu type are defined as the strong duals of $\ds\SSS^{(M_{p})}(\RR^d)=\lim_{\substack{\longleftarrow\\ m\rightarrow\infty}}\SSS^{\{M_{p}\},m}_{\infty}\left(\RR^d\right)$ and $\ds\SSS^{\{M_{p}\}}(\RR^d)=\lim_{\substack{\longrightarrow\\ m\rightarrow 0}}\SSS^{\{M_{p}\},m}_{\infty}\left(\RR^d\right)$ respectively. For the properties of these spaces, we refer to \cite{PilipovicK,PilipovicU,Pilipovic}. It is proved in \cite[p. 34]{PilipovicK} and \cite[Lem. 4]{Pilipovic} that $\ds\SSS^{\{M_{p}\}}(\RR^d) = \lim_{\substack{\longleftarrow\\ (r_{i}), (s_{j}) \in \mathfrak{R}}}\SSS^{M_{p}}_{(r_{p}),(s_{q})}(\RR^d)$, where $\ds\SSS^{M_{p}}_{(r_{p}),(s_{q})}(\RR^d)=\left\{\varphi \in C^{\infty} \left(\RR^d\right)|\|\varphi\|_{(r_{p}),(s_{q})}<\infty\right\}$ and $\ds\|\varphi\|_{(r_{p}),(s_{q})} =\sup_{\alpha\in \NN^d}\frac{\left\|e^{N_{s_p}(|\cdot|)}D^{\alpha}\varphi\right\|_{L^{\infty}}} {M_{\alpha}\prod^{|\alpha|}_{p=1}r_{p}}$.

We denote as $\mathcal{O}'^*_C(\RR^d)$ the space of convolutors of $\SSS^*(\RR^d)$, that is, the subspace of all $f\in\SSS'^*(\RR^d)$ such that $f*\varphi\in\SSS^*(\RR^d)$ for all $\varphi\in\SSS^*(\RR^d)$ and the mapping $\varphi\mapsto f*\varphi$, $\SSS^*(\RR^d)\rightarrow\SSS^*(\RR^d)$ is continuous. We refer to \cite{PBD} for its properties.

Finally, we need the following technical result \cite[Lem. 2.4]{BojanL}. See \cite[p. 53]{Komatsu1} for the definition of subordinate function.

\begin{lemma}[\cite{BojanL}]\label{ppp}
Let $g:[0,\infty)\rightarrow[0,\infty)$ be an increasing function that satisfies the following estimate: for every $L>0$ there exists $C>0$ such that $g(\rho)\leq M(L\rho)+ \ln C$. Then, there exists subordinate function $\epsilon(\rho)$ such that $g(\rho)\leq M(\epsilon(\rho))+ \ln C'$, for some constant $C'>1$.
\end{lemma}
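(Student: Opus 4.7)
The plan is to construct $\epsilon$ explicitly by gluing together linear pieces $\rho\mapsto\rho/n$ on a suitable partition of $[0,\infty)$ and then taking a running supremum to ensure monotonicity. Let $H,c_0\ge 1$ be the constants of~$(M.2)$. A standard manipulation of $(M.2)$ (applied with $q=p$ to the sequence $M_{2p}$) yields $2M(\sigma)\le M(H\sigma)+\ln c_0$ for every $\sigma\ge 0$. Apply the hypothesis with $L=1/(Hn)$ for each integer $n\ge 1$ to obtain constants $C_n\ge 1$, which after replacing $C_n$ by $\max_{k\le n}C_k$ may be assumed non-decreasing, with $g(\rho)\le M(\rho/(Hn))+\ln C_n$. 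With $R_n:=\inf\{R\ge 0:M(R)\ge\ln C_n\}$, for $\rho\ge HnR_n$ one has $M(\rho/(Hn))\ge\ln C_n$, and then
\[ g(\rho)\le 2M(\rho/(Hn))\le M(\rho/n)+\ln c_0, \]
which is the key uniform estimate: the variable constant $\ln C_n$ has been absorbed into $M$ at the uniform cost $\ln c_0$.

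Choose $\rho_n\ge HnR_n$ strictly increasing to infinity with $\rho_{n+1}/\rho_n\ge n/(n-1)$ for $n\ge 2$, put $\epsilon_0(\sigma):=\sigma$ on $[0,\rho_1)$ and $\epsilon_0(\sigma):=\sigma/n$ on $[\rho_n,\rho_{n+1})$, and define
\[ \epsilon(\rho):=\sup_{0\le\sigma\le\rho}\epsilon_0(\sigma). \]
The growth condition on $(\rho_n)$ implies that on each interval $[\rho_n,\rho_{n+1}]$ with $n\ge 2$ the function $\epsilon$ equals the constant $\rho_n/(n-1)$ on the sub-interval $[\rho_n,n\rho_n/(n-1)]$ and equals $\rho/n$ on $[n\rho_n/(n-1),\rho_{n+1}]$; in particular $\epsilon$ is continuous, non-decreasing, and satisfies $\epsilon(\rho)\ge\rho/n$ everywhere on that interval.

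To verify the conclusion, combine the key estimate with $\epsilon(\rho)\ge\rho/n$ on $[\rho_n,\rho_{n+1}]$ to obtain $g(\rho)\le M(\epsilon(\rho))+\ln c_0$. On $[0,\rho_1]$ one has $\epsilon(\rho)=\rho\ge\rho/H$, and the hypothesis at $L=1/H$ yields $g(\rho)\le M(\rho/H)+\ln C_1\le M(\epsilon(\rho))+\ln C_1$. Setting $C':=\max(c_0,C_1)$ gives the desired inequality globally, while $\epsilon(\rho)/\rho\le 1/(n-1)$ on $[\rho_n,\rho_{n+1}]$ forces $\rho/\epsilon(\rho)\to\infty$ as $\rho\to\infty$, so $\epsilon$ is indeed a subordinate function.

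The main obstacle is that the constants $\ln C_n$ produced by the hypothesis may grow arbitrarily fast with $n$, so they cannot be absorbed into a single additive constant on $M$ without some mechanism. The decisive trick is the shift $\rho/n\mapsto\rho/(Hn)$ in the hypothesis, which together with the $(M.2)$-inequality $2M(\sigma)\le M(H\sigma)+\ln c_0$ converts the variable quantity $\ln C_n$ into additional growth of $M$ at the uniform cost $\ln c_0$. A secondary and purely technical issue is to make $\epsilon$ simultaneously continuous, non-decreasing, and subordinate; this is resolved by the running-supremum construction under the mild growth condition $\rho_{n+1}/\rho_n\ge n/(n-1)$.
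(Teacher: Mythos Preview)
The paper does not supply its own proof of this lemma; it is quoted verbatim from \cite{BojanL} (Prangoski, \emph{Laplace transform in spaces of ultradistributions}), so there is nothing in the present article to compare your argument against. That said, your proof is correct and self-contained. The essential mechanism---using the $(M.2)$-consequence $2M(\sigma)\le M(H\sigma)+\ln c_0$ to trade the unbounded constants $\ln C_n$ for a single uniform constant $\ln c_0$ at the price of a factor $H$ in the argument of $M$---is exactly the right idea, and your piecewise-linear/running-supremum construction of $\epsilon$ is a clean way to assemble the resulting estimates into a single subordinate function. One small remark: your verification that $\epsilon(\rho)/\rho\le 1/(n-1)$ on $[\rho_n,\rho_{n+1})$ tacitly uses that $\sup_{[0,\rho_n)}\epsilon_0=\rho_n/(n-1)$; this follows because the growth condition $\rho_{k+1}/\rho_k\ge k/(k-1)$ makes the sequence $\rho_m/(m-1)$ non-decreasing in $m\ge 2$, so earlier intervals never contribute a larger value to the running supremum. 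You might state this explicitly. Note also that Komatsu's definition of a subordinate function (\cite[p.~53]{Komatsu1}) requires only that $\epsilon$ be monotone increasing with $\epsilon(\rho)/\rho\to 0$, so no separate argument that $\epsilon(\rho)\to\infty$ is needed.
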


\section{On the space of ultratempered convolutors}\label{sekcijakonvolutori}

Our goal in this section is to construct a test function space whose dual is algebraically isomorphic to $\mathcal{O}'^{\ast}_{C}(\RR^d)$. (We refer to \cite{PBD} for properties of the latter space.) We start with an important characterization of tempered ultradistributions in terms of growth properties of convolution averages, an analogue to this result for $\mathcal{S}'(\RR^d)$ was obtained long ago by Schwartz (cf. \cite[Thm. VI, p. 239]{S})

\begin{pro}\label{S'}
Let $f\in \mathcal{D}'^*(\mathbb{R}^d)$. Then, $f$ belongs to $\mathcal{S}'^*(\mathbb{R}^d)$ if and only if there exists $\lambda>0$ (there exists $(l_p)\in\mathfrak{R}$) such that for every $\varphi \in \mathcal{D}^*(\mathbb{R}^d)$
\begin{equation}
\label{eqgrowthMp}
\sup_{x\in\mathbb{R}^{d}} e^{-M(\lambda |x|)} |(f*\varphi)(x)|<\infty\,\,\,\,\, \left(\sup_{x\in\mathbb{R}^{d}} e^{-N_{l_p}(|x|)} |(f*\varphi)(x)|<\infty\right).
\end{equation}
\end{pro}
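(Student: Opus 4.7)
The plan is to prove the two implications separately: necessity by exploiting the continuity of $f$ on $\SSS^{*}(\RR^d)$, and sufficiency by means of Komatsu's parametrix combined with a closed graph and mollification argument.

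\emph{Necessity.} Assume $f\in\SSS'^{*}(\RR^d)$. Continuity of $f$ on $\SSS^{*}(\RR^d)$ yields a seminorm estimate: in the Beurling case, constants $m,C>0$ with $|\langle f,\psi\rangle|\le C\sigma_m(\psi)$, and in the Roumieu case, sequences $(r_p),(s_q)\in\mathfrak{R}$ and a constant $C>0$ with $|\langle f,\psi\rangle|\le C\|\psi\|_{(r_p),(s_q)}$. For $\varphi\in\DD^{*}(\RR^d)$ one has $(f*\varphi)(x)=\langle f,\varphi(x-\cdot)\rangle$, so I would estimate the corresponding seminorm of the translate $\varphi(x-\cdot)$. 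The subadditivity $M(\rho_1+\rho_2)\le M(2\rho_1)+M(2\rho_2)+\ln 2$ (a consequence of $(M.2)$), together with the rescaling identity $N_{s_p}(2\rho)=N_{s_p/2}(\rho)$, allows me to split the weight and pull out $e^{M(2m|x|)}$ (resp.\ $e^{N_{l_p}(|x|)}$, with $(l_p)$ built from $(s_p)$ by halving and correcting finitely many initial terms so that $(l_p)\in\mathfrak{R}$); the remaining factor is finite because $\varphi$ has compact support, giving \eqref{eqgrowthMp} with $\lambda=2m$ (resp.\ with this $(l_p)$).

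\emph{Sufficiency.} Assume \eqref{eqgrowthMp} for every $\varphi\in\DD^{*}(\RR^d)$. First, I would apply the closed graph theorem to the linear map $\varphi\mapsto e^{-M(\lambda|\cdot|)}(f*\varphi)$ (resp.\ with weight $e^{-N_{l_p}(|\cdot|)}$) from $\DD^{*}(\RR^d)$ into $L^\infty(\RR^d)$: this upgrades the pointwise bound into a continuous estimate, namely, for every compact $K\Subset\RR^d$ there exist $h>0$ (resp.\ $(q_j)\in\mathfrak{R}$) and $C>0$ such that $\|e^{-M(\lambda|\cdot|)}(f*\varphi)\|_{L^\infty}\le C\,p_{K,h}(\varphi)$ on $\DD^{(M_p)}_K$, and analogously with $\|\varphi\|_{K,(q_j)}$ in the Roumieu case. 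Then I would fix a compact neighbourhood $K$ of the origin and invoke Komatsu's parametrix with parameter $r$ strictly smaller than $h$ (resp.\ with $(r_j)$ eventually dominated by $(q_j)/2$), producing $u,\psi$ and an ultradifferential operator $P(D)$ of class $*$ with $P(D)u=\delta+\psi$, so that $f=P(D)(f*u)-f*\psi$.

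The hypothesis applies to $\psi$ directly. For $u$, I argue by density: mollifying with $\rho\in\DD^{(M_p)}$ gives $u_n:=u*\rho_{1/n}\in\DD^{(M_p)}_{K'}$ (for $K'\Supset K$ absorbing the mollifier support), and the strict gap between the parametrix parameter and the closed-graph parameter forces $u_n\to u$ in the $p_{K',h}$-norm (resp.\ $\|\cdot\|_{K',(q_j)}$): the tail is controlled uniformly in $n$ by $2p_{K,r}(u)(r/h)^{|\alpha|}$ (resp.\ $2\|u\|_{K,(r_j)}R_\alpha/Q_\alpha$), which vanishes as $|\alpha|\to\infty$, whereas each fixed $\alpha$-term goes to $0$ by standard mollification. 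Passing to the limit in the upgraded estimate yields $|(f*u)(x)|\le Ce^{M(\lambda|x|)}$ (resp.\ $\le Ce^{N_{l_p}(|x|)}$). Continuous functions with such ultrapolynomial growth belong to $\SSS'^{*}(\RR^d)$, since $\SSS^{*}$-test functions decay fast enough (cf.\ \eqref{pcnnk}) to absorb the weight upon integration. As $P(D)$ acts continuously on $\SSS'^{*}(\RR^d)$, I conclude $f\in\SSS'^{*}(\RR^d)$.

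The main obstacle is the transfer of the growth hypothesis from $\DD^{*}$ to the Komatsu parametrix $u$, which lies in a strictly larger Banach space than the native test function space. The upgraded estimate from the closed graph, tuned together with a mollification whose parameters are strictly less than those of the closed graph, is what resolves this mismatch and makes the argument work uniformly in both the Beurling and Roumieu situations.
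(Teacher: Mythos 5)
Your proof is correct and follows essentially the same route as the paper: upgrade the pointwise bound \eqref{eqgrowthMp} to a quantitative estimate by a Baire-category-type principle, transfer it to the Komatsu parametrix function $u$ by mollification (using the strict gap between the parametrix parameter and the continuity parameter to control the tail uniformly), and conclude from $f=P(D)(u*f)+\chi*f$. The only cosmetic difference is that you invoke the closed graph theorem for $\varphi\mapsto e^{-M(\lambda|\cdot|)}(f*\varphi)$ into $L^{\infty}$, whereas the paper obtains the same estimate via equicontinuity (barrelledness of $\mathcal{D}^{*}_{K}$) of the family $\{f*\phi\}$ over the unit ball of the weighted $L^{1}$-space.
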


\begin{proof} Observe that if $f\in\SSS'^*\left(\RR^d\right)$ then (\ref{eqgrowthMp}) obviously holds (one just needs to apply the representation theorem for the elements of $\SSS'^*\left(\RR^d\right)$, see \cite[Thm. 2.6.1, p. 38]{PilipovicK}). We prove the converse part only in the ${\{M_p\}}$ case; the $(M_p)$ case is similar. Let $\Omega$ be an open bounded subset of $\mathbb{R}^d$ which contains $0$ and is symmetric (i.e., $-\Omega=\Omega$) and denote $\overline{\Omega}=K$. Let $B_1$ be the unit ball in the weighted $(B)$-space $L^1_{\exp(N_{l_p}(|\: \cdot \:|))}$. Fix $\varphi\in \DD^{\{M_p\}}_K$. For every $\phi \in B_1\cap \mathcal{D}^{\{M_p\}}(\RR^d)$, (\ref{eqgrowthMp}) implies $|\langle f*\phi,\varphi\rangle|= |\langle f*\check{\varphi},\check{\phi}\rangle|\leq \left\|e^{-N_{l_p}(|\:\cdot\:|)}(f*\check{\varphi})\right\|_{L^{\infty}} \|\phi\|_{L^1_{\exp(N_{l_p}(|\cdot |))}}\leq C_{\varphi}$. We obtain $\left\{f*\phi|\, \phi\in B_1\cap \mathcal{D}^{\{M_p\}}(\RR^d)\right\}$ is weakly bounded and, hence, equicontinuous in $\DD'^{\{M_p\}}_K$ ($\DD^{\{M_p\}}_K$ is barreled). Hence, there exist $(k_p)\in \mathcal{R}$ and $\varepsilon>0$ such that $|\langle f*\psi,\check{\phi}\rangle|\leq 1$ for all $\psi\in V_{k_p}(\varepsilon)=\{\eta\in \mathcal{D}^{\{M_p\}}_K|\,\|\eta\|_{K,k_p}\leq \varepsilon\}$ and $\phi\in B_1\cap \mathcal{D}^{\{M_p\}}(\mathbb{R}^d)$.

 Let $r_p=k_{p-1}/H$, for $p\in\NN$, $p\geq 2$ and set $r_1=\min\{1,r_2\}$. Then $(r_p)\in \mathfrak{R}$. Let $\psi\in \mathcal{D}^{\{M_p\}}_{\Omega,(r_p)}$ and choose $C_{\psi}$ such that $\|\psi/C_{\psi}\|_{K,(r_p)}\leq \varepsilon/2$. Let $\delta_1\in\DD^{\{M_p\}}\left(\RR^d\right)$ such that $\delta_1\geq 0$, $\mathrm{supp}\, \delta_1\subseteq\{x\in\RR^d|\, |x|\leq 1\}$ and $\int_{\RR^d} \delta_1(x)dx=1$. Set $\delta_j(x)=j^d \delta_1(jx)$, for $j\in\NN$, $j\geq 2$. Observe that for $j$ large enough $\psi*\delta_j\in\DD^{\{M_p\}}_K$. Also
\beqs
|\partial^\alpha((\psi*\delta_j)(x)-\psi(x))|\leq \int_{\mathbb{R}^d}|\partial^\alpha(\psi(x-t)-\psi(x))|\delta_j(t)dt.
\eeqs
Using the Taylor expansion of the function $\partial^\alpha\psi$ at the point $x-t$, we obtain
\beqs
\left|\partial^\alpha\left(\psi(x)-\psi(x-t)\right)\right|\leq\sum_{|\beta|=1}\left|t^{\beta}\right| \int_0^1\left|\partial^{\alpha+\beta}\psi(sx+(1-s)(x-t))\right|ds\leq C |t| M_{|\alpha|+1}\prod_{i=1}^{|\alpha|+1}{r_i}.
\eeqs
So, for $j$ large enough, keeping in mind the definition of $(r_p)$ and by using $(M.2)$ for $M_p$ we have
\beqs
|\partial^\alpha((\psi*\delta_j)(x)-\psi(x))|&\leq& \frac{C'_1}{j} M_{\alpha+1}\prod_{i=2}^{|\alpha|+1}{r_i} \int_{\mathrm{supp}\,\delta_j}\delta_j(t) dt\\
&\leq& \frac{C''_1}{j} H^{|\alpha|+1}M_{\alpha}\prod_{i=1}^{|\alpha|}(k_i/H)\leq \frac{C_1}{j}M_{\alpha}\prod_{i=1}^{|\alpha|}k_i.
\eeqs
Hence $C_{\psi}^{-1}\psi*\delta_j\in V_{(k_p)}(\varepsilon)$ for all large enough $j$. We obtain $\left|\langle f*(\psi*\delta_j),\phi\rangle\right|\leq C_{\psi}$ and after passing to the limit $\left|\langle f*\psi,\phi\rangle\right|\leq C_{\psi}$. From the arbitrariness of $\psi$ we have that for every $\psi\in \mathcal{D}^{\{M_p\}}_{\Omega,(r_p)}$ there exists $C_{\psi}>0$ such that $\left|\langle f*\psi,\phi\rangle\right|\leq C_{\psi} \|\phi\|_{L^1_{\exp(N_{l_p}(|x|))}}$, for all $\phi\in \DD^{\{M_p\}}\left(\RR^d\right)$. The density of $\mathcal{D}^{\{M_p\}}(\mathbb{R}^d)$ in $L^1_{\exp(N_{l_p}(|\cdot|))}$ implies that for every fixed $\psi\in \mathcal{D}^{\{M_p\}}_{\Omega,(r_p)}$, $f*\psi$ is a continuous functional on $L^1_{\exp(N_{l_p}(|\cdot|))}$; hence, $\|\exp(-N_{l_p}(|\cdot|))(f*\psi)\|_{L^{\infty}}\leq C_{2,\psi}$. From the parametrix of Komatsu, for the sequence $(r_p)$ there are $u\in \mathcal{D}^{\{M_p\}}_{\Omega,(r_p)}$, $\chi \in \mathcal{D}^{\{M_p\}}(\Omega)$ and an ultradifferential operator of $\{M_p\}$ type such that $f=P(D)(u*f)+\chi*f$. Thus $f\in\mathcal{S}'^*(\mathbb{R}^d)$.
\end{proof}

Our next concern is to define the test function spaces $\mathcal{O}_C^*(\RR^d)$ corresponding to the spaces $\mathcal{O}_C'^*(\RR^d)$. We first define for every $m,h>0$ the $(B)$-spaces $$\mathcal{O}^{M_p}_{C,m,h}(\RR^d)=\left\{\varphi\in C^\infty\left(\RR^d\right)\bigg|\,\|\varphi\|_{m,h}=\left(\sum_{\alpha\in \mathbb{N}^d}\frac{m^{2|\alpha|}}{M_{\alpha}^2}\left\|D^{\alpha}\varphi e^{-M(h|\cdot|)}\right\|^2_{L^2}\right)^{1/2}<\infty\right\}.$$ Observe that for $m_1\leq m_2$ we have the continuous inclusion $\mathcal{O}^{M_p}_{C,m_2,h}(\RR^d)\rightarrow\mathcal{O}^{M_p}_{C,m_1,h}(\RR^d)$ and for $h_1\leq h_2$ the inclusion $\mathcal{O}^{M_p}_{C,m,h_1}(\RR^d)\rightarrow\mathcal{O}^{M_p}_{C,m,h_2}(\RR^d)$ is also continuous. As locally convex spaces (l.c.s.) we define
\beqs
\mathcal{O}_{C,h}^{(M_p)}(\RR^d)=\lim_{\substack{\longleftarrow\\ m\rightarrow \infty}}\mathcal{O}^{M_p}_{C,m,h}(\RR^d)&,&\,\, \mathcal{O}_C^{(M_p)}(\RR^d)=\lim_{\substack{\longrightarrow\\ h\rightarrow \infty}}\mathcal{O}^{(M_p)}_{C,h}(\RR^d);\\
\mathcal{O}_{C,h}^{\{M_p\}}(\RR^d)=\lim_{\substack{\longrightarrow\\ m\rightarrow 0}}\mathcal{O}^{M_p}_{C,m,h}(\RR^d)&,& \,\,\mathcal{O}_C^{\{M_p\}}(\RR^d)=\lim_{\substack{\longleftarrow\\ h\rightarrow 0}}\mathcal{O}^{\{M_p\}}_{C,h}(\RR^d).
\eeqs
Note that $\mathcal{O}_{C,h}^{(M_p)}(\RR^d)$ is an $(F)$-space and since all inclusions $\mathcal{O}_{C,h}^{(M_p)}(\RR^d)\rightarrow\EE^{(M_p)}(\RR^d)$ are continuous (by the Sobolev imbedding theorem), $\mathcal{O}_C^{(M_p)}(\RR^d)$ is indeed a (Hausdorff) l.c.s.. Moreover, as an inductive limit of barreled and bornological spaces, $\mathcal{O}_C^{(M_p)}(\RR^d)$ is barreled and bornological as well. Also $\mathcal{O}_{C,h}^{\{M_p\}}(\RR^d)$ is a (Hausdorff) l.c.s., since all inclusions $\mathcal{O}^{M_p}_{C,m,h}(\RR^d)\rightarrow\EE^{\{M_p\}}(\RR^d)$ are continuous (again by the Sobolev embedding theorem). Hence $\mathcal{O}_C^{\{M_p\}}(\RR^d)$ is indeed a (Hausdorff) l.c.s.. Moreover, $\mathcal{O}_{C,h}^{\{M_p\}}(\RR^d)$ is barreled and bornological $(DF)$-space, as the inductive limit of $(B)$-spaces. By these considerations it also follows that $\mathcal{O}_C^*(\RR^d)$ is continuously injected into $\EE^*(\RR^d)$. One easily verifies that for each $h>0$, $\SSS^{(M_p)}(\RR^d)$ is continuously injected into $\mathcal{O}_{C,h}^{(M_p)}(\RR^d)$ ($\SSS^{\{M_p\}}(\RR^d)$ is continuously injected into $\mathcal{O}_{C,h}^{\{M_p\}}(\RR^d)$). Moreover, one can also prove (by using cutoff functions) that for each $h>0$, $\DD^{(M_p)}(\RR^d)$ is sequentially dense in $\mathcal{O}_{C,h}^{(M_p)}(\RR^d)$, ($\DD^{\{M_p\}}(\RR^d)$ is sequentially dense in $\mathcal{O}_{C,h}^{\{M_p\}}(\RR^d)$). Hence, $\SSS^*(\RR^d)$ is continuously and densely injected into $\mathcal{O}_C^*(\RR^d)$. Consequently, the dual $\left(\mathcal{O}_C^*(\RR^d)\right)'$ can be regarded as vector subspace of $\SSS'^*(\RR^d)$.

 We will prove that the dual of $\mathcal{O}^*_C(\RR^d)$ is equal, as a set, to $\mathcal{O}'^*_C(\RR^d)$ (the general idea is similar to the one used by Komatsu \cite{Komatsu1}, p. 79). To do this, we need several additional spaces.

 For $m,h>0$ define

\beqs
Y_{m,h}&=&\Bigg\{(\psi_{\alpha})_{\alpha\in\NN^d}\bigg|\, e^{-M(h|\cdot|)}\psi_{\alpha}\in L^2(\RR^d),\\
&{}&\quad\left\|(\psi_{\alpha})_{\alpha}\right\|_{Y_{m,h}} =\left(\sum_{\alpha\in\NN^d}\frac{m^{2|\alpha|}\left\|e^{-M(h|\cdot|)}\psi_{\alpha}\right\|_{L^2}^2} {M_{\alpha}^2}\right)^{1/2}<\infty \Bigg\}.
\eeqs
One easily verifies that $Y_{m,h}$ is a $(B)$-space, with the norm $\|\cdot\|_{Y_{m,h}}$.\\

 Let $\tilde{U}$ be the disjoint union of countable number of copies of $\RR^d$, one for each $\alpha\in\NN^d$, i.e., $\tilde{U}=\bigsqcup_{\alpha\in\NN^d}\RR^d_{\alpha}$. Equip $\tilde{U}$ with the disjoint union topology. Then $\tilde{U}$ is a Hausdorff locally compact space. Moreover every open set in $\tilde{U}$ is $\sigma$-compact. On each $\RR^d_{\alpha}$ we define the Radon measure $\nu_{\alpha}$ by $d\nu_{\alpha}=e^{-2M(h|x|)}dx$. One can define a Borel measure $\mu_m$ on $\tilde{U}$ by $\ds\mu_m(E)=\sum_{\alpha}\frac{m^{2|\alpha|}}{M_{\alpha}^2}\nu_{\alpha}\left(E\cap \RR^d_{\alpha}\right)$, for $E$ a Borel subset of $\tilde{U}$. It is obviously locally finite, $\sigma$-finite and $\mu_m(\tilde{K})<\infty$ for every compact subset $\tilde{K}$ of $\tilde{U}$. By the properties of $\tilde{U}$ described above, $\mu_m$ is regular (both inner and outer regular). We obtain that $\mu_m$ is a Radon measure. To every $(\psi_{\alpha})_{\alpha}\in Y_{m,h}$ there corresponds an element $\chi\in L^2(\tilde{U},\mu_m)$ defined by $\chi_{|\RR^d_{\alpha}}=\psi_{\alpha}$. One easily verifies that the mapping $(\psi_{\alpha})_{\alpha}\mapsto \chi$, $Y_{m,h}\rightarrow L^2(\tilde{U},\mu_m)$ is an isometry, that is, $Y_{m,h}$ can be identified with $L^2(\tilde{U},\mu_m)$. Also, observe that $\mathcal{O}^{M_p}_{C,m,h}(\RR^d)$ can be identified with a closed subspace of $Y_{m,h}$ via the mapping $\varphi\mapsto ((-D)^{\alpha}\varphi)_{\alpha}$; hence, it is a reflexive space as a closed subspace of a reflexive $(B)$-space. We obtain that the linking mappings in $\ds \mathcal{O}_{C,h}^{(M_p)}(\RR^d)=\lim_{\substack{\longleftarrow\\ m\rightarrow \infty}}\mathcal{O}^{M_p}_{C,m,h}(\RR^d)$ and $\ds\mathcal{O}_{C,h}^{\{M_p\}}(\RR^d)=\lim_{\substack{\longrightarrow\\ m\rightarrow 0}}\mathcal{O}^{M_p}_{C,m,h}(\RR^d)$ are weakly compact, whence $\mathcal{O}_{C,h}^{(M_p)}(\mathbb{R}^{d})$ is an $(FS^*)$-space and $\mathcal{O}_{C,h}^{\{M_p\}}(\RR^d)$ is a $(DFS^*)$-space. In particular they are both reflexive and the inductive limit $\ds\mathcal{O}_{C,h}^{\{M_p\}}(\RR^d)=\lim_{\substack{\longrightarrow\\ m\rightarrow 0}}\mathcal{O}^{M_p}_{C,m,h}(\RR^d)$ is regular.

\begin{te}\label{1113} We have that 
$T\in\DD'^*(\RR^d)$ belongs to $\left(\mathcal{O}^*_C(\RR^d)\right)'$ if and only if
\begin{itemize}
\item[$(i)$] in the $(M_p)$ case, for every $h>0$ there exist $F_{\alpha,h}$, $\alpha\in\NN^d$, with $F_{\alpha,h}e^{M(h|\cdot|)}\in L^2(\RR^d)$, and $m>0$ such that

\beq\label{1115}
\sum_{\alpha}\frac{M_{\alpha}^2\left\|F_{\alpha,h}e^{M(h|\cdot|)}\right\|_{L^2}^2}{m^{2|\alpha|}}<\infty
\eeq
and the restriction of $T$ to $\mathcal{O}_{C,h}^{(M_p)}(\RR^d)$ is equal to $\sum_{\alpha} D^{\alpha}F_{\alpha,h}$, where the series is absolutely convergent in the strong dual of $\mathcal{O}_{C,h}^{(M_p)}(\RR^d)$;

\item[$(ii)$] in the $\{M_p\}$ case, there exist $h>0$ and $F_{\alpha,h}$, $\alpha\in\NN^d$, with $F_{\alpha,h}e^{M(h|\cdot|)}\in L^2(\RR^d)$, such that for every $m>0$ (\ref{1115}) holds and $T$ is equal to $\sum_{\alpha} D^{\alpha}F_{\alpha,h}$, where the series is absolutely convergent in the strong dual of $\mathcal{O}_{C}^{\{M_p\}}(\RR^d)$.
\end{itemize}
\end{te}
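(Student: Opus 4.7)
The strategy is to exploit the closed embedding $\iota:\varphi\mapsto((-D)^\alpha\varphi)_\alpha$ of each Banach space $\mathcal{O}^{M_p}_{C,m,h}(\RR^d)$ into $Y_{m,h}\cong L^2(\tilde{U},\mu_m)$, so that every continuous linear form on $\mathcal{O}^{M_p}_{C,m,h}$ is, by Hahn--Banach and the Riesz representation, of the shape $\varphi\mapsto\sum_\alpha\langle D^\alpha F_\alpha,\varphi\rangle$ with the norm bound (\ref{1115}); the remaining task is to pass this representation through the inductive/projective limits defining $\mathcal{O}_C^*$.

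\textbf{Sufficiency.} If $T=\sum_\alpha D^\alpha F_{\alpha,h}$ as in (i) or (ii), then Cauchy--Schwarz (applied pointwise inside each $L^2$ pairing and then across $\alpha$) yields, for every $\varphi\in\mathcal{O}^{M_p}_{C,m,h}$,
\[
|\langle T,\varphi\rangle|\le\Bigl(\sum_\alpha\tfrac{M_\alpha^2\|F_{\alpha,h}e^{M(h|\cdot|)}\|_{L^2}^2}{m^{2|\alpha|}}\Bigr)^{1/2}\|\varphi\|_{m,h}.
\]
Continuity of $T$ on $\mathcal{O}_C^*(\RR^d)$ is then immediate from the definitions: in the Beurling case one obtains continuity on $\mathcal{O}_{C,h}^{(M_p)}$ for each $h$ and passes to the inductive limit in $h$; in the Roumieu case one obtains continuity on the single $\mathcal{O}_{C,h}^{\{M_p\}}$ supplied by (ii) and composes with the canonical projection from the projective limit.

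\textbf{Necessity, Beurling case.} Fix $h>0$ and restrict $T$ to the Fréchet space $\mathcal{O}_{C,h}^{(M_p)}(\RR^d)$; continuity yields $m>0$ and $C>0$ with $|\langle T,\varphi\rangle|\le C\|\varphi\|_{m,h}$. Thus $T\circ\iota^{-1}$ is a well-defined continuous linear form on the closed subspace $\iota(\mathcal{O}^{M_p}_{C,m,h})\subset Y_{m,h}$; by Hahn--Banach it extends to all of $Y_{m,h}$, and the Riesz representation in $L^2(\tilde{U},\mu_m)$ delivers an element $(G_\alpha)_\alpha\in Y_{m,h}$. Unwinding the weights of $\mu_m$ and setting $F_{\alpha,h}$ proportional to $m^{2|\alpha|}M_\alpha^{-2}G_\alpha e^{-2M(h|\cdot|)}$ converts the $Y_{m,h}$-norm bound on $(G_\alpha)$ into (\ref{1115}), and the absolute convergence of $\sum_\alpha D^\alpha F_{\alpha,h}$ in the strong dual of $\mathcal{O}_{C,h}^{(M_p)}$ is immediate from this bound via the sufficiency estimate above.

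\textbf{Necessity, Roumieu case.} Because $\mathcal{O}_C^{\{M_p\}}$ is a reduced projective limit in $h$, any $T\in(\mathcal{O}_C^{\{M_p\}})'$ factors through some $\mathcal{O}_{C,h}^{\{M_p\}}$; the latter is the regular inductive limit, as $m\to 0$, of the reflexive spaces $\mathcal{O}^{M_p}_{C,m,h}$, so $T$ restricts to a continuous form on each $\mathcal{O}^{M_p}_{C,m,h}$, and the Beurling argument supplies, for each $m>0$, a family $(F_\alpha^{(m)})$ representing $T$ there together with the corresponding $L^2$-bound. The main obstacle is that these families need not be mutually compatible as $m$ varies, so a single $(F_{\alpha,h})$ valid simultaneously for all $m>0$ is not directly available from these choices. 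To overcome this I plan to apply the dual Mittag--Leffler lemma to the projective system $\{(\mathcal{O}^{M_p}_{C,m,h})'_b\}_{m>0}$: because the linking maps $\mathcal{O}^{M_p}_{C,m',h}\to\mathcal{O}^{M_p}_{C,m,h}$ ($m<m'$) are weakly compact (as part of the $(DFS^*)$-structure established just before the theorem), the transpose maps of this projective system satisfy the density hypothesis required by dual Mittag--Leffler. The lemma then collapses the countable sequence of Riesz representatives (taken along $m_j\to 0$) into a single coherent family $(F_{\alpha,h})_\alpha$ that verifies (\ref{1115}) for every $m>0$ and represents $T$ as $\sum_\alpha D^\alpha F_{\alpha,h}$ with absolute convergence in the strong dual of $\mathcal{O}_C^{\{M_p\}}$. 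This patching step is the technical heart of the Roumieu direction and is where all the difficulty lies.
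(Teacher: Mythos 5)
Your proposal follows essentially the same route as the paper: Hahn--Banach plus the Riesz representation in $Y_{m,h}\cong L^2(\tilde{U},\mu_m)$ for the Beurling case (and for each fixed $m$ in the Roumieu case), and then the dual Mittag--Leffler lemma, exploiting the weak compactness of the linking maps, to produce a single coherent representing family valid for all $m$ in the Roumieu case. The only detail you leave implicit is the precise setup the paper uses to invoke that lemma, namely the injective inductive sequence of short exact sequences $0\to X_{m,h}\to Y_{m,h}\to Y_{m,h}/X_{m,h}\to 0$ together with the compatibility $X_{m_1,h}\cap Y_{m_2,h}=X_{m_2,h}$, which is what yields the surjectivity of the restriction map from $\bigl(\lim_{\longrightarrow}Y_{m,h}\bigr)'$ onto $\bigl(\lim_{\longrightarrow}X_{m,h}\bigr)'$ and hence the coherent family you seek.
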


\begin{proof} We will consider first the Beurling case. Let $T\in \left(\mathcal{O}^{(M_p)}_C(\RR^d)\right)'$ and let $h>0$ be arbitrary but fixed. Denote by $T_h$ the restriction of $T$ on $\mathcal{O}_{C,h}^{(M_p)}(\RR^d)$. By the definition of the projective limit topology, it follows that there exists $m>0$ such that $T_h$ can be extended to a continuous linear functional on $\mathcal{O}^{M_p}_{C,m,h}(\RR^d)$. Denote this extension by $T_{h,1}$. Extend $T_{h,1}$, by the Hahn-Banach theorem, to a continuous linear functional $T_{h,2}$ on $Y_{m,h}$. Since $Y_{m,h}$ is isometric to $L^2(\tilde{U},\mu_m)$, there exists $g\in L^2(\tilde{U},\mu_m)$ such that $\ds T_{2,h}\left((\psi_{\alpha})_{\alpha}\right)=\int_{\tilde{U}}(\psi_{\alpha})_{\alpha} g d\mu_m$. Let $\ds F_{\alpha,h}=\frac{m^{2|\alpha|}}{M_{\alpha}^2}g_{|\RR^d_{\alpha}}e^{-2M(h|\cdot|)}$, $\alpha\in\NN^d$. Then, obviously $e^{M(h|\cdot|)}F_{\alpha,h}\in L^2(\RR^d)$ and $\ds \sum_{\alpha}\frac{M_{\alpha}^2\left\|F_{\alpha,h}e^{M(h|\cdot|)}\right\|_{L^2}^2}{m^{2|\alpha|}}= \|g\|^2_{L^2(\tilde{U},\mu_m)} <\infty$. For $\varphi\in\mathcal{O}^{(M_p)}_{C,h}(\RR^d)$,
\beqs
\langle T,\varphi\rangle=T_{h,2}\left(((-D)^{\alpha}\varphi)_{\alpha}\right)= \sum_{\alpha}\int_{\RR^d}F_{\alpha,h}(x)(-D)^{\alpha}\varphi(x) dx=\sum_{\alpha}\langle D^{\alpha}F_{\alpha,h},\varphi\rangle.
\eeqs
Moreover, one easily verifies that the series $\sum_{\alpha} D^{\alpha}F_{\alpha,h}$ is absolutely convergent in the strong dual of $\mathcal{O}_{C,h}^{(M_p)}(\RR^d)$.

Conversely, let $T\in \DD'^{(M_p)}(\RR^d)$ be as in $(i)$. Let $h>0$ be arbitrary but fixed. One easily verifies that $T$ is continuous functional on $\DD^{(M_p)}(\RR^d)$ supplied with the topology induced by $\mathcal{O}_{C,h}^{(M_p)}(\RR^d)$. Since $\DD^{(M_p)}(\RR^d)$ is dense in $\mathcal{O}_{C,h}^{(M_p)}(\RR^d)$ we obtain the conclusion in $(i)$.

 Next, we consider the Roumieu case. Let $T\in \left(\mathcal{O}^{\{M_p\}}_C(\RR^d)\right)'$. By the definition of the projective limit topology it follows that there exists $h>0$ such that $T$ can be extended to a continuous linear functional $T_1$ on $\mathcal{O}_{C,h}^{\{M_p\}}(\RR^d)$. For brevity in notation, set $X_{m,h}=\mathcal{O}^{M_p}_{C,m,h}(\RR^d)$ and $Z_{m,h}=Y_{m,h}/X_{m,h}$. Since the spaces $Y_{m,h}$ are reflexive, so are $X_{m,h}$ and $Z_{m,h}$ as closed subspaces and quotient spaces of reflexive $(B)$-spaces respectively. Moreover, observe that for $m_1<m_2$ we have $X_{m_1,h}\cap Y_{m_2,h}=X_{m_2,h}$. Hence we have the following injective inductive sequence of short topologically exact sequences of $(B)$-spaces:
\begin{center}
\begin{tikzpicture}[normal line/.style={->}]
\matrix (m) [matrix of math nodes, row sep=3em,
column sep=2.5em, text height=1.5ex, text depth=0.25ex]
{0 & X_{1,h} & Y_{1,h} & Z_{1,h} & 0 \\
0 & X_{1/2,h} & Y_{1/2,h} & Z_{1/2,h} & 0 \\
0 & X_{1/3,h} & Y_{1/3,h} & Z_{1/3,h} & 0\\
{} & \vdots & \vdots & \vdots & {} \\ };
\path[normal line]
(m-1-1) edge (m-1-2)
(m-1-2) edge (m-1-3)
        edge (m-2-2)
(m-1-3) edge (m-1-4)
        edge node [auto] {$\iota_{1,1/2}$} (m-2-3)
(m-1-4) edge (m-1-5)
        edge (m-2-4)

(m-2-1) edge (m-2-2)
(m-2-2) edge (m-2-3)
        edge (m-3-2)
(m-2-3) edge (m-2-4)
        edge node [auto] {$\iota_{1/2,1/3}$} (m-3-3)
(m-2-4) edge (m-2-5)
        edge (m-3-4)

(m-3-1) edge (m-3-2)
(m-3-2) edge (m-3-3)
        edge (m-4-2)
(m-3-3) edge (m-3-4)
        edge node [auto] {$\iota_{1/3,1/4}$} (m-4-3)
(m-3-4) edge (m-3-5)
        edge (m-4-4);
\end{tikzpicture}
\end{center}
where every vertical line is a weakly compact injective inductive sequence of $(B)$-spaces (since $X_{m,h}$, $Y_{m,h}$, $Z_{m,h}$ are reflexive $(B)$-spaces). The dual Mittag-Leffler lemma (see \cite[Lem. 1.4]{Komatsu1}) yields the short topologically exact sequence:
\beqs
0\longleftarrow\Big(\lim_{\substack{\longrightarrow\\ m\rightarrow 0}}X_{m,h}\Big)' \longleftarrow\Big(\lim_{\substack{\longrightarrow\\ m\rightarrow 0}}Y_{m,h}\Big)' \longleftarrow\Big(\lim_{\substack{\longrightarrow\\ m\rightarrow 0}}Z_{m,h}\Big)'\longleftarrow 0.
\eeqs
Since $(X_{m,h})_m$, $(Y_{m,h})_m$ and $(Z_{m,h})_m$ are weakly compact injective inductive sequences, hence regular, we have the following isomorphisms of l.c.s. $\ds\Big(\lim_{\substack{\longrightarrow\\ m\rightarrow 0}}X_{m,h}\Big)'=\lim_{\substack{\longleftarrow\\ m\rightarrow 0}}X'_{m,h}$, $\ds \Big(\lim_{\substack{\longrightarrow\\ m\rightarrow 0}}Y_{m,h}\Big)'=\lim_{\substack{\longleftarrow\\ m\rightarrow 0}}Y'_{m,h}$, and $\ds \Big(\lim_{\substack{\longrightarrow\\ m\rightarrow 0}}Z_{m,h}\Big)'=\lim_{\substack{\longleftarrow\\ m\rightarrow 0}}Z'_{m,h}$, from which we obtain the short topologically exact sequence
\beqs
0 \longleftarrow \lim_{\substack{\longleftarrow\\ m\rightarrow 0}}X'_{m,h} \longleftarrow\lim_{\substack{\longleftarrow\\ m\rightarrow 0}}Y'_{m,h} \longleftarrow\lim_{\substack{\longleftarrow\\ m\rightarrow 0}}Z'_{m,h} \longleftarrow 0.
\eeqs
Hence, there exists $\ds T_2\in\lim_{\substack{\longleftarrow\\ m\rightarrow 0}}Y'_{m,h}$ whose restriction to $\ds\mathcal{O}_{C,h}^{\{M_p\}}=\lim_{\substack{\longrightarrow\\ m\rightarrow 0}}X_{m,h}$ is $T_1$. Now observe the projective sequence
\beqs
Y'_{1,h}\xleftarrow{{}^t\iota_{1,1/2}} Y'_{1/2,h}\xleftarrow{{}^t\iota_{1/2,1/3}} Y'_{1/3,h}\xleftarrow{{}^t\iota_{1/3,1/4}}\ldots
\eeqs
where ${}^t\iota_{1/n,1/(n+1)}$ is the transposed mapping of the inclusion $\iota_{1/n,1/(n+1)}$. One easily verifies that ${}^t\iota_{1/n,1/(n+1)}: Y'_{1/(n+1),h}\rightarrow Y'_{1/n,h}$ is given by $\ds (\psi_{\alpha})_{\alpha}\mapsto \left(\frac{n^{2|\alpha|}}{(n+1)^{2|\alpha|}}\psi_{\alpha}\right)_{\alpha}$. By definition, the projective limit $\ds \lim_{\substack{\longleftarrow\\ m\rightarrow 0}}Y'_{m,h}$ is the subspace of $\prod_n Y'_{1/n,h}$ consisting of all elements $\left((\psi^{(k)}_{\alpha})_{\alpha}\right)_k\in \prod_n Y'_{1/n,h}$ such that for all $t,j\in\ZZ_+$, $t<j$, ${}^t\iota_{1/t,1/j}\left((\psi^{(j)}_{\alpha})_{\alpha}\right)=(\psi^{(t)}_{\alpha})_{\alpha}$ (where ${}^t\iota_{1/t,1/j}={}^t\iota_{1/t,1/(t+1)}\circ...\circ{}^t\iota_{1/(j-1),1/j}$). Hence, if we set $(\psi_{\alpha})_{\alpha}=(\psi^{(1)}_{\alpha})_{\alpha}$, then $\ds L^2(\tilde{U},\mu_{1/k})\ni(\psi^{(k)}_{\alpha})_{\alpha}=\left(k^{2|\alpha|}\psi_{\alpha}\right)_{\alpha}$ for all $k\in\ZZ_+$. In other words, we can identify $\ds \lim_{\substack{\longleftarrow\\ m\rightarrow 0}}Y'_{m,h}$ with the space of all $(\psi_{\alpha})_{\alpha}$ such that for every $s>0$, $\ds \left(\sum_{\alpha}\frac{s^{2|\alpha|}}{M_{\alpha}^2}\left\|\psi_{\alpha} e^{-M(h|\cdot|)}\right\|_{L^2(\RR^d)}^2\right)^{1/2}<\infty$. Since $\ds T_2\in \lim_{\substack{\longleftarrow\\ m\rightarrow \infty}} Y'_{1/m,h}$, there exists such $(\psi_{\alpha})_{\alpha}$ such that, for $m\in\ZZ_+$ and $(\chi_{\alpha})_{\alpha}\in Y_{1/m,h}$, we have $\ds T_2\left((\chi_{\alpha})_{\alpha}\right)=\sum_{\alpha}\int_{\RR^d_{\alpha}} m^{2|\alpha|}\psi_{\alpha}\chi_{\alpha} d\mu_{1/m}$. Set $\ds F_{\alpha,h}=\frac{\psi_{\alpha}e^{-2M(h|\cdot|)}}{M_{\alpha}^2}$. Hence, for every $s>0$, $\ds \left(\sum_{\alpha}s^{2|\alpha|}M_{\alpha}^2\left\|F_{\alpha,h} e^{M(h|\cdot|)}\right\|_{L^2(\RR^d)}^2\right)^{1/2}<\infty$. Moreover, for $\varphi\in \mathcal{O}_{C}^{\{M_p\}}(\RR^d)$, there exists $m\in \ZZ_+$ such that $\varphi\in \mathcal{O}_{C,1/m,h}^{M_p}(\RR^d)$. We have
\beqs
\langle T,\varphi \rangle=\sum_{\alpha}\int_{\RR^d} F_{\alpha,h}(x)(-D)^{\alpha}\varphi(x) dx=\sum_{\alpha}\langle D^{\alpha}F_{\alpha,h},\varphi\rangle.
\eeqs
Since $\mathcal{O}_{C,h}^{\{M_p\}}(\RR^d)$ is a $(DFS^*)$-space its strong dual $\left(\mathcal{O}_{C,h}^{\{M_p\}}(\RR^d)\right)'_b$ is complete. If $B$ is a bounded subset of $\mathcal{O}_{C,h}^{\{M_p\}}(\RR^d)$ then it must belong to some $\mathcal{O}^{M_p}_{C,m,h}(\RR^d)$ and be bounded there (the inductive limit $\ds\mathcal{O}_{C,h}^{\{M_p\}}(\RR^d)=\lim_{\substack{\longrightarrow\\ m\rightarrow 0}}\mathcal{O}^{M_p}_{C,m,h}(\RR^d)$ is regular). One easily verifies that $\sum_{\alpha}\sup_{\varphi\in B}\left|\langle D^{\alpha} F_{\alpha, h},\varphi\rangle\right|<\infty$, hence $\sum_{\alpha} D^{\alpha} F_{\alpha,h}$ converges absolutely in $\left(\mathcal{O}_{C,h}^{\{M_p\}}(\RR^d)\right)'_b$. Since $\mathcal{O}_{C}^{\{M_p\}}(\RR^d)$ is continuously and densely injected into $\mathcal{O}_{C,h}^{\{M_p\}}(\RR^d)$ ($\DD^{\{M_p\}}\left(\RR^d\right)$ is dense in these spaces) it follows that the series $\sum_{\alpha} D^{\alpha} F_{\alpha,h}$ converges absolutely in the strong dual of $\mathcal{O}_C^{\{M_p\}}(\RR^d)$.\\

 Conversely, let $T\in \DD'^{\{M_p\}}(\RR^d)$ be as in $(ii)$. Then it is easy to verify that $T$ is a continuous functional on $\DD^{\{M_p\}}(\RR^d)$ when we regard it as subspace of $\mathcal{O}_{C,h}^{\{M_p\}}(\RR^d)$, where $h$ is the one from the condition in $(ii)$. Since $\DD^{\{M_p\}}(\RR^d)$ is dense in $\mathcal{O}_{C,h}^{\{M_p\}}(\RR^d)$, $T$ is continuous functional on $\mathcal{O}_{C,h}^{\{M_p\}}(\RR^d)$ and hence on $\mathcal{O}_C^{\{M_p\}}(\RR^d)$.
\end{proof}

The next theorem realizes our first goal in the article: we may identify $\mathcal{O}'^*_C(\RR^d)$ with the topological dual of $\mathcal{O}^*_C(\RR^d)$. We make use below of the following elementary inequality
\beq\label{inqfor_ass_fun}
e^{M(\rho+\lambda)}\leq 2 e^{M(2\rho)}e^{M(2\lambda)},\,\, \rho,\lambda>0,
\eeq
which is a consequence of the following observation
\beqs
\frac{(\lambda+\rho)^p}{M_p}\leq \frac{2^p\rho^p}{M_p}+\frac{2^p\lambda^p}{M_p}\leq e^{M(2\rho)}+e^{M(2\lambda)}\leq 2e^{M(2\rho)}e^{M(2\lambda)},
\eeqs
where the last inequality holds because the associated function is nonnegative.

\begin{te}
The dual of $\mathcal{O}_C^*(\RR^d)$ is algebraically isomorphic to $\mathcal{O}'^*_C(\RR^d)$.
\end{te}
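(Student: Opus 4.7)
The identification $\left(\mathcal{O}^*_C(\RR^d)\right)' = \mathcal{O}'^*_C(\RR^d)$ amounts to two set-theoretic inclusions inside $\SSS'^*(\RR^d)$ (recall the dual of $\mathcal{O}^*_C(\RR^d)$ was already realized as a subspace of $\SSS'^*(\RR^d)$ via the dense embedding $\SSS^*\hookrightarrow\mathcal{O}^*_C$). I would treat them in turn.

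\textbf{Direction 1:} $\left(\mathcal{O}^*_C(\RR^d)\right)'\subseteq\mathcal{O}'^*_C(\RR^d)$. Given $T\in\left(\mathcal{O}^*_C(\RR^d)\right)'$, I would invoke Theorem \ref{1113} to write $T=\sum_\alpha D^\alpha F_{\alpha,h}$ with the $L^2$-summability condition (\ref{1115}) holding for the appropriate quantifier on $m$ and $h$. For $\varphi\in\SSS^*(\RR^d)$, expand formally
\beqs
(T*\varphi)(x)=\sum_\alpha\int_{\RR^d}F_{\alpha,h}(x-y)D^\alpha\varphi(y)\,dy.
\eeqs
The key estimate is to multiply by a weight $e^{M(\lambda|x|)}$, split it through inequality (\ref{inqfor_ass_fun}) as $e^{M(\lambda|x|)}\leq 2e^{M(2\lambda|x-y|)}e^{M(2\lambda|y|)}$, and apply Cauchy--Schwarz to obtain a bound of the form $\|F_{\alpha,h}e^{M(h|\cdot|)}\|_{L^2}\cdot\|e^{M(2\lambda|\cdot|)}D^\alpha\varphi\|_{L^2}$ upon choosing $h=2\lambda$. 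The second factor is controlled by the $\sigma_m$-seminorm of $\varphi$, producing a bound proportional to $M_\alpha/L^{|\alpha|}$ for $L$ as large as we wish (all $L$ in the Beurling case, the prescribed $L$ in the Roumieu case). Summing over $\alpha$ and using (\ref{1115}) yields that $e^{-M(\lambda|\cdot|)}(T*\varphi)\in L^\infty$ continuously in $\varphi$. Repeating the argument with $D^\beta(T*\varphi)=T*D^\beta\varphi$ produces the full family of seminorm estimates for $T*\varphi\in\SSS^*(\RR^d)$ and the continuity of $\varphi\mapsto T*\varphi$ on $\SSS^*(\RR^d)$, i.e.\ $T\in\mathcal{O}'^*_C(\RR^d)$.

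\textbf{Direction 2:} $\mathcal{O}'^*_C(\RR^d)\subseteq\left(\mathcal{O}^*_C(\RR^d)\right)'$. Given $f\in\mathcal{O}'^*_C(\RR^d)$, I would produce a representation of the form demanded by Theorem \ref{1113} via the parametrix of Komatsu. Pick $K$ a compact neighborhood of the origin; in the Roumieu case part $(ii)$ furnishes $u\in\DD^{\{M_p\}}_{K,(r_j)}$, $\psi\in\DD^{\{M_p\}}_K$ and an ultradifferential operator $P(D)=\sum_\alpha c_\alpha D^\alpha$ of class $\{M_p\}$ with $P(D)u=\delta+\psi$; in the Beurling case one invokes the analogous fully Beurling parametrix (yielding $u,\psi\in\DD^{(M_p)}$ and $P$ of class $(M_p)$). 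Convolving with $f$ produces
\beqs
f=P(D)(f*u)-f*\psi,
\eeqs
and since $f$ is a convolutor of $\SSS^*(\RR^d)$ the two ingredients $g_1:=f*u$ and $g_2:=-f*\psi$ lie in $\SSS^*(\RR^d)$. Setting $F_\alpha:=c_\alpha g_1$ for $\alpha\neq 0$ and $F_0:=c_0 g_1+g_2$ gives $f=\sum_\alpha D^\alpha F_\alpha$. The growth $|c_\alpha|\leq CL^{|\alpha|}/M_\alpha$ (valid for some $L$ in the Beurling case, for every $L$ in the Roumieu case) combined with $e^{M(h|\cdot|)}g_i\in L^2$ (valid for all $h$ in the Beurling case, for some $h$ in the Roumieu case) is exactly what is needed to verify condition (\ref{1115}): choose $m>L$ (Beurling) or, for each prescribed $m$, pick $L<m$ (Roumieu), so the resulting series $\sum_\alpha(L/m)^{2|\alpha|}$ converges. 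Theorem \ref{1113} then yields $f\in\left(\mathcal{O}^*_C(\RR^d)\right)'$.

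\textbf{Expected obstacle.} The principal difficulty is in Direction 2, more specifically in guaranteeing that $f*u$ and $f*\psi$ land in the correct $\SSS^*$-class. Part $(i)$ of the stated parametrix places $u$ only in a Roumieu space, which is insufficient when one starts with $f\in\mathcal{O}'^{(M_p)}_C$; the right tool is the Beurling-class parametrix $(P,u,\psi)\in\DD^{(M_p)}\times\DD^{(M_p)}$. Once the right parametrix is in place, the numerical estimates are routine applications of Cauchy--Schwarz and the growth bounds on the coefficients $c_\alpha$. Direction 1 is mostly a bookkeeping exercise with (\ref{inqfor_ass_fun}) and summability of (\ref{1115}).
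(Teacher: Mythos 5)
Your Direction 1 is essentially the paper's argument: the paper first reduces, via \cite{PBD}, to testing $T*\varphi$ only against $\varphi\in\DD^*(\RR^d)$, but your direct estimate on $\SSS^*(\RR^d)$ (Theorem \ref{1113} plus the weight-splitting inequality (\ref{inqfor_ass_fun}) and Cauchy--Schwarz, with the quantifiers on $m$, $h$, $L$ arranged exactly as you describe) is the same computation; note only that ``$e^{-M(\lambda|\cdot|)}(T*\varphi)\in L^{\infty}$'' should read $e^{M(\lambda|\cdot|)}(T*\varphi)\in L^{\infty}$, consistent with your preceding sentence.

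Direction 2, however, has a genuine gap, and the fix you propose for your ``expected obstacle'' cannot work. A parametrix with $u\in\DD^{(M_p)}$ and $P(D)$ of class $(M_p)$ does not exist: $P(D)$ maps $\DD^{(M_p)}$ continuously into $\DD^{(M_p)}$, so $P(D)u$ would be a smooth function and could never equal $\delta+\psi$. The whole point of Komatsu's parametrix is that $u$ must be taken in the strictly larger classes $\DD^{\{M_p\}}_{K,r}$, resp.\ $\DD^{\{M_p\}}_{K,(r_j)}$, which are \emph{not} contained in $\DD^{(M_p)}$, resp.\ $\DD^{\{M_p\}}$. For the same reason your key step ``$g_1=f*u\in\SSS^*(\RR^d)$ since $f$ is a convolutor'' is unjustified even in the Roumieu case: $u\notin\SSS^{\{M_p\}}(\RR^d)$ in general, so the defining property of $\mathcal{O}'^{\{M_p\}}_C(\RR^d)$ does not apply to it. Pushing the action of $f$ to such $u$ would require an equicontinuity-and-mollification argument of the kind used in the proof of Proposition \ref{S'} (approximate $u$ by $u*\delta_j\in\DD^{\{M_p\}}_K$, obtain uniform bounds, pass to the limit), which you do not supply. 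The paper avoids the parametrix entirely in this direction: it invokes the structural characterization of $\mathcal{O}'^*_C(\RR^d)$ from \cite{PBD}, namely $T=P(D)F_1+F_2$ with $F_1,F_2\in L^{\infty}$ and $\left\|e^{M(r|\cdot|)}(F_1+F_2)\right\|_{L^{\infty}}\leq C$ (for every $r$ in the Beurling case, for some $r$ in the Roumieu case), sets $F_{\alpha}=c_{\alpha}F_1$, and verifies (\ref{1115}) by exactly the coefficient bookkeeping you describe at the end. So the quantifier analysis in your Direction 2 is sound, but the decay of the building blocks must come from that structure theorem (or from a Proposition \ref{S'}-type extension argument), not from the convolutor property applied to the parametrix kernel.
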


\begin{proof} Let $T\in \left(\mathcal{O}_C^*(\RR^d)\right)'\subseteq \SSS'^*(\RR^d)$. To prove that $T\in \mathcal{O}'^*_C(\RR^d)$, by \cite[Prop. 2]{PBD}, it is enough to prove that $T*\varphi\in\SSS^*(\RR^d)$ for each $\varphi\in\DD^*(\RR^d)$. We consider first the $(M_p)$ case. Let $\varphi\in\DD^{(M_p)}(\RR^d)$ and let $m>0$ be arbitrary but fixed. By Theorem \ref{1113}, for $h\geq 2m$, there exist $m_1>0$ and $F_{\alpha,h}$, $\alpha\in\NN^d$, such that (\ref{1115}) holds. Take $m_2>0$ such that $m_2\geq Hm$ and $H/m_2\leq 1/(2m_1)$. For this $m_2$ there exists $C'>0$ such that $\left|D^{\beta}\varphi(x)\right|\leq C' M_{\beta}/ m_2^{|\beta|}$. Using the inequality (\ref{inqfor_ass_fun}), for $x,t\in\RR^d$ one obtains $e^{M(m|x|)}\leq 2 e^{M(h|x-t|)}e^{M(h|t|)}$. Then, we have\\
\\
$\ds \frac{m^{|\beta|}\left|D^{\beta}(T*\varphi)(x)\right|e^{M(m|x|)}}{M_{\beta}}$
\beqs
&\leq& \frac{m^{|\beta|}e^{M(m|x|)}}{M_{\beta}}\sum_{\alpha}\left\|F_{\alpha,h}e^{M(h|\cdot|)}\right\|_{L^2} \left(\int_{\RR^d}\left|D^{\alpha+\beta}\varphi(x-t)\right|^2 e^{-2M(h|t|)}dt\right)^{1/2}\\
&=&\frac{m^{|\beta|}}{M_{\beta}}\sum_{\alpha}\left\|F_{\alpha,h}e^{M(h|\cdot|)}\right\|_{L^2} \left(\int_{\RR^d}\left|D^{\alpha+\beta}\varphi(x-t)\right|^2 e^{2M(m|x|)}e^{-2M(h|t|)}dt\right)^{1/2}\\
&\leq& 2\frac{m^{|\beta|}}{M_{\beta}}\sum_{\alpha}\left\|F_{\alpha,h}e^{M(h|\cdot|)}\right\|_{L^2} \left(\int_{\RR^d}\left|D^{\alpha+\beta}\varphi(x-t)\right|^2 e^{2M(h|x-t|)}dt\right)^{1/2}\\
&\leq& C_1\sum_{\alpha}\frac{m^{|\beta|}M_{\alpha+\beta}}{M_{\beta}m_2^{|\alpha|+|\beta|}} \left\|F_{\alpha,h}e^{M(h|\cdot|)}\right\|_{L^2}\leq C_2\left(\frac{Hm}{m_2}\right)^{|\beta|} \sum_{\alpha}\frac{1}{2^{|\alpha|}}\leq C.
\eeqs
Since $m>0$ is arbitrary, $T*\varphi\in \SSS^{(M_p)}(\RR^d)$ and we obtain $T\in \mathcal{O}'^{(M_p)}_C(\RR^d)$. In the $\{M_p\}$ case, there exist $m_2,C'>0$ such that $\left|D^{\beta}\varphi(x)\right|\leq C' M_{\beta}/ m_2^{|\beta|}$. Also, for $T$ there exist $h>0$ and $F_{\alpha,h}$, $\alpha\in\NN^d$ such that (\ref{1115}) holds for every $m_1>0$. Take $m>0$ such that $m\leq h/2$ and $m\leq m_2/H$ and take $m_1>0$ such that $1/(2m_1)\geq H/m_2$. Then the same calculations as above give $\ds \frac{m^{|\beta|}\left|D^{\beta}(T*\varphi)(x)\right|e^{M(m|x|)}}{M_{\beta}}\leq C$, that is, $T*\varphi\in \SSS^{\{M_p\}}(\RR^d)$. We obtain $T\in\mathcal{O}'^{\{M_p\}}_C(\RR^d)$.

Conversely, let $T\in \mathcal{O}'^*_C(\RR^d)$. In the $(M_p)$ case, by \cite[Prop. 2]{PBD} for every $r>0$ there exist an ultradifferential operator $P(D)$ of class $(M_p)$ and $F_1,F_2\in L^{\infty}\left(\RR^d\right)$ such that $T=P(D)F_1+ F_2$ and $\left\|e^{M(r|\cdot|)}(F_1+F_2)\right\|_{L^{\infty}(\RR^d)}\leq C$. Let $h>0$ be arbitrary but fixed. Choose such a representation of $T$ for $r\geq H^2 h$. For simplicity, we assume that $F_2=0$ and set $F=F_1$. The general case is proved analogously. Let $P(D)=\sum_{\alpha}c_{\alpha} D^{\alpha}$. Then, there exist $c,L\geq 1$ such that $|c_{\alpha}|\leq cL^{|\alpha|}/M_{\alpha}$. Let $F_{\alpha}=c_{\alpha}F$. By \cite[Prop. 3.6]{Komatsu1} we have $e^{4M(h|x|)}\leq C_1 e^{M(H^2h|x|)}\leq C_1 e^{M(r|x|)}$. We obtain
\beqs
\sum_{\alpha} \frac{M_{\alpha}^2}{(2L)^{2|\alpha|}}\left\|e^{M(h|\cdot|)}F_{\alpha}\right\|_{L^2}^2\leq C_1\sum_{\alpha} \frac{M_{\alpha}^2}{(2L)^{2|\alpha|}}|c_{\alpha}|^2\left\|e^{M(r|\cdot|)}F\right\|_{L^{\infty}}^2 \left\| e^{-M(h|\cdot|)}\right\|_{L^2}^2<\infty.
\eeqs
So, for the chosen $h>0$, (\ref{1115}) holds with $m=2L$. Since $T=\sum_{\alpha}D^{\alpha} F_{\alpha}$, by Theorem \ref{1113} we have $T\in\left(\mathcal{O}^{(M_p)}_C(\RR^d)\right)'$. In the $\{M_p\}$ case there exist $r>0$, an ultradifferential operator $P(D)$ of class $\{M_p\}$ and $L^{\infty}$-functions $F_1$ and $F_2$ such that $T=P(D)F_1+F_2$ and $\left\|e^{M(r|\cdot|)}(F_1+F_2)\right\|_{L^{\infty}(\RR^d)}\leq C$. For simplicity, we assume that $F_2=0$ and set $F=F_1$. The general case is proved analogously. Since $P(D)=\sum_{\alpha}c_{\alpha} D^{\alpha}$ is of class $\{M_p\}$ for every $L>0$ there exists $c>0$ such that $|c_{\alpha}|\leq cL^{|\alpha|}/M_{\alpha}$. Set $F_{\alpha}=c_{\alpha}F$. Take $h\leq r/H^2$. Let $m>0$ be arbitrary but fixed. Then there exists $c>0$ such that $|c_{\alpha}|\leq cm^{|\alpha|}/(2^{|\alpha|}M_{\alpha})$. Similarly as above $\sum_{\alpha} M_{\alpha}^2 m^{-2|\alpha|}\left\|e^{M(h|\cdot|)}F_{\alpha}\right\|_{L^2}^2<\infty$. Since $T=\sum_{\alpha} D^{\alpha} F_{\alpha}$, by Theorem \ref{1113}, we have $T\in\left(\mathcal{O}^{\{M_p\}}_C(\RR^d)\right)'$.
\end{proof}

It would be also interesting to study the relation between the strong dual topology on $\mathcal{O}'^*_C(\mathbb{R}^{d})$ provided by the duality $\left\langle \mathcal{O}^*_C(\RR^d),\mathcal{O}'^*_C(\RR^d)\right\rangle$  and the induced one on  $\mathcal{O}'^*_C(\mathbb{R}^{d})$  as a (closed) subspace of $\mathcal{L}_{b}(\mathcal{S}^*(\mathbb{R}^{d}),\mathcal{S}^{*}(\mathbb{R}^{d}))$ (the latter topology was considered in \cite{PBD}).

%___________________________________________________________________________________

\section{Translation-invariant Banach spaces of tempered ultradistributions}\label{UTIB}

We employ the notation $T_h$ for the translation operator $T_hg=g(\:\cdot\:+h),$ $h\in\mathbb R^d$. The symbol ``$\hookrightarrow $'' stands for a continuous and dense inclusion. In the rest of the article we are interested in translation-invariant $(B)$-spaces of ultradistributions satisfying the properties from the following definition.

\begin{de}\label{def E}
A $(B)$-space $E$ is said to be a translation-invariant $(B)$-space of tempered ultradistributions of class $\ast$ if it satisfies the following three axioms:

\begin{itemize}
    \item[(I)] $\mathcal{D}^*(\mathbb{R}^d)\hookrightarrow E\hookrightarrow \mathcal{D}'^*(\mathbb{R}^d)$.
    \item[(II)] $T_h:E\rightarrow E$ for every $h\in\RR^d$ (i.e., $E$ is translation-invariant).
\item [(III)] For any $g\in E$ there exist $C=C_g>0$ and $\tau=\tau_g>0$, (for every $\tau>0$ there exists $C=C_{g,\tau}>0$) such that $\|T_hg\|_E\leq C e^{M(\tau|h|)}$, $\forall h\in\RR^d$.
\end{itemize}
The weight function of $E$ is the function $\omega:\mathbb{R}^{d}\to (0,\infty)$ given by\footnote{By applying the closed graph theorem, the axioms (I) and (II) yield $T_h\in\mathcal{L}(E)$ for every $h\in\RR^d$.} $\omega(h):=\|T_{-h}\|_{\mathcal{L}(E)}$.
\end{de}

\smallskip

Throughout the rest of the article we assume that $E$ is a translation-invariant $(B)$-space of tempered ultradistributions. It is clear that $\omega(0)=1$ and that $\ln \omega$ is a subadditive function. We will prove that $\omega$ is measurable and locally bounded; this allows us to associate to $E$ the Beurling algebra $L^{1}_{\omega}$ \cite{beurling}, namely, the Banach algebra of measurable functions $u$ such that $\|u\|_{1,\omega}:=\int_{\mathbb{R}^{d}}|u(x)| \:\omega(x) dx<\infty$. The next theorem collects a number of important properties of $E$.

\begin{te}\label{ttt11}
The following property hold for $E$ and $\omega$:
\begin{itemize}
\item [$(a)$] $\mathcal{S}^*(\mathbb{R}^d)\hookrightarrow E\hookrightarrow \mathcal{S}'^*(\mathbb{R}^d)$.
\item [$(b)$] For each $g\in E$, $\ds\lim_{h\to0}\|T_{h}g-g\|_{E}=0$. $($Hence the mapping $h\mapsto T_h g$ is continuous.$)$
\item [$(c)$] There are $\tau,C>0$ (for every $\tau>0$ there is $C=C_{\tau}>0$) such that
		$$\omega(h)\leq C e^{M(\tau |h|)}, \ \ \ \forall h\in\mathbb{R}^{d}.$$
\item [$(d)$] $E$ is separable and $\omega$ is measurable.
\item [$(e)$] The convolution mapping $\ast: \mathcal{S}^*(\mathbb{R}^d)\times \mathcal{S}^*(\mathbb{R}^d)\rightarrow \mathcal{S}^*(\mathbb{R}^d) $ extends to  $\ast: L^{1}_{\omega}\times E\rightarrow E$ and $E$ becomes a Banach module over the Beurling
algebra $L^{1}_{\omega}$, that is,

\begin{equation}\label{eqwconvolution1}
\|u\ast g\|_{E}\leq \|u\|_{1,\omega}\|g\|_{E}.
\end{equation}
 Furthermore, the bilinear mapping $\ast:\mathcal{S}^*(\mathbb{R}^d)\times
E\rightarrow E$ is continuous.
\item [$(f)$] Let $g\in E$ and let $\varphi\in \mathcal{S}^*(\mathbb{R}^d)$. Set $\ds \varphi_{\varepsilon}(x)=\varepsilon^{-d}\varphi
\left(x/ \varepsilon \right)$ and $c=\int_{\mathbb{R}^d}\varphi(x)dx$. Then, $\ds\lim_{\varepsilon\to 0^+} \|cg-\varphi_\varepsilon*g\|_E=0$.
\end{itemize}
Alternatively, in the $\{M_p\}$ case, the property $(c)$ is equivalent to:
\begin{itemize}
\item[$(\tilde{c})$] there exist $(l_p)\in\mathfrak{R}$ and $C>0$ such that $\omega(h)\leq C e^{N_{l_p}(|h|)}$, $\forall h\in\RR^d$.
\end{itemize}
\end{te}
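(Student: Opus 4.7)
My plan is to prove the six properties in the order $(c)$/$(\tilde c)$, $(d)$, $(b)$, $(e)$, $(a)$, $(f)$, each statement being used in the subsequent ones. Axioms (I) and (II) together with the closed graph theorem immediately give $T_h\in\mathcal{L}(E)$ for every $h$, so $\omega$ is well-defined and each functional $g\mapsto\|T_h g\|_E$ is continuous on $E$. The central technical step is a Baire category argument that promotes the $g$-wise ultrapolynomial bounds of axiom (III) to a single operator-norm bound on $\omega$.

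For $(c)$, I would fix $\tau>0$ and set $B_{n,\tau}:=\{g\in E : \|T_h g\|_E\leq n\,e^{M(\tau|h|)}\text{ for all }h\in\RR^d\}$, which is closed in $E$ as an intersection of closed half-spaces. By axiom (III), taken with some fixed $\tau$ in the Beurling case and with the given $\tau$ in the Roumieu case, the sets $B_{n,\tau}$ cover $E$; Baire's theorem then produces $n_0$ with nonempty interior and the standard difference argument yields $\omega(h)\leq C_\tau e^{M(\tau|h|)}$. For the Roumieu improvement $(\tilde c)$ I would apply Lemma \ref{ppp} to the increasing function $\rho\mapsto\ln(1+\sup_{|h|\leq\rho}\omega(h))$, which by $(c)$ satisfies $\ln(1+\sup_{|h|\leq\rho}\omega(h))\leq M(\tau\rho)+\ln C_\tau$ for every $\tau>0$, obtaining a subordinate function $\epsilon$; the standard majoration $M(\epsilon(\rho))\leq N_{l_p}(\rho)+\mathrm{const}$ for a suitable $(l_p)\in\mathfrak{R}$ then gives $(\tilde c)$. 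For $(d)$, separability of $E$ follows from density of $\mathcal{D}^*$ and separability of $\mathcal{D}^*$; representing $\omega(h)=\sup_n\|T_{-h}\varphi_n\|_E$ over a countable family $(\varphi_n)\subset\mathcal{D}^*$ of $E$-unit vectors dense in the unit sphere, each $h\mapsto\|T_{-h}\varphi_n\|_E$ is continuous (since translation is continuous on $\mathcal{D}^*$ and $\mathcal{D}^*\hookrightarrow E$), so $\omega$ is lower semicontinuous, hence measurable.

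Property $(b)$ is then proved by a three-$\varepsilon$ argument: for $\varphi\in\mathcal{D}^*$ continuity of translation in $\mathcal{D}^*$ and the embedding $\mathcal{D}^*\hookrightarrow E$ give $\|T_h\varphi-\varphi\|_E\to 0$, while for arbitrary $g\in E$ one approximates by $\mathcal{D}^*$-functions and uses $\|T_h g-T_h\varphi\|_E\leq \omega(-h)\|g-\varphi\|_E$ with the local boundedness of $\omega$ from $(c)$. Property $(e)$ is obtained by defining
$$u*g:=\int_{\RR^d}u(y)\,T_{-y}g\,dy$$
as a Bochner integral in $E$, which is legitimate because by $(b)$ the map $y\mapsto T_{-y}g$ is strongly continuous with $\|T_{-y}g\|_E\leq\omega(y)\|g\|_E$ and $u\in L^1_\omega$; pulling the norm inside the integral yields \eqref{eqwconvolution1}, and agreement with the usual convolution on $\mathcal{D}^*\times\mathcal{D}^*$ is routine. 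Continuity of $*:\mathcal{S}^*\times E\to E$ follows since $\mathcal{S}^*\hookrightarrow L^1_\omega$ by $(c)$ combined with the fast decay of Schwartz-type ultradifferentiable functions. For the embedding $\mathcal{S}^*\hookrightarrow E$ in $(a)$, I would invoke Komatsu's parametrix to choose $u,\chi\in\mathcal{D}^*$ and an ultradifferential operator $P(D)$ of class $*$ with $\delta=P(D)u+\chi$, so that every $\varphi\in\mathcal{S}^*$ decomposes as $\varphi=u*P(D)\varphi+\chi*\varphi$; since $u,\chi\in\mathcal{D}^*\subset E$ and $P(D)\varphi,\varphi\in\mathcal{S}^*\subset L^1_\omega$, part $(e)$ places $\varphi$ in $E$ with the required continuity estimates. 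For $E\hookrightarrow\mathcal{S}'^*$, the reflexivity of $\mathcal{D}^*$ transfers $E\hookrightarrow\mathcal{D}'^*$ into $\mathcal{D}^*\hookrightarrow E'$, and then for $g\in E$, $\varphi\in\mathcal{D}^*$,
$$|(g*\varphi)(x)|=|\langle g,T_{-x}\check\varphi\rangle|\leq\|g\|_E\|T_{-x}\check\varphi\|_{E'}\leq\|g\|_E\,\omega(-x)\|\check\varphi\|_{E'},$$
so that $g\in\mathcal{S}'^*$ by Proposition \ref{S'}; density is transferred from $\mathcal{D}^*$. Finally, $(f)$ follows from
$$\varphi_\varepsilon*g-cg=\int_{\RR^d}\varphi_\varepsilon(y)(T_{-y}g-g)\,dy$$
combined with $(b)$ and dominated convergence, the integrand being controlled by $|\varphi_\varepsilon(y)|(\omega(y)+1)\|g\|_E$.

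The main obstacle is $(\tilde c)$ in the Roumieu case: one must convert the family of $\tau$-dependent bounds from $(c)$ into a single uniform bound of the form $\omega(h)\leq C e^{N_{l_p}(|h|)}$ with one fixed sequence $(l_p)\in\mathfrak{R}$, which is precisely the role of the technical Lemma \ref{ppp} on subordinate functions. Without this refinement one could not treat $L^1_\omega$ as a single Beurling algebra in the Roumieu setting, so the Banach-module structure of $(e)$ would not fit within the standard framework required by the rest of the paper.
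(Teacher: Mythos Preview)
Your overall strategy is sound and several of your choices are cleaner than the paper's: defining $u*g$ as a Bochner integral in $(e)$ and handling $(f)$ by dominated convergence after the substitution $y=\varepsilon z$ are more direct than the paper's Riemann-sum and density arguments. Your ordering $(c)\to(d)\to(b)$ (using continuity of $h\mapsto T_{-h}\varphi$ on $\mathcal D^*$ to get lower semicontinuity of $\omega$ without first proving $(b)$) is also a legitimate variant; the paper instead proves $(b)$ first and uses it for the measurability of $\omega$.

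There is, however, a genuine gap in your proof of $\mathcal S^*\hookrightarrow E$ in $(a)$. Komatsu's parametrix does \emph{not} deliver $u\in\mathcal D^*$: in the Beurling case it gives $u\in\mathcal D^{\{M_p\}}_{K,r}$ for a prescribed $r>0$, and in the Roumieu case $u\in\mathcal D^{\{M_p\}}_{K,(r_j)}$ for a prescribed $(r_j)\in\mathfrak R$. These spaces strictly contain $\mathcal D^*_K$ (indeed, $\widehat u(\xi)=(1+\widehat\psi(\xi))/P(\xi)$ cannot decay faster than $e^{-M(L|\xi|)}$ for one fixed $L$), so axiom~(I) alone does not place $u$ in $E$. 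What you actually need is that $\mathcal D^{\{M_p\}}_{K,r}\subset E$ for some suitable $r$ (respectively $\mathcal D^{\{M_p\}}_{K,(r_j)}\subset E$ for some $(r_j)$), and then to choose the parametrix with that parameter. This is exactly the content of Lemma~\ref{lemma4.2}, whose proof is independent of Theorem~\ref{ttt11} but is not entirely trivial (it uses a mollification argument to pass from the continuity estimate on $\mathcal D^*_K$ to the larger space). The paper sidesteps this issue entirely by using a partition of unity $\varphi=\sum_{m\in\mathbb Z^d}\psi(\cdot-m)\varphi$ with $\psi\in\mathcal D^*_{[-1,1]^d}$, bounding each term via $(c)$ and the continuity of $\mathcal D^*_{[-1,1]^d}\to E$; this uses only axiom~(I) directly.

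A second, smaller omission: you establish $E\subset\mathcal S'^*$ as sets via Proposition~\ref{S'}, but say nothing about continuity of the inclusion. The paper closes this with the Pt\'ak closed graph theorem (the graph is closed because $E\to\mathcal D'^*$ is continuous and $\mathcal S'^*\to\mathcal D'^*$ is a continuous injection, and $\mathcal S'^*$ is a Pt\'ak space in both cases).
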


\begin{proof}  The property $(b)$ follows directly from the axioms (I)--(III). For $(d)$, notice that (I) yields at once the separability of $E$. On the other hand, if $D$ is a countable and dense subset of the unit ball of $E$, we have $\omega(h)=\sup_{g\in D}\|T_{-h}g\|_{E}$, and so $(b)$ yields the measurability of $\omega$.

We now show $(c)$. In the $(M_p)$ case, consider the sets $E_{j,\nu}=\{g\in E\,|\, \|T_{h}g\|_E\leq j e^{M(\nu|h|)},\forall h \in \mathbb{R}^d\},$ $j,\nu\in \ZZ_+.$ Because of (III), $E=\bigcup_{j,\nu\in \ZZ_+}E_{j,\nu}$. Since $E_{j,\nu}=\bigcap_{h\in \RR^d}E_{j,\nu,h}$, where $E_{j,\nu,h}=\left\{g\in E\,|\, \|T_{h}g\|_E\leq j e^{M(\nu|h|)}\right\}$,  each of these sets is closed in $E$ by the continuity of $T_h$, and so are $E_{j,\nu}$. Now, a classical category argument gives the claim.
In the $\{M_p\}$ case, for fixed $\tau>0$, we consider the sets $E_{j}=\left\{g\in E\,|\, \|T_{h}g\|_E\leq j e^{M(\tau|h|)}\mbox \,\,\mbox{for all }\mbox{   } h \in \mathbb{R}^d\right\}, j \in\ZZ_+.$
Obviously $E=\bigcup_{j\in\ZZ_+} E_j$. Again the Baire category theorem yields the claim.

Let us prove that $(c)$ is equivalent to $(\tilde{c})$. Obviously $(\tilde{c})\Rightarrow (c)$. Conversely, define $F:[0,\infty)\rightarrow [0,\infty)$ as
\beqs
F (\rho)=\sup_{|h|\leq \rho}\sup_{\|g\|_E\leq 1}\ln_+\|T_h g\|_E.
\eeqs
One easily verifies that $F(\rho)$ is increasing and satisfies the conditions of Lemma \ref{ppp}. Hence there exists a subordinate function $\epsilon(\rho)$ and $C'>1$ such that $F(\rho)\leq M(\epsilon(\rho))+\ln C'$. Hence, we obtain $\ds \sup_{\|g\|_E\leq 1}\|T_h g\|_E\leq C' e^{M(\epsilon(|h|))}$. Now, \cite[Lem. 3.12]{Komatsu1} implies that there exists a sequence $\tilde{N}_p$ which satisfies $(M.1)$ such that $M(\epsilon(\rho))\leq \tilde{N}(\rho)$ as $\ds\frac{\tilde{N}_p M_{p-1}}{\tilde{N}_{p-1}M_p}\rightarrow \infty$ as $p\rightarrow \infty$. Set $\ds l'_p=\frac{\tilde{N}_p M_{p-1}}{\tilde{N}_{p-1}M_p}$. Take $(l_p)\in \mathfrak{R}$ such that $l_p\leq l'_p$, for all $p\in\ZZ_+$. Then
\beqs
\sup_{\|g\|_E\leq 1}\|T_h g\|_E\leq C' e^{\tilde{N}(|h|)}= C' \sup_{p\in\NN} \frac{|h|^p}{M_p\prod_{j=1}^p l'_j}\leq C' \sup_{p\in\NN} \frac{|h|^p}{M_p\prod_{j=1}^p l_j}= C' e^{N_{l_p}(|h|)},
\eeqs
whence $(\tilde{c})$ follows.

We now address the property $(a)$. We first prove the embedding $\mathcal{S}^*(\mathbb{R}^{d})\hookrightarrow E$. Since $\mathcal
D^*(\mathbb R^d)\hookrightarrow \mathcal S^*(\mathbb R^d),$ it is enough to prove that
$\mathcal{S}^*(\mathbb{R}^{d})$ is continuously injected into $E$. Let $\varphi \in \SSS^*(\RR^d)$. We use a special partition of unity:
\beqs
1=\sum_{m\in\mathbb{Z}^{d}} \psi(x-m), \ \ \ \psi\in\mathcal{D}^*_{[-1,1]^{d}}
\eeqs
and we get the representation $\varphi(x)=\sum_{m\in\mathbb{Z}^{d}}\psi(x-m)\varphi(x)$. We estimate each term in this
sum. Because of $(c)$, there exist constants $C>0$ and $\tau>0$ (for every $\tau>0$ there exists $C>0$) such that
\begin{equation}
\label{eq:1.1}
\|\varphi\: T_{-m}\psi\|_E\leq\frac{C}{e^{M(\tau|m|)}}\left\|e^{2M(\tau|m|)}\psi T_{m}\varphi\right\|_E.
\end{equation}
We need to prove that the multi-sequence of operators
$\left\{\rho_{m,\tau}\right\}_{m\in\mathbb{Z}^{d}}:\mathcal{S}^*(\RR^d)\rightarrow\mathcal{D}^*_{[-1,1]^d},$
defined as
\begin{equation}
\label{eq:1.2}
\rho_{m,\tau}(\varphi):=e^{2M(\tau|m|)}\psi T_{m}\varphi,
\end{equation}
is uniformly bounded on a fixed bounded subset of $\SSS^*(\RR^d)$, where $\tau>0$ in the $\{M_p\}$ case will be chosen later. Let $B$ be bounded set in $\mathcal{S}^*(\RR^d)$. Then for each $h>0$ (for some $h>0$)
\beq\label{11777}
\sup_{\varphi\in B}\sup_{\alpha\in\NN^d}\frac{h^{|\alpha|} \left\|e^{M(h|\cdot|)}D^{\alpha}\varphi\right\|_{L^{\infty}(\RR^d)}} {M_{\alpha}}<\infty.
\eeq
By \cite[Lem. 3.6]{Komatsu1} we have $e^{2M(\tau|m|)}\leq c_0 e^{M(H\tau|m|)}$ and hence
\beq\label{11177}
e^{2M(\tau|m|)}\leq 2c_0 e^{M(2H\tau|m+x|)}e^{M(2H\tau|x|)}\leq C_1e^{M(2H\tau|m+x|)},\,\,
\eeq
$ \forall x\in[-1,1]^d,\, \forall m\in\ZZ^d.$
In the $(M_p)$ case let $h_1>0$ be arbitrary but fixed. Choose $h>0$ such that $h\geq 2h_1$ and $h\geq 2H\tau$. For this $h$, (\ref{11777}) holds and by (\ref{11177}) and the fact that $\psi\in \DD^{(M_p)}_{[-1,1]^d}$, one readily verifies that
\beq\label{nr111}
\frac{h_1^{|\alpha|} \left|D^{\alpha}\left(\psi(x) T_m\varphi(x)\right)\right|}{ M_\alpha}\leq \frac{C'}{e^{2M(\tau|m|)}},\,\, \mbox{for all } \varphi\in B,\, m\in\ZZ^d.
\eeq
Hence $\{\rho_{m,\tau}|\, m\in\ZZ^d\}$ is uniformly bounded on $B$. In the $\{M_p\}$ case, there exist $\tilde{h},\tilde{C}>0$ such that $\left|D^{\alpha}\psi(x)\right|\leq \tilde{C}M_{\alpha}/ \tilde{h}^{|\alpha|}$. For the $h$ for which (\ref{11777}) holds choose $h_1>0$ such that $h_1\leq \min\{h/2,\tilde{h}/2\}$ and choose $\tau\leq h/(2H)$. Then, by using (\ref{11177}), similarly as in the $(M_p)$ case, we obtain (\ref{nr111}), namely, $\{\rho_{m,\tau}|\, m\in\ZZ^d\}$ is uniformly bounded on $B$. By (I), the mapping $\mathcal{D}^*_{[-1,1]^d}\rightarrow E$ is continuous; hence, $\|\rho_m(\varphi)\|_E\leq C_2$, for all $\varphi\in B$, $m\in\mathbb{Z}^{d}$.

In view of (\ref{eq:1.1}) and the later fact, we have that
$\left\{\sum_{|m|\leq N} \varphi T_{-m}\psi\right\}_{N=0}^{\infty}$ is a Cauchy sequence in $E$ whose limit is
$\varphi\in E$; one also obtains
$\|\varphi\|_E \leq C$ for all $\varphi\in B$. We have just proved that the inclusion $\mathcal{S}^*(\mathbb{R}^{d})\rightarrow E$ maps bounded sets into bounded sets and,
since $\mathcal{S}^*(\mathbb{R}^{d})$ is bornological, it is continuous.

We now address $E\subseteq\mathcal{S}'^*(\mathbb{R}^{d})$ and the continuity of the inclusion mapping. Let $g\in E$. We employ Proposition \ref{S'}. Let $B$ be a bounded set in $\mathcal{D}^*(\mathbb{R}^{d})$. The inclusion $E\hookrightarrow\mathcal{D}'^*(\mathbb{R}^{d})$ yields the existence of a constant $D=D(B)$ such that $\left|\left\langle
g,\check{\phi}\right\rangle\right|\leq D\|g\|_{E}$ for all $g\in E$ and $\phi\in B$. Therefore, by $(c)$, there exist $\tau,C>0$ (for every $\tau>0$ there exists $C>0$) such that
\beqs
|(g\ast \phi)(h)|\leq D\|T_{h}g\|_{E}\leq CD\|g\|_{E}e^{M(\tau|h|)},
\eeqs
for all  $g\in E$, $\phi\in B$, $h\in\mathbb{R}^{d}$. In the $(M_p)$ case, Proposition \ref{S'} implies that $E\subseteq \mathcal{S}'^{(M_p)}(\mathbb{R}^{d})$. In the $\{M_p\}$ case, the property $(\tilde{c})$, together with Proposition \ref{S'}, implies $E\subseteq \mathcal{S}'^{\{M_p\}}(\mathbb{R}^{d})$. Since $E\rightarrow \DD'^*\left(\RR^d\right)$ is continuous it has a closed graph, hence so does the inclusion $E\rightarrow \SSS'^*(\RR^d)$ ($\SSS'^*(\RR^d)$ is continuously injected into $\DD'^*\left(\RR^d\right)$). Since $\SSS'^{(M_p)}(\RR^d)$ is a $(DFS)$-space ($\SSS'^{\{M_p\}}(\RR^d)$ is an $(FS)$-space) it is a Pt\'{a}k space (cf. \cite[Sect. IV. 8, p. 162]{Sch}). Thus, the continuity of $E\rightarrow \SSS'^*(\RR^d)$ follows from the Pt\'{a}k closed graph theorem (cf. \cite[Thm. 8.5, p. 166]{Sch}). The proof of $(a)$ is complete.

We now show that $E$ is a Banach module over $L^{1}_{\omega}$.
Let $\varphi,\psi\in\DD^*(\RR^d)$ and denote $K=\mathrm{supp}\, \varphi$. We prove that \beq\label{1}
\|\varphi\ast \psi\|_E\leq \|\psi\|_E\int_{\mathbb{R}^d}|\varphi(x)|\:\omega(x)dx.
\eeq
The Riemann sums
\beqs
L_{\varepsilon}(\cdot)=\varepsilon^d\sum_{n\in\ZZ^d,\, \varepsilon n\in K}\varphi(\varepsilon n)\psi(\cdot-\varepsilon n)= \varepsilon^d\sum_{n\in\ZZ^d,\, \varepsilon n\in K}\varphi(\varepsilon n)T_{-\varepsilon n}\psi
\eeqs
converge to $\varphi*\psi$ in $\SSS^*(\RR^d)$ as $\varepsilon \rightarrow0^+$. By $(a)$ they also converge in $E$ to the same element, that is, $L_{\varepsilon}\rightarrow \varphi*\psi$ as $\varepsilon\rightarrow 0^+$ in $E$. Set $\omega_{\psi}(t)=\|T_{-t}\psi\|_E$. Then $\omega_{\psi}$ is continuous by $(b)$. Observe that
\beq\label{11}
\|L_{\varepsilon}\|_E\leq \sum_{y\in\ZZ^d,\, \varepsilon y\in K}|\varphi(\varepsilon y)|\|T_{-\varepsilon y}\psi\|_E \varepsilon^d=\sum_{y\in\ZZ^d,\, \varepsilon y\in K}|\varphi(\varepsilon y)|\omega_{\psi}(\varepsilon y) \varepsilon^d
\eeq
and the last term converges to $\ds \int_K |\varphi(y)|\omega_{\psi}(y)dy$. Since $\omega_{\psi}(t)=\|T_{-t}\psi\|_E\leq \|\psi\|_E \omega(t)$, if we let $\varepsilon\rightarrow 0^+$ in (\ref{11}) we obtain (\ref{1}). By using (I) and a standard density argument, the convolution can be extended to $\ast: L^{1}_{\omega}\times E\to E$ and (\ref{1}) leads to (\ref{eqwconvolution1}). The continuity of the convolution as a bilinear mapping $\mathcal{S}^*(\mathbb{R}^d)\times E\rightarrow E$ in the $(M_p)$ case is an easy consequence of (\ref{eqwconvolution1}). In the $\{M_p\}$ case, we can conclude separate continuity from (\ref{eqwconvolution1}), but then, \cite[Thm. 41.1, p. 421]{treves} implies the desired continuity. This shows (e).

Finally, if $g\in \mathcal{S}^*(\mathbb{R}^d)$ and $\varphi\in \mathcal{S}^*(\mathbb{R}^d)$, then, by property $(a)$ and (\ref{eqwconvolution1}), $\ds\lim_{\varepsilon\to 0^+} \|cg-\varphi_\varepsilon*g\|_E=0$. The general case of $(f)$, namely the case $g\in E$, can be established via a density argument.

\end{proof}

As done in $(e)$, one can also extend the convolution as a mapping $\ast: E\times L^{1}_{\omega}\to E$ and obviously $u\ast g=g\ast u$.

We now discuss some properties that automatically transfer to the dual space $E'$ by duality. Note that the property $(a)$ from Theorem \ref{ttt11}  implies the continuous injections
$\mathcal{S}^*(\mathbb{R}^d)\to E'\to \mathcal{S}'^*(\mathbb{R}^d)$. The condition (II) from Definition \ref{def E} remains valid for $E'$. We define the weight function of $E'$ as
$$\check{\omega}(h):=\|T_{-h}\|_{\mathcal{L}(E')}=\|T^{\top}_{h}\|_{\mathcal{L}(E')}=\omega(-h),$$
where one of the equalities follows from the well known bipolar theorem (cf. \cite[p. 160]{Sch}). Thus $(c)$ and $(\tilde{c})$ from Theorem \ref{ttt11} hold for the weight function $\check\omega$ of $E'$. In particular, the axiom (III) holds for $E'$. In general, however, $E'$ may fail to be a translation-invariant $(B)$-spaces of tempered ultradistributions because (I) may not be any longer true for it. Note also that $E'$ can be non-separable. In addition, the property $(b)$ from Theorem \ref{ttt11} may also fail for $E'$, but on the other hand it follows by duality that, given $f\in E'$,

 \begin{itemize}
        \item [$(b'')$] The mappings $\mathbb{R}^d\to E'$ given by $h\mapsto T_hf$ are continuous for the weak$^{\ast}$ topology.
    \end{itemize}
The associated Beurling algebra to $E'$ is $L^{1}_{\check{\omega}}$. We define the convolution $u\ast f=f\ast u$ of $f\in E'$ and $u\in L^{1}_{\check{\omega}}$ via transposition: $\left\langle u\ast f,g \right\rangle:= \left\langle f,\check {u}\ast g\right\rangle$, $g\in E$. In view of $(e)$ from Theorem \ref{ttt11}, this convolution is well defined because $\check {u}\in L^{1}_{\omega}$. It readily follows that $(e)$ holds when $E$ and $\omega$ are replaced by $E'$ and $\check{\omega}$; so $E'$ is a Banach module
over the Beurling algebra $L^{1}_{\check{\omega}}$, i.e., $\|u\ast f\|_{E'}\leq \|u\|_{1,\check{\omega}}\|f\|_{E'}$. Concerning the property $(f)$ from Theorem \ref{ttt11}, it may be no longer satisfied by $E'$.

In summary, $E'$ might not be as rich as $E$. We introduce the following space that enjoys better properties than $E'$ with respect to the translation group.

\begin{de}\label{goodspace}
The $(B)$-space $E'_{\ast}$ stands for $E'=L^{1}_{\check{\omega}}\ast E'$.
\end{de}

Note that $E'_{\ast}$ is a closed linear subspace of $E'$, due to the Cohen-Hewitt factorization theorem \cite[p. 178]{kisynski} and the fact that $L^{1}_{\check{\omega}}$ possesses bounded approximation unities. The ensuing theorem shows that $E'_{\ast}$ possesses many of the properties that $E'$ lacks. It also gives a characterization of $E'_{\ast}$ and tells us that the property (I) holds for $E'$ when $E$ is reflexive.

\begin{te}
\label{thgoodspace}
The $(B)$-space $E'_{\ast}$ satisfies:
\begin{itemize}
\item [$(i)$] $\mathcal{S}^*(\mathbb{R}^d)\to E'_{\ast}\to \mathcal{S}'^*(\mathbb{R}^d)$ and $E'_{\ast}$ a is Banach module over $L^{1}_{\check\omega}$.
\item [$(ii)$] The properties (II) from Definition \ref{def E} and $(b)$ and $(f)$ from Theorem \ref{ttt11} are valid when $E$ is replaced by $E'_{\ast}$.
\item [$(iii)$] $\ds E'_{\ast}=\left\{f\in E'|\, \lim_{h\to 0}\|T_{h}f-f\|_{E'}=0\right\}$.

\item [$(iv)$] If $E$ is reflexive, then $E'_{\ast}=E'$ and $E'$ is also a translation-invariant $(B)$-space of tempered ultradistributions of class $\ast$.

\end{itemize}

\end{te}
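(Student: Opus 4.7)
The plan is to establish (iii) first, since the characterization $E'_* = \{f \in E' : \|T_h f - f\|_{E'} \to 0\}$ is the conceptual core from which (i) and (ii) follow cleanly, while (iv) needs a separate reflexivity argument. For the inclusion $E'_* \subseteq F$ (denoting the right-hand side by $F$), given any $f = u * g$ with $u \in L^1_{\check{\omega}}$ and $g \in E'$, I would use $T_h f - f = (T_h u - u) * g$ together with the $E'$-analog $\|u \ast f\|_{E'}\leq \|u\|_{1,\check{\omega}}\|f\|_{E'}$ of \eqref{eqwconvolution1}, thereby reducing the claim to the strong continuity of translation in the Beurling algebra $L^1_{\check{\omega}}$; this is standard, via density of $C_c(\RR^d)$ in $L^1_{\check{\omega}}$ and dominated convergence. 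For the reverse inclusion $F \subseteq E'_*$, given $f \in F$ I would take a mollifier $\varphi_\varepsilon \in \DD^*$ with $\int \varphi_\varepsilon = 1$ and derive
\[
\|\varphi_\varepsilon * f - f\|_{E'} \leq \int_{\RR^d} |\check{\varphi}_\varepsilon(y)| \, \|T_y f - f\|_{E'} \, dy
\]
by pairing against $g \in E$ and using $\langle \varphi_\varepsilon * f, g \rangle = \int \check{\varphi}_\varepsilon(y) \langle T_y f, g\rangle \, dy$. Since $\check{\varphi}_\varepsilon$ concentrates near $0$ and $\|T_y f-f\|_{E'}\to 0$ there, the bound tends to $0$. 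As each $\varphi_\varepsilon * f \in L^1_{\check{\omega}} * E' = E'_*$ and $E'_*$ is closed in $E'$ (Cohen--Hewitt), we conclude $f \in E'_*$.

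Parts (i) and (ii) then drop out. For (ii): axiom (II) holds since $T_h(u*g) = (T_h u)*g \in L^1_{\check{\omega}} * E'$; property (b) is the restatement of (iii); and property (f) follows from the same weak integral representation with $\int \varphi = c$, yielding $\|cf - \varphi_\varepsilon * f\|_{E'} \leq \int |\check{\varphi}_\varepsilon(y)| \, \|T_y f - f\|_{E'} \, dy \to 0$. For (i), the embedding $E'_* \to \SSS'^*$ is inherited from $E' \to \SSS'^*$, while $\SSS^* \to E'_*$ follows because translation is strongly continuous on $\SSS^*$ and the inclusion $\SSS^* \to E'$ is continuous, so any $\varphi \in \SSS^*$ lies in $E'_*$ by (iii). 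The Banach module structure over $L^1_{\check{\omega}}$ is inherited from $E'$, with invariance since $L^1_{\check{\omega}} * E'_* \subseteq L^1_{\check{\omega}} * E' = E'_*$.

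For (iv), the new input is a weak$^*$-to-norm passage via Mazur's theorem. Assume $E$ is reflexive, so $E'$ is reflexive as well. For $f \in E'$ and a mollifier $\varphi_\varepsilon$ as above, Theorem \ref{ttt11}(f) gives $\check{\varphi}_\varepsilon * g \to g$ in $E$ for every $g \in E$, whence $\langle \varphi_\varepsilon * f, g\rangle = \langle f, \check{\varphi}_\varepsilon * g\rangle \to \langle f, g\rangle$; thus $\varphi_\varepsilon * f \to f$ in the weak$^*$ topology of $E'$, which by reflexivity coincides with the weak topology. Mazur's theorem then yields convex combinations $u_n * f$ with $u_n \in L^1_{\check{\omega}}$ converging in norm to $f$, placing $f \in \overline{L^1_{\check{\omega}} * E'} = E'_*$; hence $E' = E'_*$. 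To complete (iv) one must still verify axiom (I) for $E'$: the continuous inclusions $\DD^* \to E'$ and $E' \to \DD'^*$ follow from part (i) and the ambient embeddings, while the density of $\DD^*$ in $E'$ I would obtain by first smoothing $f$ as $\varphi_\varepsilon * f \in C^\infty(\RR^d) \cap E'$ (using property (f) for $E'$), then multiplying by cutoffs $\chi_n \in \DD^*$ that equal $1$ on expanding balls. I expect the cutoff step to be the main obstacle: controlling $\|(1-\chi_n)(\varphi_\varepsilon * f)\|_{E'}$ does not follow directly from the axioms and will require exploiting the compact support of $\varphi_\varepsilon$ together with a dominated-convergence-type argument adapted to the Banach module $E'$.
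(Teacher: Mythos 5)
Your overall strategy is sound and, apart from one step, it fills in correctly what the paper itself merely delegates: the published proof consists of the single remark that everything except $\mathcal{S}^{\ast}(\RR^d)\subseteq E'_{\ast}$ goes ``exactly as in the distribution case'' of \cite[Sect.\ 3]{DPV}, plus a short argument for that one inclusion. Your treatment of $(iii)$ (strong continuity of translation on $L^{1}_{\check\omega}$ for one inclusion; the vector-valued mollifier estimate $\|\varphi_\varepsilon\ast f-f\|_{E'}\leq\int|\check\varphi_\varepsilon(y)|\,\|T_yf-f\|_{E'}\,dy$ together with the Cohen--Hewitt closedness of $E'_{\ast}$ for the other) is correct, as are $(ii)$ and the Mazur/weak-closedness argument for $E'_{\ast}=E'$ in $(iv)$. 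For $\mathcal{S}^{\ast}(\RR^d)\subseteq E'_{\ast}$ you take a genuinely different route from the paper: you deduce it from $(iii)$ and the strong continuity of $h\mapsto T_h\varphi$ in $\mathcal{S}^{\ast}(\RR^d)$, whereas the paper uses the factorization $\mathcal{S}^{\ast}=\overline{\operatorname{span}(\mathcal{S}^{\ast}\ast\mathcal{S}^{\ast})}$ together with $\mathcal{S}^{\ast}\ast\mathcal{S}^{\ast}\subseteq L^{1}_{\check\omega}\ast E'=E'_{\ast}$. Both work; yours needs the (true but not entirely free, especially in the Roumieu case) continuity of translation on $\mathcal{S}^{\ast}$, while the paper's only needs the already-established continuous inclusions.

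The one genuine gap is the density of $\mathcal{D}^{\ast}(\RR^d)$ in $E'$ needed to complete axiom (I) in $(iv)$, and you have correctly sensed that your proposed route will not close: multiplication by cutoffs $\chi_n$ is not an operation the axioms give you any control over in $E'$ (it is not even clear that $(1-\chi_n)(\varphi_\varepsilon\ast f)\in E'$, let alone that its norm tends to $0$), so this step as written would fail. The repair is much simpler and avoids cutoffs entirely: since $E$ is reflexive, $E''=E$, and any $g\in E$ annihilating $\mathcal{D}^{\ast}(\RR^d)\subseteq E'$ satisfies $\langle g,\varphi\rangle=0$ for all $\varphi\in\mathcal{D}^{\ast}(\RR^d)$ in the $\langle\mathcal{D}'^{\ast},\mathcal{D}^{\ast}\rangle$ pairing (this is how the inclusion $\mathcal{S}^{\ast}\to E'$ is defined, by transposing $E\to\mathcal{S}'^{\ast}$), hence $g=0$ in $\mathcal{D}'^{\ast}(\RR^d)$ and therefore in $E$; the Hahn--Banach theorem then gives the density of $\mathcal{D}^{\ast}(\RR^d)$ in $E'$. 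With that substitution the proof is complete.
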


\begin{proof} Except for the inclusion $\mathcal{S}^{\ast}(\RR^d)\subseteq E'_{\ast}$, the rest of the assertions can be proved in exactly the same way as for the distribution case; we therefore omit details and refer to \cite[Sect. 3]{DPV}. To show the inclusion $\mathcal{S}^{\ast}(\RR^d)\subseteq E'_{\ast}$, note that $\SSS^*(\RR^d)=\overline{\operatorname*{span} (\SSS^*(\RR^d) * \SSS^*(\RR^d))}$ where the closure is taken in $\mathcal{S}^{\ast}(\RR^d)$ (this follows because $\varphi*\delta_j\to \varphi$ in $\SSS^*(\RR^d)$, where the sequence $\{\delta_j\}_{j=1}^{\infty}$ can be taken as in the proof of Proposition \ref{S'}). Hence, $\SSS^*(\RR^d)$ is a subset of the closure of $\operatorname*{span} (\SSS^*(\RR^d) * \SSS^*(\RR^d))$ in $E'$, and so the inclusion  $\mathcal{S}^{\ast}(\RR^d)\subseteq E'_{\ast}$ must hold.
\end{proof}

It is worth noticing that $E'$ carries another useful convolution structure. In fact, we can define the convolution mapping $\ast:E'\times \check{E} \to L^{\infty}_{\omega}$ by $$(f*g)(x)=\langle f(t),g(x-t)\rangle=\langle f(t),T_{-x}\check{g}(t)\rangle,$$
where $\check{E}=\left\{g\in \mathcal{S}'^*(\mathbb{R}^{d})|\, \check{g}\in E\right\}$ with norm $\|g\|_{\check{E}}:=\|\check{g}\|_{E}$ and $L^{\infty}_{\omega}$ is the dual of the Beurling algebra $L^{1}_{\omega}$, namely, the $(B)$-space of all measurable functions satisfying $\|u\|_{\infty,\omega}=\operatorname*{ess}\sup_{x\in\mathbb{R}^{d}} |g(x)|/\omega(x)<\infty$. We consider the following two closed subspaces of $L^{\infty}_{\omega}$:
\begin{equation}
\label{UC}
UC_{\omega}= \left\{u\in L^{\infty}_{\omega}\big|\, \lim_{h\to0}\|T_{h}u-u\|_{\infty,\omega}=0 \right\}
\ \ \mbox{ and} \ \
C_{\omega}= \left\{u\in C(\mathbb{R}^{d})\Big|\, \lim_{|x|\to\infty}\frac{u(x)}{\omega(x)}=0 \right\}.
\end{equation}
The first part of the next proposition is a direct consequence of $(b)$ from Theorem \ref{ttt11}. The range refinement in the reflexive case follows from the density of $\SSS^{\ast}(\mathbb{R}^{d})$ in $E'$ (part $(iv)$ of Theorem \ref{thgoodspace}).

\begin{pro}\label{convolution E E'} We have that
$E'\ast \check{E} \subseteq UC_{\omega}$ and $\ast:E'\times \check{E} \to UC_{\omega}$ is continuous. If $E$ is reflexive, then
$E'\ast \check{E} \subseteq C_{\omega}$.
\end{pro}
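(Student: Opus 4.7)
I propose to derive both assertions directly from the identity $\|T_{-x}\|_{\mathcal{L}(E)} = \omega(x)$ together with property $(b)$ of Theorem \ref{ttt11} applied to $\check{g}\in E$. For $f\in E'$ and $g\in \check{E}$, the pointwise formula $(f*g)(x) = \langle f, T_{-x}\check{g}\rangle$ is meaningful since $T_{-x}\check{g}\in E$, and it yields
$$|(f*g)(x)| \leq \|f\|_{E'}\|T_{-x}\check{g}\|_E \leq \omega(x)\|f\|_{E'}\|g\|_{\check{E}},$$
hence $\|f*g\|_{\infty,\omega} \leq \|f\|_{E'}\|g\|_{\check{E}}$, which already furnishes joint continuity of $\ast\colon E'\times \check{E}\to L^\infty_\omega$.

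To promote the target to $UC_\omega$, I use $T_{-(x+h)} = T_{-x}T_{-h}$ to compute
$$(f*g)(x+h) - (f*g)(x) = \langle f, T_{-x}(T_{-h}\check{g} - \check{g})\rangle,$$
and dividing by $\omega(x)$ and taking the essential supremum in $x$ yields
$$\|T_h(f*g) - f*g\|_{\infty,\omega} \leq \|f\|_{E'}\|T_{-h}\check{g} - \check{g}\|_E.$$
By Theorem \ref{ttt11}(b) applied to $\check{g}\in E$, the right-hand side tends to $0$ as $h\to 0$. This proves $f*g \in UC_\omega$ and confirms continuity of $\ast\colon E'\times \check{E}\to UC_\omega$, settling the first claim.

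For the reflexive refinement, I propose a density argument through test functions. By Theorem \ref{thgoodspace}(iv), when $E$ is reflexive $E' = E'_\ast$ is itself a translation-invariant $(B)$-space of tempered ultradistributions, so axiom (I) of Definition \ref{def E} gives $\DD^\ast(\RR^d)$ densely embedded in $E'$. Analogously, $\DD^\ast(\RR^d)$ is dense in $\check{E}$, since reflection is an isometric isomorphism $E\to \check{E}$ that preserves $\DD^\ast(\RR^d)$ and $\DD^\ast(\RR^d)$ is dense in $E$ by (I). Given $f\in E'$ and $g\in \check{E}$, pick $\varphi_n,\psi_n\in \DD^\ast(\RR^d)$ with $\varphi_n\to f$ in $E'$ and $\psi_n\to g$ in $\check{E}$. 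Each $\varphi_n\ast\psi_n$ is smooth and compactly supported, hence trivially lies in $C_\omega$. By the joint continuity just established, $\varphi_n\ast\psi_n\to f\ast g$ in $L^\infty_\omega$; and $C_\omega$ is closed in $L^\infty_\omega$ since uniform convergence on compact sets preserves continuity (using that $\omega$ is locally bounded), while $\limsup_{|x|\to\infty}|u(x)|/\omega(x)\leq \|u-v\|_{\infty,\omega}$ for any $v\in C_\omega$. Therefore $f\ast g\in C_\omega$.

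The main technical hurdle is the translation/reflection bookkeeping — in particular keeping track of the identities $T_aT_b=T_{a+b}$, $\|T_{-x}\|_{\mathcal{L}(E)}=\omega(x)$, and the fact that $\|T_{-h}\check{g}-\check{g}\|_E\to 0$ as $h\to 0$ (which rests on $\check{g}\in E$ and the correct invocation of property $(b)$). Once these are clear, both parts reduce to elementary estimates and standard density.
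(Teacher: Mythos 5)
Your proof is correct and follows essentially the same route as the paper, which merely asserts that the first part is a direct consequence of property $(b)$ of Theorem \ref{ttt11} and that the reflexive refinement follows from the density of test functions in $E'$; your write-up is the natural fleshing-out of exactly these two points (the estimate $\|T_h(f*g)-f*g\|_{\infty,\omega}\leq\|f\|_{E'}\|T_{-h}\check{g}-\check{g}\|_{E}$, and approximation by $\varphi_n*\psi_n$ with $\varphi_n,\psi_n\in\mathcal{D}^{*}(\mathbb{R}^{d})$ together with closedness of $C_\omega$ in $L^{\infty}_{\omega}$).
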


%___________________________________________________________________________________
%
%___________________________________________________________________________________
\section{The test function space $\mathcal{D}^*_{E}$}\label{tspace}

In this section we define and study the test function space $\mathcal{D}^{\ast}_{E}$, whose construction is based on the $(B)$-space $E$.
Let $$\mathcal{D}_E^{\{M_p\},m}=\left\{\varphi\in E\Big|\, D^{\alpha}\varphi\in E, \forall \alpha\in\NN^d, \|\varphi\|_{E,m}=\sup_{\alpha\in\NN^d}\frac{m^\alpha \|D^\alpha\varphi\|_E}{M_\alpha}<\infty\right\}.$$ It is a $(B)$-space
 with the norm $\|\cdot\|_{E,m}$.
One easily verifies that none of these spaces is trivial; indeed, they contain $\mathcal{D}^{\ast}(\mathbb{R}^{d})$. Also, $\mathcal{D}_E^{\{M_p\},m_1}\subseteq\mathcal{D}_E^{\{M_p\},m_2}$ for $m_2<m_1$ with continuous inclusion mapping. As l.c.s. we define
$$\mathcal{D}_E^{(M_p)}=\lim_{\substack{\longleftarrow\\ m\rightarrow\infty}} \mathcal{D}_E^{\{M_p\},m},\,\,\,\mathcal{D}_E^{\{M_p\}}=\lim_{\substack{\longrightarrow\\ m\rightarrow 0}} \mathcal{D}_E^{\{M_p\},m}.$$
Since $\mathcal{D}_E^{\{M_p\},m}$ is continuously injected in $E$ for each $m>0$, $\mathcal{D}_E^{\{M_p\}}$ is indeed a (Hausdorff) l.c.s.. Moreover $\mathcal{D}_E^{\{M_p\}}$ is barreled, bornological $(DF)$-space as an inductive limit of $(B)$-spaces. Obviously, $\mathcal{D}_E^{(M_p)}$ is an $(F)$-space. Of course $\mathcal{D}_E^*$ is continuously injected into $E$.

Additionally, in the $\{M_p\}$ case, for each fixed $(r_p)\in\mathfrak{R}$ we define the $(B)$-space
\beqs
\mathcal{D}_E^{\{M_p\},(r_p)}=\left\{\varphi \in E|\, D^{\alpha}\varphi\in E, \forall\alpha\in\NN^d, \|\varphi\|_{E,(r_p)}=\sup_{\alpha}\frac{\|D^\alpha\varphi\|_E}{M_\alpha \prod_{j=1}^{|\alpha|}r_j }<\infty\right\},
\eeqs
with norm $\|\cdot\|_{E,(r_p)}$. Since for $k>0$ and $(r_p)\in\mathfrak{R}$, there exists $C>0$ such that $k^{|\alpha|}\geq C/\left(\prod_{j=1}^{|\alpha|}r_j\right)$, $\DD^{\{M_p\},k}_E$ is continuously injected into $\DD^{\{M_p\},(r_p)}_E$. Define as l.c.s. $\ds\tilde{\mathcal{D}}^{\{M_p\}}_E=\lim_{\substack{\longleftarrow\\ (r_p)\in \mathfrak{R}}}\mathcal{D}_E^{\{M_p\},(r_p)}$. Then $\tilde{\DD}^{\{M_p\}}_E$ is a complete l.c.s. and $\DD^{\{M_p\}}_E$ is continuously injected into it.

\begin{pro}\label{regular}
The space $\mathcal{D}_E^{\{M_p\}}$ is regular, namely, every bounded set $B$ in $\mathcal{D}_E^{\{M_p\}}$ is bounded in some $\mathcal{D}_E^{\{M_p\},m}$. In addition $\DD^{\{M_p\}}_E$ is complete.
\end{pro}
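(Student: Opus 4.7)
My approach is to derive both claims from the continuous injection $\DD^{\{M_p\}}_E \hookrightarrow \tilde{\DD}^{\{M_p\}}_E$ (noted just before the statement of the proposition), combined with a classical ``extraction'' argument of the type familiar from Komatsu's theory.

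For regularity, I proceed by contradiction. Suppose a set $B$ is bounded in $\DD^{\{M_p\}}_E$ but not bounded in any $\DD_E^{\{M_p\},m}$. Then for every $n \in \ZZ_+$ one can select $\varphi_n \in B$ and $\alpha_n \in \NN^d$ such that $n^{-|\alpha_n|}\|D^{\alpha_n}\varphi_n\|_E / M_{|\alpha_n|} \geq n^n$. A routine application of $(M.2)$ yields that every $D^\beta$ acts continuously on $\DD^{\{M_p\}}_E$, so for each fixed $\alpha$ the set $\{D^\alpha \varphi : \varphi \in B\}$ is bounded in $E$; this forces $|\alpha_n| \to \infty$, as otherwise the norms $\|D^{\alpha_n}\varphi_n\|_E$ would have to diverge along a finite range of multi-indices. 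Setting $n(p):=\min\{n : |\alpha_n|\geq p\}$ (which tends to infinity with $p$) and $r_p := \sqrt{n(p)}$ produces a sequence $(r_p) \in \mathfrak{R}$ for which $\prod_{j=1}^{|\alpha_n|} r_j \leq n^{|\alpha_n|/2}$, so that
\begin{equation*}
\|\varphi_n\|_{E,(r_p)} \geq \frac{\|D^{\alpha_n}\varphi_n\|_E}{M_{|\alpha_n|}\prod_{j=1}^{|\alpha_n|}r_j} \geq n^{\,n+|\alpha_n|/2} \longrightarrow \infty,
\end{equation*}
contradicting the boundedness of $B$ in $\tilde{\DD}^{\{M_p\}}_E$.

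For completeness, let $\{\varphi_\lambda\}$ be a Cauchy net in $\DD^{\{M_p\}}_E$. A Cauchy filter in a locally convex space always contains a bounded element, so we may assume the net is bounded; regularity then gives $m>0$ and $C>0$ with $\sup_\lambda\|\varphi_\lambda\|_{E,m} \leq C$. Continuity of $\DD^{\{M_p\}}_E \hookrightarrow E$ and of each $D^\alpha$ on $\DD^{\{M_p\}}_E$ shows that $\{D^\alpha \varphi_\lambda\}_\lambda$ is Cauchy in $E$ for every $\alpha$; completeness of $E$ yields limits which, via $E \hookrightarrow \DD'^{\{M_p\}}(\RR^d)$, coincide with $D^\alpha \varphi$ for a common limit $\varphi$. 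Passing to the limit in the uniform estimate gives $\varphi \in \DD_E^{\{M_p\},m}$ with $\|\varphi\|_{E,m} \leq C$.

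It remains to upgrade this to $\varphi_\lambda\to\varphi$ in $\DD^{\{M_p\}}_E$, which I would do by showing convergence in $\DD_E^{\{M_p\},m'}$ for any fixed $m'<m$ via a cut-off argument: split $\|\varphi_\lambda-\varphi\|_{E,m'}$ at an index $|\alpha|=N$. The tail $|\alpha|>N$ is majorized by $(m'/m)^N \cdot 2C$ using the uniform bound, hence is $<\varepsilon/2$ for $N$ large, while the head $|\alpha|\leq N$ is a finite supremum of quantities tending to zero thanks to $D^\alpha\varphi_\lambda\to D^\alpha\varphi$ in $E$. I expect the main technical obstacle to be the regularity step: the construction of $(r_p)\in\mathfrak{R}$ must simultaneously respect the monotone-to-infinity constraint and force $\prod_{j\leq|\alpha_n|} r_j$ to grow strictly slower than $n^{|\alpha_n|}$, which is what makes the block-wise definition $r_p=\sqrt{n(p)}$ the correct choice.
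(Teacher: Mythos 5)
Your regularity argument is correct, and it is in effect a self-contained proof of the extraction lemma that the paper simply cites: the paper observes that a set $B$ bounded in $\DD^{\{M_p\}}_E$ is bounded in $\tilde{\DD}^{\{M_p\}}_E$, i.e.\ $\sup_{\varphi\in B}\|\varphi\|_{E,(r_p)}<\infty$ for every $(r_p)\in\mathfrak{R}$, and then invokes \cite[Lem.~3.4]{Komatsu3} to convert this family of bounds into a single bound $\sup_{\varphi\in B}\|\varphi\|_{E,m}<\infty$. Your block-wise choice $r_p=\sqrt{n(p)}$ is exactly the standard proof of that lemma, and the details check out: $n(j)\le n$ whenever $j\le|\alpha_n|$, so $\prod_{j=1}^{|\alpha_n|}r_j\le n^{|\alpha_n|/2}$, and the selection $\|D^{\alpha_n}\varphi_n\|_E/M_{\alpha_n}\ge n^{n+|\alpha_n|}$ then forces $\|\varphi_n\|_{E,(r_p)}\to\infty$. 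So this half is fine, merely more elementary and self-contained than the paper's one-line appeal to Komatsu.

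The completeness half has one genuine gap: the assertion that ``a Cauchy filter in a locally convex space always contains a bounded element'' is false. In any non-normable locally convex space the neighbourhood filter of $0$ is a Cauchy filter (given $U$, choose $V$ with $V-V\subseteq U$; $V$ is in the filter), yet it contains no bounded set because no neighbourhood of $0$ is bounded. Since $\DD^{\{M_p\}}_E$ is a proper $(LB)$-space, you cannot reduce to bounded Cauchy nets by this route, and the reduction is precisely the nontrivial point: quasi-completeness does not imply completeness for general locally convex spaces. The correct fix, and the one the paper uses, is structural: $\DD^{\{M_p\}}_E$ is a $(DF)$-space as a countable inductive limit of $(B)$-spaces, and a quasi-complete $(DF)$-space is complete \cite[Thm.~3, p.~402]{kothe1}, so it does suffice to treat bounded Cauchy nets. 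Once that reduction is in place, the remainder of your argument is correct and essentially the paper's: regularity gives $\sup_\lambda\|\varphi_\lambda\|_{E,m}\le C$, the limits of $D^\alpha\varphi_\lambda$ in $E$ are identified through $\DD'^{\{M_p\}}(\RR^d)$ as $D^\alpha\varphi$, the uniform bound survives the limit, and the head/tail split at $|\alpha|=N$ yields convergence in $\DD^{\{M_p\},m'}_E$ for any $m'<m$. The only (harmless) difference in this last step is that you control the finitely many terms with $|\alpha|\le N$ by the already established convergence $D^\alpha\varphi_\lambda\to D^\alpha\varphi$ in $E$, whereas the paper uses the Cauchy property of the net in $\DD^{\{M_p\},(r_p)}_E$ with $r_p=p$; both are valid.
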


\begin{proof} For $(r_p)\in\mathfrak{R}$ denote by $R_{\alpha}$ the product $\prod_{j=1}^{|\alpha|}r_j$. Let $B$ be a bounded set in $\mathcal{D}_E^{\{M_p\}}$. Then $B$ is bounded in $\tilde{\mathcal{D}}^{\{M_p\}}_E$; hence, for each $(r_p)\in \mathfrak{R}$ there exists $C_{(r_p)}>0$ such that $\ds\sup_{\alpha}\frac{\|D^\alpha\varphi\|_E}{R_\alpha M_\alpha}\leq C_{(r_p)}$, for all $\varphi\in B$. By \cite[Lem. 3.4]{Komatsu3} we obtain that there exist $m,C_2>0$ such that $\ds\sup_{\alpha}\frac{m^{|\alpha|}\|D^\alpha\varphi\|_E}{ M_\alpha}\leq C_2$, $\forall\varphi\in B$, which proves the regularity of $\DD^{\{M_p\}}_E$.\\
\indent It remains to prove the completeness. Since $\DD^{\{M_p\}}_E$ is a $(DF)$-space it is enough to prove that it is quasi-complete (see \cite[Thm. 3, p. 402]{kothe1}). Let $\varphi_{\nu}$ be a bounded Cauchy net in $\DD^{\{M_p\}}_E$. Hence there exist $m,C>0$ such that $\|\varphi_{\nu}\|_{E,m}\leq C$ and since the inclusions $\DD^{\{M_p\}}_E\rightarrow \DD^{\{M_p\},(r_p)}_E$ are continuous it follows that $\varphi_{\nu}$ is a Cauchy net in $\DD^{\{M_p\},(r_p)}_E$ for each $(r_p)\in\mathfrak{R}$. It is obvious that without losing generality we can assume that $m\leq 1$. Fix $m_1<m$. Let $\varepsilon>0$. There exists $p_0\in\ZZ_+$ such that $(m_1/m)^p\leq \varepsilon/(2C)$ for all $p\geq p_0$, $p\in\NN$. Let $r_p=p$. Obviously $(r_p)\in\mathfrak{R}$. Since $\varphi_{\nu}$ is a Cauchy net in $\DD^{\{M_p\},(r_p)}_E$, there exists $\nu_0$ such that for all $\nu,\lambda\geq \nu_0$ we have $\|\varphi_{\nu}-\varphi_{\lambda}\|_{E,(r_p)}\leq \varepsilon/(p_0!)$. Hence, for $|\alpha|<p_0$
\beqs
\frac{m_1^{|\alpha|}\|D^{\alpha}\varphi_{\nu}-D^{\alpha}\varphi_{\lambda}\|_E}{M_{\alpha}} \leq\frac{\|D^{\alpha}\varphi_{\nu}-D^{\alpha}\varphi_{\lambda}\|_E}{M_{\alpha}}\leq \varepsilon
\eeqs
and for $|\alpha|\geq p_0$
\beqs
\frac{m_1^{|\alpha|}\|D^{\alpha}\varphi_{\nu}-D^{\alpha}\varphi_{\lambda}\|_E}{M_{\alpha}}\leq 2C\left(\frac{m_1}{m}\right)^{|\alpha|}\leq \varepsilon.
\eeqs
We obtain that for $\nu,\lambda\geq \nu_0$, $\|\varphi_{\nu}-\varphi_{\lambda}\|_{E,m_1}\leq \varepsilon$, i.e., $\varphi_{\nu}$ is a Cauchy net in the $(B)$-space $\DD^{\{M_p\},m_1}_E$; hence, it converges to $\varphi\in\DD^{\{M_p\},m_1}_E$ in it and thus also in $\DD^{\{M_p\}}_E$.
\end{proof}

Similarly as in the first part of the proof of this proposition one can prove, by using \cite[Lem. 3.4]{Komatsu3}, that $\DD^{\{M_p\}}_E$ and $\tilde{\DD}^{\{M_p\}}_E$ are equal as sets, i.e., the canonical inclusion $\DD^{\{M_p\}}_E\rightarrow \tilde{\DD}^{\{M_p\}}_E$ is surjective. We will actually show later (cf. Theorem \ref{1517}) that the equality $\tilde{\DD}^{\{M_p\}}_E=\DD^{\{M_p\}}_E$ also holds topologically; however we need to study intrinsic properties of their duals in Section \ref{subsection DE} in order to reach such a result.

\begin{pro}\label{970}
The following dense inclusions hold $\mathcal{S}^*(\mathbb{R}^d)\hookrightarrow\mathcal{D}^{*}_E\hookrightarrow E\hookrightarrow \mathcal{S}'^*(\mathbb{R}^d)$ and $\mathcal{D}^{*}_E$ is a topological module over the Beurling algebra
$L^{1}_{\omega}$, that is, the convolution $*:L^1_{\omega}\times\DD^*_E\rightarrow \DD^*_E$ is continuous. Moreover in the $(M_p)$ case the following estimate
\begin{equation}\label{eq5}
\|u\ast \varphi\|_{E,m}\leq \|u\|_{1,\omega}\| \varphi\|_{E,m},\,\,\, m>0
\end{equation}
holds. In the $\{M_p\}$ case, for each $m>0$ the convolution is also a continuous bilinear mapping $L^1_{\omega}\times\DD^{\{M_p\},m}_E\rightarrow \DD^{\{M_p\},m}_E$ and the inequality (\ref{eq5}) holds.
\end{pro}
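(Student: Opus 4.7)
The plan is to establish the proposition in three steps: the chain of dense continuous embeddings, the module inequality (\ref{eq5}), and the joint continuity of the convolution.

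First I address the embeddings. The inclusion $E\hookrightarrow \SSS'^*(\RR^d)$ is Theorem \ref{ttt11}(a), while $\DD^*_E\hookrightarrow E$ is continuous by the trivial estimate $\|\varphi\|_E\leq \|\varphi\|_{E,m}$ and dense because $\DD^*(\RR^d)\subseteq \DD^*_E\subseteq E$ with $\DD^*(\RR^d)$ already dense in $E$ by axiom (I). The substantive inclusion is $\SSS^*(\RR^d)\hookrightarrow\DD^*_E$. Since $\SSS^*(\RR^d)\hookrightarrow E$ is continuous by Theorem \ref{ttt11}(a), there exists $m_0>0$ (every sufficiently small $m_0>0$ in the $\{M_p\}$ case) with $\|g\|_E\leq C\sigma_{m_0}(g)$ for $g\in\SSS^*(\RR^d)$. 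Applied to $D^\alpha\varphi$, the inequality $\|D^\alpha\varphi\|_E\leq C\sigma_{m_0}(D^\alpha\varphi)$ combined with the pointwise bound of $\sigma_{m_0}(D^\alpha\varphi)$ in terms of $\sigma_{m_1}(\varphi)$ and the $(M.2)$ estimate $M_{\alpha+\beta}/M_\beta\leq c_0H^{|\alpha|+|\beta|}M_\alpha$ yields, after choosing $m_1\geq Hm$ in the $(M_p)$ case, the bound
\[
\|\varphi\|_{E,m}\leq C'\sigma_{m_1}(\varphi),
\]
so the inclusion $\SSS^{(M_p)}(\RR^d)\to\DD^{(M_p)}_E$ is continuous. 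The $\{M_p\}$ case is analogous at the Banach-step level: $\SSS^{\{M_p\},m_1}_\infty\to\DD^{\{M_p\},m_1/H}_E$ continuously, and both sides being inductive limits then gives continuity of the limit map.

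Next, the estimate (\ref{eq5}) is one line: since $D^\alpha(u*\varphi)=u*D^\alpha\varphi$ and Theorem \ref{ttt11}(e) gives $\|u*D^\alpha\varphi\|_E\leq \|u\|_{1,\omega}\|D^\alpha\varphi\|_E$, dividing by $M_\alpha/m^{|\alpha|}$ and taking the supremum over $\alpha$ produces $\|u*\varphi\|_{E,m}\leq \|u\|_{1,\omega}\|\varphi\|_{E,m}$. From (\ref{eq5}), joint continuity of $*:L^1_\omega\times\DD^{(M_p)}_E\to\DD^{(M_p)}_E$ is immediate since the target topology is generated by countably many norms each satisfying a product bound. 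In the $\{M_p\}$ case, (\ref{eq5}) delivers joint continuity on every Banach step $L^1_\omega\times \DD^{\{M_p\},m}_E\to \DD^{\{M_p\},m}_E$; lifting to the inductive limit uses that $L^1_\omega$ is metrizable and $\DD^{\{M_p\}}_E$ is bornological and regular (Proposition \ref{regular}), so that a bilinear map that is separately continuous from a metrizable space times a bornological space into a locally convex space is automatically jointly continuous.

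The main obstacle I expect is the density of $\SSS^*(\RR^d)$ in $\DD^*_E$, since $\DD^*_E$ is not itself a canonical function space and density at this refined level is delicate. I would prove the stronger statement that $\DD^*(\RR^d)$ is dense in $\DD^*_E$, through a mollify-then-truncate procedure. Given $\varphi\in\DD^*_E$ and the approximate identity $\psi_\varepsilon\in\DD^*(\RR^d)$ furnished by Theorem \ref{ttt11}(f), one has $\psi_\varepsilon*\varphi\in \DD^*_E\cap\EE^*(\RR^d)$ by (\ref{eq5}), and $D^\alpha(\psi_\varepsilon*\varphi-\varphi)=\psi_\varepsilon*D^\alpha\varphi-D^\alpha\varphi\to 0$ in $E$ for each fixed $\alpha$ by Theorem \ref{ttt11}(f). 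The crucial step is upgrading to uniform convergence in $\alpha$ with the weights $m^{|\alpha|}/M_\alpha$: split the supremum at a large $N$, bound the tail $|\alpha|>N$ by $(1+\|\psi_\varepsilon\|_{1,\omega})\sup_{|\alpha|>N}m^{|\alpha|}\|D^\alpha\varphi\|_E/M_\alpha$ using that $\|\psi_\varepsilon\|_{1,\omega}$ stays bounded as $\varepsilon\to 0$ (because $\psi_\varepsilon$ has small support and $\omega$ is locally bounded by Theorem \ref{ttt11}(c)) together with the fact that the $\alpha$-tail of the finite quantity $\|\varphi\|_{E,m}$ shrinks after a mild enlargement of the weight $m$, and handle the finite head termwise. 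A subsequent truncation via $\chi_N\cdot(\psi_\varepsilon*\varphi)$ with $\chi_N(x)=\chi(x/N)$, exploiting the rescaled bound $\|D^\beta\chi_N\|_\infty\leq C(hN)^{-|\beta|}M_\beta$ against the controlled growth of the smooth mollification through a Leibniz expansion, produces approximations in $\DD^*(\RR^d)$ and closes the density argument.
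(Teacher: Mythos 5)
Your treatment of the embeddings and of the module estimate (\ref{eq5}) is essentially the paper's argument and is fine, with one caveat: the general principle you invoke to get joint continuity in the Roumieu case --- that a separately continuous bilinear map from (metrizable) $\times$ (bornological) into a l.c.s.\ is jointly continuous --- is false (the evaluation pairing $F\times F'_{b}\to\CC$ for a distinguished non-normable Fr\'echet space $F$ is a counterexample). The conclusion is nevertheless available: $L^{1}_{\omega}$ is a Banach space, hence a barreled $(DF)$-space, and so is $\DD^{\{M_p\}}_{E}$, so separate continuity implies continuity by the K\"othe theorem on bilinear maps between barreled $(DF)$-spaces; this is the route the paper takes.

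The genuine gap is in your density argument, specifically the truncation step. After mollifying you need $\chi_{N}\cdot(\psi_{\varepsilon}*\varphi)\to\psi_{\varepsilon}*\varphi$ in the $\DD^{*}_{E}$ topology, and the Leibniz expansion leaves you with the principal term $(1-\chi_{N})D^{\alpha}(\psi_{\varepsilon}*\varphi)$, which must tend to $0$ in $E$-norm, uniformly in the weighted sense over $\alpha$. For an abstract translation-invariant $E$ satisfying only (I)--(III) there is no mechanism for this: nothing in the axioms or in Theorem \ref{ttt11} guarantees that multiplication by a bounded cutoff even maps $E$ into $E$, let alone that $(1-\chi_{N})g\to 0$ in $E$ for $g\in E$ (no ``tightness'' or vanishing-at-infinity property is available; indeed Proposition \ref{smooth prop} shows elements of $\DD^{*}_{E}$ may grow like $\check{\omega}$, so the tail is not controlled by any $\SSS^{*}$-seminorm either). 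The paper avoids truncation entirely: having shown $\varphi*\phi_{j}\to\varphi$ in some $\DD^{\{M_p\},m_{1}}_{E}$ (your mollification half, which matches theirs), it fixes $j_{1}$, picks $\psi\in\SSS^{*}(\RR^{d})$ close to $\varphi$ merely in the $E$-norm (possible by axiom (I)), and then uses the identity $D^{\alpha}\bigl((\varphi-\psi)*\phi_{j_{1}}\bigr)=(\varphi-\psi)*D^{\alpha}\phi_{j_{1}}$ together with $\|g*u\|_{E}\le\|g\|_{E}\|u\|_{1,\omega}$ to throw all derivatives onto the fixed mollifier; choosing the new weight $m_{2}<m_{1}/j_{1}$ absorbs the growth of $\|D^{\alpha}\phi_{j_{1}}\|_{1,\omega}$. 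This upgrades $E$-closeness to $\DD^{\{M_p\},m_{2}}_{E}$-closeness and yields an approximant $\psi*\phi_{j_{1}}\in\SSS^{*}(\RR^{d})$ without ever multiplying by cutoffs. You should replace your truncation step by this convolution trick (or restrict your truncation argument to concrete spaces such as $L^{p}_{\eta}$, where dominated convergence supplies the missing tail estimate, which however would not prove the proposition in the stated generality).
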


\begin{proof} Clearly $\mathcal{D}_{E}^*$ is continuously injected into $E$. We will consider the $\{M_p\}$ case. We will prove that for every $h>0$, $\SSS^{\{M_p\},h}_{\infty}(\RR^d)$ is continuously injected into $\DD^{\{M_p\},h/H}_E$. From this it readily follows that $\SSS^{\{M_p\}}(\RR^d)$ is continuously injected into $\DD^{\{M_p\}}_E$. Denote by $\sigma_h$ the norm in $\SSS^{\{M_p\},h}_{\infty}(\RR^d)$ (see (\ref{pcnnk})). Since $\SSS^{\{M_p\}}(\RR^d)\rightarrow E$, it follows that $\SSS^{\{M_p\},h/H}_{\infty}(\RR^d)\rightarrow E$. Hence there exists $C_1>0$ such that $\|\varphi\|_E\leq C_1 \sigma_{h/H}(\varphi)$, $\forall\varphi\in \SSS^{\{M_p\},h/H}_{\infty}(\RR^d)$. Let $\psi\in \SSS^{\{M_p\},h}_{\infty}(\RR^d)$. It is easy to verify that for every $\beta\in\NN^d$, $D^{\beta}\psi\in\SSS^{\{M_p\},h/H}_{\infty}(\RR^d)$. We have
\beqs
\frac{h^{|\alpha|}\left\|D^{\alpha}\psi\right\|_E}{H^{|\alpha|}M_{\alpha}}&\leq& C_1\frac{h^{|\alpha|}}{H^{|\alpha|}M_{\alpha}} \sup_{\beta}\frac{h^{|\beta|}\left\|e^{M(\frac{h}{H}|\cdot|)}D^{\alpha+\beta}\psi\right\|_{L^{\infty}(\RR^d)}} {H^{|\beta|}M_{\beta}}\\
&\leq& c_0C_1 \sup_{\beta}\frac{h^{|\alpha|+|\beta|}\left\|e^{M(h|\cdot|)}D^{\alpha+\beta}\psi\right\|_{L^{\infty}(\RR^d)}} {M_{\alpha+\beta}}\leq c_0C_1 \sigma_h(\psi),
\eeqs
which proves the continuity of the inclusion $\SSS^{\{M_p\},h}_{\infty}(\RR^d)\rightarrow \DD^{\{M_p\},h/H}_E$. The proof that $\SSS^{(M_p)}(\RR^d)$ is continuously injected into $\DD^{(M_p)}_E$ is similar and we omit it. We have shown that $\mathcal{S}^*(\mathbb{R}^d)\rightarrow\mathcal{D}_{E}^*\hookrightarrow E\hookrightarrow \mathcal{S}'^*(\mathbb{R}^d)$. To prove that $\mathcal{D}_{E}^*$ is a module over the Beurling algebra $L^{1}_{\omega}$ we first consider the $(M_p)$ case. For $u\in \DD^{(M_p)}\left(\RR^d\right)$, $\varphi\in \DD^{(M_p)}_E$ and $m>0$ we have
\beqs
\frac{m^{|\gamma|}}{M_\gamma}\left\|D^\gamma(u*\varphi)\right\|_E= \left\|u*\frac{m^{|\gamma|}}{M_\gamma}D^\gamma\varphi\right\|_E\leq \|u\|_{1,\omega}\|\varphi\|_{E,m}.
\eeqs
By a density argument, the same inequality holds true for $u\in L^1_{\omega}$ and $\varphi\in \DD^{(M_p)}_E$. After taking that supremum over $\gamma\in\NN^d$, we obtain (\ref{eq5}). In the $\{M_p\}$ case, by a similar calculation as above, we again obtain (\ref{eq5}) for $\varphi\in \DD^{\{M_p\},m}_E$ and $u\in L^1_{\omega}$. Hence the convolution is a continuous bilinear mapping $L^1_{\omega}\times \DD^{\{M_p\},m}_E\rightarrow \DD^{\{M_p\},m}_E$. From this we obtain that the convolution is separately continuous mapping $L^1_{\omega}\times \DD^{\{M_p\}}_E\rightarrow \DD^{\{M_p\}}_E$ and since $L^1_{\omega}$ and $\DD^{\{M_p\}}_E$ are barreled $(DF)$-spaces, it follows that it is continuous \cite[Thm. 11, p.161]{kothe1}.

 It remains to prove the density of the injection $\mathcal{S}^*(\mathbb{R}^d)\hookrightarrow\mathcal{D}_E^*$. Let $\varphi\in\mathcal{D}_{E}^*$. Pick then $\phi\in\mathcal{D}^*(\mathbb{R}^{d})$ with support in the unit ball of $\RR^d$ with center at the origin such that $\phi(x)\geq 0$ and $\int_{\mathbb{R}^{d}}\phi(x)dx=1$ and set $\phi_{j}(x)=j^{d}\phi(j x)$. We only consider the $\{M_p\}$ case, the $(M_p)$ case is similar. There exists $m>0$ such that $\phi,\varphi\in \DD^{\{M_p\},m}_E$ and $\left|D^{\alpha}\phi(x)\right|\leq \tilde{C} M_{\alpha}/m^{|\alpha|}$, for some $\tilde{C}>0$. Let $0<m_1<m$ be arbitrary but fixed. We will prove that $\|\varphi-\varphi\ast \phi_{j}\|_{E,m_1}\rightarrow 0$. Let $\varepsilon>0$. Observe that there exists $C_1\geq1$ such that $\|\phi_j\|_{1,\omega}\leq C_1$, $\forall j\in\ZZ_+$ and $\|\phi\|_{1,\omega}\leq C_1$. Choose $p_0\in\ZZ_+$ such that $(m_1/m)^p\leq \varepsilon/(2C_2)$ for all $p\geq p_0$, $p\in\NN$, where $C_2=C_1(1+\|\varphi\|_{E,m})\geq 1$. By $(f)$ of Theorem \ref{ttt11} we can choose $j_0\in\ZZ_+$ such that $\ds\frac{m_1^{|\alpha|}}{M_{\alpha}}\left\|D^{\alpha}\varphi-D^{\alpha}\varphi\ast \phi_{j}\right\|_E\leq \varepsilon$ for all $|\alpha|\leq p_0$ and all $j\geq j_0$, $j\in\NN$. Observe that if $|\alpha|\geq p_0$ we have
\beqs
\frac{m_1^{|\alpha|}}{M_{\alpha}}\left\|D^{\alpha}\varphi-D^{\alpha}\varphi\ast \phi_{j}\right\|_E&\leq& \frac{m_1^{|\alpha|}}{M_{\alpha}}\left\|D^{\alpha}\varphi\right\|_E+\frac{m_1^{|\alpha|}}{M_{\alpha}} \left\|D^{\alpha}\varphi\right\|_E\|\phi_{j}\|_{1,\omega}\\
&\leq& \left(\frac{m_1}{m}\right)^{|\alpha|}\|\varphi\|_{E,m}+C_1\left(\frac{m_1}{m}\right)^{|\alpha|}\|\varphi\|_{E,m}\leq \varepsilon.
\eeqs
Hence, for $j\geq j_0$, $\|\varphi-\varphi\ast \phi_{j}\|_{E,m_1}\leq \varepsilon$, so $\varphi\ast \phi_{j}\rightarrow \varphi$ in $\DD^{\{M_p\},m_1}_E$ and consequently also in $\DD^{\{M_p\}}_E$. Let $V$ be a neighborhood of $0$ in $\DD^{\{M_p\}}_E$. Choose a neighborhood of $0$ in $\DD^{\{M_p\}}_E$ such that $W+W\subseteq V$. Then $W_{m_1}=W\cap \DD^{\{M_p\},m_1}_E$ is a neighborhood of $0$ in $\DD^{\{M_p\},m_1}_E$; hence, there exists $j_1\in \ZZ_+$ such that $\varphi\ast \phi_{j_1}-\varphi \in W_{m_1}\subseteq W$. Choose $m_2>0$ such that $m_2<m_1/j_1$. Then $W_{m_2}=W\cap \DD^{\{M_p\},m_2}_E$ is a neighborhood of $0$ in $\DD^{\{M_p\},m_2}_E$. So there exists $\varepsilon>0$ such that $\left\{\chi\in\DD^{\{M_p\},m_2}_E \Big|\, \|\chi\|_{E,m_2}\leq \varepsilon\right\}\subseteq W_{m_2}$. Since $j_1m_2<m$, $\left|D^{\alpha}\phi(x)\right|\leq \tilde{C} M_{\alpha}/(j_1m_2)^{|\alpha|}$. Pick $\psi\in \SSS^{\{M_p\}}$ such that $\|\varphi-\psi\|_E\leq \varepsilon/(\tilde{C}C')$ where $\ds C'=\sup_{j\in\ZZ_+}\int_{|x|\leq 1} \omega(x/j)dx$ which is finite by the growth estimate for $\omega$. Now we have
\beqs
\frac{m_2^{|\alpha|}}{M_{\alpha}} \left\|(\varphi-\psi)*D^{\alpha}\phi_{j_1}\right\|_E&\leq& \|\varphi-\psi\|_E\int_{\mathbb{R}^{d}} \frac{j_1^d(j_1m_2)^{|\alpha|}}{M_{\alpha}}\left|D^{\alpha}\phi(j_1x)\right|\omega(x)d x\\
&\leq& \tilde{C}\|\varphi-\psi\|_E\int_{|x|\leq 1} \omega(x/j_1)d x\leq \varepsilon.
\eeqs
We obtain that $\psi*\phi_{j_1}-\varphi*\phi_{j_1}\in W_{m_2}\subseteq W$. Hence $\psi*\phi_{j_1}-\varphi=\psi*\phi_{j_1}-\varphi*\phi_{j_1}+\varphi*\phi_{j_1}-\varphi\in W+W\subseteq V$. Since $\psi*\phi_j\in \SSS^{\{M_p\}}(\RR^d)$ we conclude that $\SSS^{\{M_p\}}(\RR^d)$ is dense in $\DD^{\{M_p\}}_E$.
\end{proof}

Let $P(D)$ be an ultradifferential operator of $*$ type. Via standard arguments, one can prove  that $P(D):\mathcal{D}^*_E\rightarrow \mathcal{D}^*_E$ is continuous.

In order to prove that  ultradifferential operators of class $\{M_p\}$ act continuously on $\tilde{\DD}^{\{M_p\}}_E$, we need the following technical result \cite[Lem. 2.3]{BojanL}:
Let $(k_p)\in\mathfrak{R}$. There exists $(k'_p)\in\mathfrak{R}$ such that $k'_p\leq k_p$ and
\begin{equation}\label{bp}
\ds\prod_{j=1}^{p+q}k'_j\leq 2^{p+q}\prod_{j=1}^{p}k'_j\cdot\prod_{j=1}^{q}k'_j, \mbox{ for all } p,q\in\ZZ_+.
\end{equation}

\begin{pro}\label{udozc}
Every ultradifferential operator of class $\{M_p\}$  acts continuously on $\tilde{\DD}^{\{M_p\}}_E$.
\end{pro}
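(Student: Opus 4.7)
The plan is to show that, for every $(r_p)\in\mathfrak{R}$, one can find $(s_p)\in\mathfrak{R}$ and $C>0$ with
$$\|P(D)\varphi\|_{E,(r_p)}\leq C\|\varphi\|_{E,(s_p)}\quad\text{for all }\varphi\in\tilde{\DD}^{\{M_p\}}_E;$$
by the projective description of the topology on $\tilde{\DD}^{\{M_p\}}_E$, this will deliver the continuity. Writing $P(D)=\sum_\beta c_\beta D^\beta$ and applying, in order, the bound $\|D^{\alpha+\beta}\varphi\|_E\leq M_{|\alpha|+|\beta|}S_{|\alpha|+|\beta|}\|\varphi\|_{E,(s_p)}$, the estimate $M_{|\alpha|+|\beta|}\leq c_0 H^{|\alpha|+|\beta|}M_\alpha M_\beta$ from $(M.2)$, the inequality (\ref{bp}) imposed on $(s_p)$, and the elementary bound $S_\alpha\leq R_\alpha/(2H)^{|\alpha|}$ (valid once $s_p\leq r_p/(2H)$), I would collapse the $\alpha$-dependent factor $H^{|\alpha|+|\beta|}S_{|\alpha|+|\beta|}/R_\alpha$ into $(2H)^{|\beta|}S_\beta$. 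The required estimate then reduces to the $\alpha$-free bound
$$\sum_\beta |c_\beta|\,M_\beta\,(2H)^{|\beta|}S_\beta<\infty.$$

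The hard part will be to select $(s_p)\in\mathfrak{R}$ fulfilling the three requirements simultaneously: $s_p\leq r_p/(2H)$, the property (\ref{bp}), and the above summability. The Roumieu nature of $P(D)$ forces $\tilde a_p:=\sum_{|\alpha|=p}|c_\alpha|M_\alpha$ to satisfy $\tilde a_p^{1/p}\to 0$, since $|c_\alpha|M_\alpha\leq C_L L^{|\alpha|}$ for every $L>0$. I would form the nonincreasing envelope $\varepsilon_p:=\sup_{q\geq p}\max(\tilde a_q^{1/q},\,1/q)$, so that $\varepsilon_p\searrow 0$, $\varepsilon_p\geq 1/p$, and $\tilde a_p\leq\varepsilon_p^{\,p}$. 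Putting $u_p:=\min\!\bigl(1/(4H\varepsilon_p),\, r_p/(2H)\bigr)$ yields $(u_p)\in\mathfrak{R}$, as both arguments of the minimum are monotonically increasing to infinity, and applying (\ref{bp}) to $(u_p)$ produces $(s_p)\in\mathfrak{R}$ with $s_p\leq u_p$ satisfying (\ref{bp}). The monotonicity $\varepsilon_j\geq\varepsilon_p$ for $j\leq p$ then gives
$$(2H)^p S_p\leq \prod_{j=1}^{p}(2Hu_j)\leq \prod_{j=1}^{p}\frac{1}{2\varepsilon_j}\leq \frac{1}{(2\varepsilon_p)^{p}},$$
hence $\tilde a_p\,(2H)^p S_p\leq 2^{-p}$. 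Summation over $p$ produces the required finiteness, which completes the argument and establishes the continuity of $P(D)$ on $\tilde{\DD}^{\{M_p\}}_E$.
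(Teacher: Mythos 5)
Your argument is correct and follows essentially the same route as the paper: the same splitting of $M_{|\alpha|+|\beta|}\prod_{j\le|\alpha|+|\beta|}s_j$ via $(M.2)$ and (\ref{bp}), combined with a $2H$- (resp.\ $4H$-) rescaling of the auxiliary sequence to absorb the $\alpha$-dependent factors and leave a summable series over the coefficient index. The only difference is that where the paper invokes Lemma 3.4 of \cite{Komatsu3} to convert the Roumieu bound $|c_\beta|M_\beta\leq C_LL^{|\beta|}$ into control by a single sequence from $\mathfrak{R}$, you build that sequence by hand via the envelope $\varepsilon_p$ --- a self-contained but equivalent step.
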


\begin{proof} Since $P(D)=\sum_{\alpha} c_{\alpha} D^{\alpha}$ is of class $\{M_p\}$ for every $L>0$ there exists $C>0$ such that $\left|c_{\alpha}\right|\leq C L^{|\alpha|}/M_{\alpha}$. Now, \cite[Lem. 3.4]{Komatsu3} implies that there exist $(r_p)\in\mathfrak{R}$ and $C_1>0$ such that $\left|c_{\alpha}\right|\leq C_1 /\left(M_{\alpha}\prod_{j=1}^{|\alpha|}r_j\right)$. Let $(l_p)\in\mathfrak{R}$ be arbitrary but fixed. Define $k_p=\min\{r_p,l_p\}$, $p\in\ZZ_+$.
Then $(k_p)\in\mathfrak{R}$ and for this $(k_p)$ take $(k'_p)\in\mathfrak{R}$ as in (\ref{bp}). Then, there exists $C'>0$ such that $\left\|P(D)\varphi\right\|_{E,(l_p)}\leq C' \|\varphi\|_{E, (k'_p/(4H))}$ for $\varphi\in \tilde{\DD}^{\{M_p\}}_E$, which implies the continuity of $P(D)$. Indeed, for all $\beta\in\NN^d$,
\beqs
\frac{\left\|D^{\beta}P(D)\varphi\right\|_E}{M_{\beta}\prod_{j=1}^{|\beta|}l_j}&\leq& C\sum_{\alpha}\frac{\left\|D^{\alpha+\beta}\varphi\right\|_E}{M_{\alpha}M_{\beta}\prod_{j=1}^{|\alpha|}r_j\prod_{j=1}^{|\beta|}l_j} \leq C_1\sum_{\alpha}\frac{H^{|\alpha|+|\beta|}|\left\|D^{\alpha+\beta}\varphi\right\|_E} {M_{\alpha+\beta}\prod_{j=1}^{|\alpha|}k'_j\prod_{j=1}^{|\beta|}k'_j}\\
&\leq& C_2\|\varphi\|_{(k'_p/(4H))}\sum_{\alpha}2^{-|\alpha|}\leq C'\|\varphi\|_{(k'_p/(4H))}.
\eeqs
\end{proof}

Interestingly, all elements of our test space $\mathcal{D}^*_{E}$ are ultradifferentiable functions of class $*$. To establish this fact we need the following lemma.

\begin{lemma}\label{lemma4.2}
Let $K\subseteq \mathbb{R}^d$ be compact. There exists $m>0$ (there exists $(l_p)\in \mathfrak{R}$) such that $\mathcal{D}_{K,m}^{\{M_p\}}\subseteq E\cap E'_{\ast}$ ($\mathcal{D}_{K,(l_p)}^{\{M_p\}}\subseteq E\cap E'_{\ast}$). Moreover, the inclusion mappings $\mathcal{D}_{K,m}^{\{M_p\}}\to E$ and $\mathcal{D}_{K,m}^{\{M_p\}}\to E'_{*}$ ($\mathcal{D}_{K,(l_p)}^{\{M_p\}}\to E$ and $\mathcal{D}_{K,(l_p)}^{\{M_p\}}\to E'_{\ast}$) are continuous.
\end{lemma}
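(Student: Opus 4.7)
The plan is to prove the lemma as a Banach-space-level refinement of axiom~(I) (and its $E'_\ast$-counterpart from Theorem~\ref{thgoodspace}$(i)$), bridging the gap between the continuity of $\DD^\ast(\RR^d)\hookrightarrow E$ and the single Banach piece $\DD^{\{M_p\}}_{K,m}$ via a tailored mollification/density argument.

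\emph{Factorisation.} Enlarge to $K''=K+\overline{B(0,1)}$. In the Beurling case, axiom~(I) restricts to a continuous map $\DD^{(M_p)}_{K''}\to E$; as $\DD^{(M_p)}_{K''}=\lim_{\substack{\leftarrow\\ h\to 0}}\DD^{\{M_p\}}_{K'',h}$ is Fr\'echet, this continuity yields $m_0>0$ and $C>0$ with $\|\varphi\|_E\leq C\|\varphi\|_{K'',m_0}$ for every $\varphi\in\DD^{(M_p)}_{K''}$. In the Roumieu case, axiom~(I) gives continuity of $\DD^{\{M_p\}}_{K''}\to E$; combined with the Komatsu presentation $\DD^{\{M_p\}}_{K''}=\lim_{\substack{\leftarrow\\ (r_j)\in\mathfrak R}}\DD^{\{M_p\}}_{K'',(r_j)}$ and the directedness of $\mathfrak R$ under componentwise minimum, the same factoring argument delivers $(l_p)\in\mathfrak R$ and $C>0$ with $\|\varphi\|_E\leq C\|\varphi\|_{K'',(l_p)}$. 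Using $\SSS^\ast(\RR^d)\to E'_\ast$ from Theorem~\ref{thgoodspace}$(i)$, analogous inequalities hold for $E'_\ast$ with new parameters $m_0'$, resp.\ $(l_p^\ast)$.

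\emph{Mollification and density.} For the Beurling statement set $m=\tfrac12\min\{m_0,m_0'\}$ and pick a nonnegative Beurling mollifier family $\phi_\varepsilon\in\DD^{(M_p)}_{B(0,\varepsilon)}$ with $\int\phi_\varepsilon=1$. Given $\varphi\in\DD^{\{M_p\}}_{K,m}$, the convolutions $\varphi_\varepsilon:=\varphi\ast\phi_\varepsilon$ lie in $\DD^{(M_p)}_{K''}$ for $\varepsilon<1$. The decisive estimate
\beqs
\frac{\|D^\alpha(\varphi_\varepsilon-\varphi)\|_{L^\infty}}{m_0^{|\alpha|}M_\alpha}\leq 2\|\varphi\|_{K,m}\left(\frac{m}{m_0}\right)^{|\alpha|}
\eeqs
is geometrically small in $|\alpha|$, while for each fixed $\alpha$ uniform continuity of $D^\alpha\varphi$ (compactly supported in $K$) forces $\|D^\alpha(\varphi_\varepsilon-\varphi)\|_{L^\infty}\to 0$; splitting the supremum over $\alpha$ at a large finite index then yields $\|\varphi_\varepsilon-\varphi\|_{K'',m_0}\to 0$. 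For the Roumieu statement, select $(l_p')\in\mathfrak R$ with $l_p'\leq\min\{l_p,l_p^\ast\}$ and both $l_p'/l_p\to 0$ and $l_p'/l_p^\ast\to 0$ (for instance $l_p'=\sqrt{\min\{l_p,l_p^\ast\}}$); then $\prod_{j=1}^{|\alpha|}(l_j'/l_j)\to 0$ plays the role of the geometric factor and one obtains $\|\varphi_\varepsilon-\varphi\|_{K'',(l_p)}\to 0$ for every $\varphi\in\DD^{\{M_p\}}_{K,(l_p')}$.

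\emph{Completeness and identification.} The factorisation inequality applied to each $\varphi_\varepsilon$ combined with the preceding norm convergence shows that $\{\varphi_\varepsilon\}$ is Cauchy in $E$; completeness yields a limit $g\in E$. Since $E\hookrightarrow\DD'^\ast(\RR^d)$ by~(I), $\varphi_\varepsilon\to g$ in $\DD'^\ast$, but uniform convergence also gives $\varphi_\varepsilon\to\varphi$ in $\DD'^\ast$, forcing $g=\varphi$. Passing to the limit in $\|\varphi_\varepsilon\|_E\leq C\|\varphi_\varepsilon\|_{K'',m_0}$ and using $\operatorname{supp}\varphi\subseteq K$ together with $m\leq m_0$ delivers $\|\varphi\|_E\leq C\|\varphi\|_{K,m}$, the desired continuous embedding $\DD^{\{M_p\}}_{K,m}\hookrightarrow E$. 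Running the identical argument with $E$ replaced by $E'_\ast$ (and using $E'_\ast\hookrightarrow\SSS'^\ast$ from Section~\ref{UTIB}) produces the companion inclusion $\DD^{\{M_p\}}_{K,m}\hookrightarrow E'_\ast$. The principal technical obstacle is the tail estimate in the mollification step: the strict inequality $m<m_0$ (resp.\ $l_p'/l_p\to 0$) is indispensable for turning pointwise-in-$\alpha$ convergence of the mollifications into convergence in the stronger $\|\cdot\|_{K'',m_0}$ (resp.\ $\|\cdot\|_{K'',(l_p)}$) norm; without this gap Cauchyness in $E$ fails and the whole density/completeness pathway breaks down.
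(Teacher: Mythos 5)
Your proof is correct and follows essentially the same route as the paper's: enlarge the compact set, factor the continuity of $\mathcal{D}^{\ast}_{K''}\to E$ (resp.\ $\to E'_{\ast}$) through a single Banach step of the projective description of $\mathcal{D}^{\ast}_{K''}$, introduce a strictly smaller parameter to create a geometric gap, and use mollification together with completeness of $E$ and the injection $E\hookrightarrow\mathcal{D}'^{\ast}(\mathbb{R}^d)$ to transfer the estimate to $\mathcal{D}^{\{M_p\}}_{K,m}$ (resp.\ $\mathcal{D}^{\{M_p\}}_{K,(l_p)}$), finally taking a minimum of the parameters for $E$ and $E'_{\ast}$. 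The only (immaterial) difference is that you obtain the convergence of the mollifications in the stronger norm via a crude tail bound plus termwise uniform convergence, whereas the paper uses a Taylor-expansion estimate.
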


\begin{proof} We will give the proof in the Roumieu case; the Beurling case is similar. Let $U$ be a bounded open subset of $\RR^d$ such that $K\Subset U$ and set $K_1=\overline{U}$. Since the inclusion $\DD^{\{M_p\}}_{K_1}\rightarrow E$ is continuous and $\ds \DD^{\{M_p\}}_{K_1}=\lim_{\substack{\longleftarrow\\ (r_p)\in\mathfrak{R}}}\DD^{\{M_p\}}_{K_1,(r_p)}$ there exist $C>0$ and $(r_p)\in\mathfrak{R}$ such that $\|\varphi\|_E\leq C \|\varphi\|_{K_1,(r_p)}$. Let $\chi_m$, $m\in\ZZ_+$, be a $\delta$-sequence from $\DD^{\{M_p\}}$ such that $\mathrm{diam}(\mathrm{supp}\,\chi_m)\leq \mathrm{dist}(K,\partial U)/2$, for $m\in\ZZ_+$. Take $l_p= r_{p-1}/(2H)$, $p\geq 2$ and $l_1=r_1/(2H)$. Then $(l_p)\in\mathfrak{R}$. Let $\psi\in \DD^{\{M_p\}}_{K,(l_p)}$. Then $\psi*\chi_m\in \DD^{\{M_p\}}_{K_1}$ and one easily obtains that $\psi*\chi_m\rightarrow \psi$ in $\DD^{\{M_p\}}_{K_1,(r_p)}$. We have $\|\psi*\chi_m\|_E\leq C \|\psi*\chi_m\|_{K_1,(r_p)}$; hence,  $\psi*\chi_m$ is a Cauchy sequence in $E$, so it converges. Since $\psi*\chi_m\rightarrow \psi$ in $\DD'^{\{M_p\}}(\RR^d)$ and $E$ is continuously injected into $\DD'^{\{M_p\}}(\RR^d)$ the limit of $\psi*\chi_m$ in $E$ must be $\psi$. If we let $m\rightarrow\infty$ in the last inequality we have $\|\psi\|_E\leq C\|\psi\|_{K_1,(r_p)}$. Observe that $\|\psi\|_{K_1,(r_p)}\leq \|\psi\|_{K,(l_p)}$ (since $\psi\in\DD^{\{M_p\}}_{K,(l_p)}$, $\mathrm{supp}\, \psi\subseteq K$). Hence, $\|\psi\|_E\leq C\|\psi\|_{K,(l_p)}$, which gives the desired continuity of the inclusion $\mathcal{D}_{K,(l_p)}^{\{M_p\}}\to E$. Similarly, one obtains the continuous inclusion $\mathcal{D}_{K,(l'_p)}^{\{M_p\}}\to E'_{\ast}$ possibly with another $(l'_p)\in\mathfrak{R}$. The conclusion of the lemma now follows with $(\tilde{l}_p)\in\mathfrak{R}$ defined as $\tilde{l}_p=\min\{l_p,l'_p\}$, $p\in\ZZ_+$.
\end{proof}

\begin{pro}\label{smooth prop}
The embedding $\mathcal{D}^*_{E}\hookrightarrow \mathcal{O}^*_{C}(\mathbb{R}^{d})$ holds. Furthermore, for $\varphi\in\mathcal{D}^*_{E}$, $D^{\alpha}\varphi\in C_{\check{\omega}}$ for all $\alpha\in\NN^d$ and they satisfy the following growth condition: for every $m>0$ (for some $m>0$)
\begin{equation}\label{eqgrowth}
\sup_{\alpha\in\NN^d}\frac{m^{|\alpha|}}{M_{\alpha}} \left\|D^{\alpha}\varphi\right\|_{L^{\infty}_{\check{\omega}}{(\RR^d)}}<\infty.
\end{equation}
\end{pro}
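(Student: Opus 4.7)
The approach is to represent $\varphi$ through Komatsu's parametrix so that each derivative $D^{\alpha}\varphi$ becomes a sum of convolutions of compactly supported smooth functions with elements of $E$, and then extract pointwise bounds via the duality between $E$ and $E'$. Fix a symmetric compact neighborhood $K$ of the origin. Lemma \ref{lemma4.2} furnishes an index $m>0$ (resp.\ a sequence $(l_p)\in\mathfrak{R}$) for which $\mathcal{D}^{\{M_p\}}_{K,m}\hookrightarrow E\cap E'_{\ast}$ (resp.\ $\mathcal{D}^{\{M_p\}}_{K,(l_p)}\hookrightarrow E\cap E'_{\ast}$). Apply Komatsu's parametrix with this index (in the Roumieu case, shrink the parametrix sequence $(r_j)$ so that $r_j\leq l_j$) to obtain $u,\psi$ supported in $K$ and an ultradifferential operator $P(D)$ of class $\ast$ with $P(D)u=\delta+\psi$. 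Then $u,\psi$ and, because $K=-K$, also $\check u,\check\psi$ all lie in $E\cap E'_{\ast}$.

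Since $\varphi\in E\hookrightarrow\mathcal{S}'^{\ast}$ and $u,\psi\in\mathcal{D}^{\ast}$, the identity
$$D^{\alpha}\varphi=u\ast\bigl(P(D)D^{\alpha}\varphi\bigr)-\psi\ast D^{\alpha}\varphi$$
holds in $\mathcal{D}'^{\ast}$, and its right-hand side is a genuine smooth function. I would estimate each term pointwise via the formula $(v\ast g)(x)=\langle g,T_{-x}\check v\rangle$, where the duality is the $E,E'$ pairing (consistent with the distributional pairing under the inclusions $\mathcal{D}^{\ast}\hookrightarrow E'$ and $E\hookrightarrow\mathcal{S}'^{\ast}$); this yields
$$|(v\ast g)(x)|\leq \|g\|_{E}\|T_{-x}\check v\|_{E'}\leq \omega(-x)\|g\|_{E}\|\check v\|_{E'}.$$
Applying this twice gives
$$|D^{\alpha}\varphi(x)|\leq \omega(-x)\bigl(\|\check u\|_{E'}\|D^{\alpha}P(D)\varphi\|_{E}+\|\check\psi\|_{E'}\|D^{\alpha}\varphi\|_{E}\bigr).$$
Because $P(D)$ acts continuously on $\mathcal{D}^{\ast}_{E}$, the factors $\|D^{\alpha}P(D)\varphi\|_{E}$ and $\|D^{\alpha}\varphi\|_{E}$ are dominated by $C\,M_{\alpha}/m^{|\alpha|}$, for every $m>0$ in the Beurling case and for some $m>0$ in the Roumieu case; this proves (\ref{eqgrowth}).

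For $D^{\alpha}\varphi\in C_{\check\omega}$ I would use that $\mathcal{S}^{\ast}(\RR^d)$ is dense in $\mathcal{D}^{\ast}_{E}$ (Proposition \ref{970}): pick $\varphi_n\in\mathcal{S}^{\ast}(\RR^d)$ converging to $\varphi$. The estimate just established, applied to $\varphi-\varphi_n$, shows $D^{\alpha}\varphi_n\to D^{\alpha}\varphi$ in $L^{\infty}_{\check\omega}$. Since $D^{\alpha}\varphi_n\in\mathcal{S}^{\ast}\subseteq C_{\check\omega}$ and $C_{\check\omega}$ is norm-closed in $L^{\infty}_{\check\omega}$, the conclusion $D^{\alpha}\varphi\in C_{\check\omega}$ follows.

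Finally, for the embedding $\mathcal{D}^{\ast}_{E}\hookrightarrow\mathcal{O}^{\ast}_{C}(\RR^d)$, I integrate the pointwise bound against $e^{-2M(h|\cdot|)}$ to obtain $\|D^{\alpha}\varphi\, e^{-M(h|\cdot|)}\|_{L^{2}}\leq C(M_{\alpha}/m^{|\alpha|})\|\check\omega\, e^{-M(h|\cdot|)}\|_{L^{2}}$. Using property $(c)$ (or $(\tilde c)$) of Theorem \ref{ttt11} together with the standard inequality $2M(\tau\rho)\leq M(H\tau\rho)+\log c_{0}$, the latter $L^{2}$-norm is finite as soon as $h\geq H\tau$, which can be arranged for every $h>0$ in the Roumieu case and for some sufficiently large $h$ in the Beurling case. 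Summing the squares over $\alpha$ with $m$ replaced by something slightly smaller than the one in the preceding paragraph makes $\sum_{\alpha}m^{2|\alpha|}\|D^{\alpha}\varphi\, e^{-M(h|\cdot|)}\|_{L^{2}}^{2}/M_{\alpha}^{2}$ converge geometrically, placing $\varphi$ in $\mathcal{O}^{\ast}_{C,h}$ with the $m$ prescribed by the $\ast$-convention. Continuity of the embedding follows because all estimates are linear in the $\mathcal{D}^{\ast}_{E}$-norm of $\varphi$. I expect the main obstacle to be the index bookkeeping in the Roumieu case -- simultaneously matching the parametrix sequence to the one from Lemma \ref{lemma4.2} and arranging the $L^{2}$-integrability for every prescribed $h>0$ -- but all the ingredients for handling it come from results already established in the preceding sections.
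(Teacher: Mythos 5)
Your proposal is correct and follows essentially the same route as the paper: the parametrix decomposition $D^{\alpha}\varphi=u\ast D^{\alpha}P(D)\varphi-\psi\ast D^{\alpha}\varphi$ combined with Lemma \ref{lemma4.2}, the $E$--$E'$ duality bound $|(v\ast g)(x)|\leq\omega(-x)\|g\|_{E}\|\check v\|_{E'}$, continuity of $P(D)$ on $\mathcal{D}^{\ast}_{E}$, and integration against $e^{-2M(h|\cdot|)}$ with the same Beurling/Roumieu quantifier matching. The only (harmless) deviation is that you obtain $D^{\alpha}\varphi\in C_{\check\omega}$ by approximating from $\mathcal{S}^{\ast}(\RR^d)$ and using closedness of $C_{\check\omega}$ in $L^{\infty}_{\check\omega}$, whereas the paper first lands in $UC_{\check{\omega}}$ via Proposition \ref{convolution E E'} and then uses a compact-support approximation.
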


\begin{proof} Let $U$ be the open unit ball in $\RR^d$ with center at $0$ and let $K=\overline{U}$. Let $r>0$ (let $(r_p)\in\mathfrak{R}$) be as in Lemma \ref{lemma4.2}, i.e., $\mathcal{D}_{K,r}^{\{M_p\}}\subseteq E\cap E'_{\ast}$ ($\mathcal{D}_{K,(r_p)}^{\{M_p\}}\subseteq E\cap E'_{\ast}$) and the inclusion mappings $\mathcal{D}_{K,r}^{\{M_p\}}\to E$ and $\mathcal{D}_{K,r}^{\{M_p\}}\to E'_{*}$ ($\mathcal{D}_{K,(r_p)}^{\{M_p\}}\to E$ and
$\mathcal{D}_{K,(r_p)}^{\{M_p\}}\to E'_{\ast}$) are continuous. By the parametrix of Komatsu, there exist $u\in\DD^{(M_p)}_{U,r}$,
$\psi\in \DD^{(M_p)}(U)$ and $P(D)$ of type $(M_p)$ ($u\in\DD^{\{M_p\}}_{U,(r_p)}$ satisfying $\ds \frac{\left\|D^{\alpha}u\right\|_{L^{\infty}}}{R_{\alpha}M_{\alpha}}\rightarrow 0$ when $|\alpha|\rightarrow\infty$, $\psi\in \DD^{\{M_p\}}(U)$ and
$P(D)$ of type $\{M_p\}$) such that $P(D)u=\delta+\psi$. Let $f\in\mathcal{D}^*_{E}$. Then $f=u*P(D)f-\psi*f$. Observe that $\psi*f\in\EE^*(\RR^d)$. For $\beta\in\NN^d$, $D^{\beta}P(D)f\in \DD_E^*$. By Lemma \ref{lemma4.2},
$\check{u}\in\mathcal{D}_{K,(r_p)}^{\{M_p\}}\subseteq E'$ and so $u\in (E')\check{}=\check{E}'$. Hence, by the discussion before Proposition \ref{convolution E E'}, all ultradistributional derivatives of $u*P(D)f$ are continuous functions on $\RR^d$. From this we obtain that $u*P(D)f\in C^{\infty}\left(\RR^d\right)$.
Indeed, this result is of local nature, so it is enough to use the Sobolev embedding theorem on an open disk $V$ of arbitrary point
$x\in\RR^d$ and the fact that $\DD^*(V)$ is dense in $\DD(V)$. Hence $f\in C^{\infty}\left(\RR^d\right)$. For $\beta\in\NN^d$,
$D^{\beta}f(x)=u*D^{\beta}P(D)f(x)-\psi*D^{\beta}f(x)=F_1(x)-F_2(x)$. By the above discussion, the last equality, and
Proposition \ref{convolution E E'}, it follows that $D^{\beta}f\in UC_{\check{\omega}}$. To prove the inclusion
$\mathcal{D}^*_{E}\rightarrow \mathcal{O}^*_{C}(\mathbb{R}^{d})$, we consider first the $(M_p)$ case. Let $m>0$ be
arbitrary but fixed. Since $P(D)=\sum_{\alpha}c_{\alpha}D^{\alpha}$ is of $(M_p)$ type, there exist $m_1,C'>0$
such that $|c_{\alpha}|\leq C'm_1^{|\alpha|}/M_{\alpha}$. Let $m_2=4\max\{m,m_1\}$. For $F_1$, since $P(D)$ acts continuously on $\mathcal{D}^*_E$, we have
\beqs
|F_1(x)|\leq \|u\|_{\check{E}'}\left\|D^{\beta}P(D)f(x)\right\|_E\omega(-x)\leq C_2\omega(-x)\|\check{u}\|_{E'}\|f\|_{E,m_2H}\frac{M_{\beta}}{(2m)^{|\beta|}}
\eeqs
and similarly
\beqs
|F_2(x)|\leq C_3\omega(-x)\|\check{\psi}\|_{E'}\|f\|_{E,2m}\frac{M_{\beta}}{(2m)^{|\beta|}}\leq C_3\omega(-x)\|\check{\psi}\|_{E'}\|f\|_{E,m_2H}\frac{M_{\beta}}{(2m)^{|\beta|}}.
\eeqs
Hence
\begin{equation}\label{bounds}
\frac{(2m)^{|\beta|}\left|D^{\beta}f(x)\right|}{M_{\beta}w(-x)}\leq C''\left(\|\check{u}\|_{E'}+\|\check{\psi}\|_{E'}\right)\|f\|_{E,m_2H}.
\end{equation}
Since there exist $\tau,C'''>0$ such that $\omega(x)\leq C''' e^{M(\tau|x|)}$, by using \cite[Prop. 3.6]{Komatsu1}, we obtain $\omega(-x)e^{M(\tau|x|)}\leq C_4 e^{M(\tau H|x|)}$. Hence
\beqs
\left(\sum_{\alpha}\frac{m^{2|\alpha|}}{M_{\alpha}^2}\left\|D^{\alpha}f e^{-M(\tau H|\cdot|)}\right\|^2_{L^2}\right)^{1/2}&\leq& C_5\left(\sum_{\alpha}\frac{m^{2|\alpha|}}{M_{\alpha}^2}\left\|\frac{D^{\alpha}f} {\omega(-\cdot)}\right\|^2_{L^{\infty}}\right)^{1/2}\\
&\leq& C\left(\|\check{u}\|_{E'}+\|\check{\psi}\|_{E'}\right)\|f\|_{E,m_2H},
\eeqs
which proves the continuity of the inclusion $\mathcal{D}^{(M_p)}_{E}\rightarrow \mathcal{O}_{C,\tau H}^{(M_p)}(\RR^d)$ and hence also the continuity of the inclusion $\mathcal{D}^{(M_p)}_{E}\rightarrow \mathcal{O}_C^{(M_p)}(\RR^d)$.

 In order to prove that the inclusion $\mathcal{D}^{\{M_p\}}_{E}\rightarrow \mathcal{O}_C^{\{M_p\}}(\RR^d)$ is continuous it is enough to prove that, for each $h>0$, $\mathcal{D}^{\{M_p\}}_{E}\rightarrow \mathcal{O}_{C,h}^{\{M_p\}}(\RR^d)$ is a continuous inclusion. And in order to prove this, it is enough to prove that for every $m>0$ there exists $m'>0$ such that we have the continuous inclusion $\mathcal{D}^{\{M_p\},m}_{E}\rightarrow \mathcal{O}_{C,m',h}^{\{M_p\}}(\RR^d)$. So, let $h,m>0$ be arbitrary but fixed. Take $m'\leq m/(4H)$. For $f\in \mathcal{D}^{\{M_p\},m}_{E}$, using the same technique as above, we have
\beq\label{boundss}
\frac{(2m')^{|\beta|}\left|D^{\beta}f(x)\right|}{M_{\beta}w(-x)}\leq C''\left(\|\check{u}\|_{E'}+\|\check{\psi}\|_{E'}\right)\|f\|_{E,m}.
\eeq
For the fixed $h$ take $\tau>0$ such that $\tau H\leq h$. Then there exists $C'''>0$ such that $\omega(x)\leq C''' e^{M(\tau|x|)}$ and by using \cite[Prop. 3.6]{Komatsu1} we obtain $\omega(x)e^{M(\tau|x|)}\leq C_4 e^{M(\tau H|x|)}$. Similarly as above, we have
\beqs
\left(\sum_{\alpha}\frac{m'^{2|\alpha|}}{M_{\alpha}^2}\left\|D^{\alpha}f e^{-M(h|\cdot|)}\right\|^2_{L^2}\right)^{1/2} \leq C\left(\|\check{u}\|_{E'}+\|\check{\psi}\|_{E'}\right)\|f\|_{E,m},
\eeqs
which proves the continuity of the inclusion $\mathcal{D}^{\{M_p\},m}_{E}\rightarrow \mathcal{O}_{C,m',h}^{\{M_p\}}(\RR^d)$.

 Observe that (\ref{eqgrowth}) follows by (\ref{bounds}) in the $(M_p)$ case and by (\ref{boundss}) in the $\{M_p\}$ case. It remains to prove that $D^{\alpha}f\in C_{\check{\omega}}$. We will prove this in the $\{M_p\}$ case, the $(M_p)$ case is similar. By using Proposition \ref{udozc}, with similar technique to that above, one can prove that for every $(k_p)\in\mathfrak{R}$ there exists $(l_p)\in\mathfrak{R}$ such that for $f\in\DD^{\{M_p\}}_E$ we have
\beq\label{15557}
\frac{\left|D^{\beta}f(x)\right|}{w(-x)M_{\beta}\prod_{j=1}^{|\beta|}k_j}\leq C''\left(\|\check{u}\|_{E'}+\|\check{\psi}\|_{E'}\right)\|f\|_{E,(l_p)}.
\eeq
Let $\varepsilon>0$. Since $\DD^{\{M_p\}}(\RR^d)$ is dense in $\DD^{\{M_p\}}_E$ (Proposition \ref{970}) it is dense in $\tilde{\DD}^{\{M_p\}}_E$. Pick $\chi\in\DD^{\{M_p\}}(\RR^d)$ such that $\|f-\chi\|_{E,(l_p)}\leq \varepsilon/\left(C''\left(\|\check{u}\|_{E'}+\|\check{\psi}\|_{E'}\right)\right)$. Then, by (\ref{15557}), for $x\in\RR^d\backslash \mathrm{supp}\,\chi$ we have
\beqs
\frac{\left|D^{\beta}f(x)\right|}{w(-x)M_{\beta}\prod_{j=1}^{|\beta|}k_j}= \frac{\left|D^{\beta}\left(f(x)-\chi(x)\right)\right|}{w(-x)M_{\beta}\prod_{j=1}^{|\beta|}k_j}\leq \varepsilon,
\eeqs
which proves that $D^{\beta}f\in C_{\check{\omega}}$.
\end{proof}

\begin{re} If $f\in\SSS^*(\RR^d)$, by the proof of the previous proposition (and (\ref{eqwconvolution1})), we have
\beqs
\left\|D^{\beta}f\right\|_E\leq \|u\|_E\left\|D^{\beta}P(D)f\right\|_{1,\omega}+\|\psi\|_E\left\|D^{\beta}f\right\|_{1,\omega},
\eeqs
since $u,\psi\in E$ (by their choice). Also, one easily verifies that (cf. the proof of Proposition \ref{udozc}) for every $m>0$ there exist $\tilde{m}>0$ and $C_1>0$ (for every $(k_p)\in \mathfrak{R}$ there exist $(l_p)\in\mathfrak{R}$ and $C_1>0$) such that
\beq\label{111111177}
\|f\|_{E,m}\leq C_1 \sup_{\alpha}\frac{\tilde{m}^{|\alpha|}\left\|D^{\alpha}f\right\|_{1,\omega}}{M_{\alpha}}\,\,\, \left(\|f\|_{E,(k_p)}\leq C_1 \sup_{\alpha}\frac{\left\|D^{\alpha}f\right\|_{1,\omega}}{M_{\alpha}\prod_{j=1}^{|\alpha|}l_j}\right).
\eeq
\end{re}

\section{The ultradistribution space $\mathcal{D}'^*_{E'_{\ast}}$}\label{subsection DE}

We denote by $\mathcal{D}'^*_{E'_{\ast}}$ the strong dual of $\mathcal{D}^*_{E}$. Then, $\mathcal{D}'^{(M_p)}_{E'_{\ast}}$ is a complete $(DF)$-space since $\mathcal{D}^{(M_p)}_{E}$ is an $(F)$-space. Also, $\mathcal{D}'^{\{M_p\}}_{E'_{\ast}}$ is an $(F)$-space as the strong dual of a $(DF)$-space. When $E$ is reflexive, we write $\mathcal{D}'^*_{E'}=\mathcal{D}'^*_{E'_{\ast}}$ in accordance with the last assertion of Theorem \ref{thgoodspace}. The notation $\mathcal{D}'^*_{E'_{\ast}}=(\mathcal{D}^*_{E})'$ is motivated by the next structural theorem.

\begin{te}\label{karak}
Let $f\in\mathcal{D}'^*(\mathbb{R}^d)$. The following statements are equivalent:
\begin{itemize}
\item [$(i)$] $f\in \mathcal{D}'^*_{E'_{\ast}}$.
\item [$(ii)$] $f*\psi\in E'$ for all $\psi\in \mathcal{D}^*(\mathbb{R}^d)$.
\item [$(iii)$] $f*\psi\in E'_{\ast}$ for all $\psi\in \mathcal{D}^*(\mathbb{R}^d)$.
\item [$(iv)$] $f$ can be expressed as $f=P(D)g+g_1$, where $P(D)$ is an ultradifferential operator of $\ast$ type  with $g,g_1\in E'$.
\item [$(v)$] There exist ultradifferential operators  $P_{k}(D)$ of $*$ type and $f_{k}\in E'_{\ast}\cap UC_{\omega}$ for $k$ in a finite set $J$ such that
\begin{equation}\label{eq:representation}
 f=\sum_{k\in J}P_k(D)f_k.
\end{equation}
Moreover, if $E$ is reflexive, then we may choose $f_k\in E'\cap C_{\omega}$.
\end{itemize}
\end{te}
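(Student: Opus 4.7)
The plan is to prove the cycle (iv) $\Rightarrow$ (i) $\Rightarrow$ (ii) $\Rightarrow$ (iv) and then derive (iii) and (v) from (iv), with the remaining arrows (iii) $\Rightarrow$ (ii) and (v) $\Rightarrow$ (i) being essentially trivial. The workhorse throughout is Komatsu's parametrix (the unnumbered lemma in the preliminaries), applied once to close the main cycle and a second time to upgrade (iv) to the sharper representation (v). The implications (iv) $\Rightarrow$ (i) and (v) $\Rightarrow$ (i) are one-line duality computations: for $f = P(D)g + g_1$ with $g, g_1 \in E'$ and $\varphi \in \mathcal{D}^*_E$, one has $\langle f, \varphi \rangle = \langle g, \tilde{P}(D)\varphi \rangle + \langle g_1, \varphi \rangle$ with $\tilde{P}(D)$ of the same $*$-class as $P(D)$, and the continuity of $\tilde{P}(D)$ on $\mathcal{D}^*_E$ (remarked after Proposition \ref{970}) together with the continuous embedding $\mathcal{D}^*_E \hookrightarrow E$ gives continuity of $f$ on $\mathcal{D}^*_E$.

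For (i) $\Rightarrow$ (ii) I would fix $\psi \in \mathcal{D}^*(\mathbb{R}^d)$ and use $\langle f*\psi, \varphi \rangle = \langle f, \check{\psi}*\varphi \rangle$ on $\varphi \in \mathcal{D}^*(\mathbb{R}^d)$. The estimate $\|D^{\alpha}(\check{\psi}*\varphi)\|_E \leq \|D^{\alpha}\check{\psi}\|_{1,\omega}\|\varphi\|_E$ (from the $L^1_\omega$-module structure, cf.\ (\ref{eq5})) combined with the ultradifferentiable bounds on $\check{\psi}$ shows that $\varphi \mapsto \check{\psi}*\varphi$ maps $E$ boundedly into some $\mathcal{D}^{*,m}_E$, with $m > 0$ chosen small enough that $\sup_{\alpha} m^{|\alpha|}\|D^{\alpha}\check{\psi}\|_{1,\omega}/M_{\alpha} < \infty$ (in the Beurling case any $m$ works; in the Roumieu case one picks $m$ below the parameter controlling $\check{\psi}$). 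Thus $|\langle f*\psi, \varphi \rangle| \leq C\|\varphi\|_E$ on $\mathcal{D}^*(\mathbb{R}^d)$, and density of $\mathcal{D}^*(\mathbb{R}^d)$ in $E$ (Theorem \ref{ttt11}) extends $f*\psi$ to an element of $E'$. The reverse (ii) $\Rightarrow$ (iv) is immediate from Komatsu's parametrix: choose $u, \chi \in \mathcal{D}^*(\mathbb{R}^d)$ and an ultradifferential operator $P(D)$ of $*$-type with $P(D)u = \delta + \chi$; then $f = f*\delta = P(D)(f*u) - f*\chi$, and assumption (ii) places both $f*u$ and $f*\chi$ in $E'$.

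Given (iv), I would prove (iii) by writing $f*\psi = g*P(D)\psi + g_1*\psi$ and noting that $P(D)\psi, \psi \in \mathcal{D}^*(\mathbb{R}^d) \subseteq L^1_{\check{\omega}}$ (using the ultrapolynomial growth of $\check{\omega}$ from Theorem \ref{ttt11}(c)), so both convolutions lie in $L^1_{\check{\omega}} * E' = E'_*$ by Definition \ref{goodspace}. For (v), I would apply Komatsu's parametrix a second time, this time to the two elements $g$ and $g_1$ furnished by (iv): writing $P_1(D)u_1 = \delta + \chi_1$ and substituting $g = P_1(D)(g*u_1) - g*\chi_1$ together with the analogous identity for $g_1$ into $f = P(D)g + g_1$ expresses $f$ as a finite sum of four terms, each an ultradifferential operator applied to a convolution $h*w$ with $h \in E'$ and $w \in \mathcal{D}^*(\mathbb{R}^d)$. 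Every such $h*w$ is in $E'_*$ by the previous argument, and lies in $UC_{\omega}$ by Proposition \ref{convolution E E'} (since $\mathcal{D}^*(\mathbb{R}^d) \subseteq \check{E}$); when $E$ is reflexive, the sharper inclusion $E'*\check{E} \subseteq C_{\omega}$ of the same proposition upgrades this to $f_k \in E' \cap C_{\omega}$. The main bookkeeping hurdle I anticipate is keeping the $*$-class of the successive parametrix operators and, in the Roumieu case, the auxiliary sequences $(r_p) \in \mathfrak{R}$ consistent across both applications of Komatsu's parametrix; once this is correctly threaded, the remaining manipulations are routine.
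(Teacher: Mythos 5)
Your overall architecture is sound and your treatments of $(i)\Rightarrow(ii)$, $(iv)\Rightarrow(i)$, $(v)\Rightarrow(i)$, and $(iv)\Rightarrow(iii)$ match the paper's. The gap is in your use of Komatsu's parametrix. You write ``choose $u,\chi\in\mathcal{D}^*(\mathbb{R}^d)$ and an ultradifferential operator $P(D)$ of $*$-type with $P(D)u=\delta+\chi$''--- but no such $u$ of class $*$ exists: if $u\in\DD^{*}(\RR^d)$ then $P(D)u$ is again an ultradifferentiable function of class $*$ and cannot equal $\delta+\chi$. The parametrix only produces $u$ in the strictly larger classes $\DD^{\{M_p\}}_{K,r}$ (Beurling case, for a prescribed $r>0$) or $\DD^{\{M_p\}}_{K,(r_j)}$ (Roumieu case, for a prescribed $(r_j)\in\mathfrak{R}$). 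Consequently hypothesis $(ii)$, which only concerns $f*\psi$ for $\psi\in\DD^*(\RR^d)$, does \emph{not} directly yield $f*u\in E'$, and your $(ii)\Rightarrow(iv)$ as written fails. This is exactly the point where the paper has to do real work: from the boundedness of $\{f*\check{\varphi}\mid \varphi\in\DD^*(\RR^d),\ \|\varphi\|_E\leq 1\}$ in $\DD'^*_K$ one gets equicontinuity (barrelledness of $\DD^*_K$), which produces a uniform estimate valid on a neighborhood determined by some $r$ (resp.\ some $(k_p)$, from which a suitable $(r_p)$ is built); a mollification argument as in Proposition \ref{S'} then extends the bound to all $\rho\in\DD^{(M_p)}_{\Omega,r}$ (resp.\ $\DD^{\{M_p\}}_{\Omega,(r_p)}$), giving $f*\rho\in E'$ for those $\rho$; only \emph{then} is the parametrix invoked for that specific $r$ (resp.\ $(r_p)$). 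The order of quantifiers is essential in the Roumieu case: the admissible $(r_p)$ is dictated by the equicontinuity step, and the parametrix is applied afterwards for that sequence.

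The same issue infects your derivation of $(v)$ from $(iv)$: the terms $g*u_1$ are convolutions with a function that is \emph{not} in $\DD^*(\RR^d)$, so your claim that every term is ``$h*w$ with $w\in\DD^*(\RR^d)$'' is incorrect, and placing $g*u_1$ in $E'_*\cap UC_{\omega}$ requires knowing $u_1\in L^1_{\check{\omega}}$ (clear, since $u_1$ is bounded and compactly supported) \emph{and} $\check{u}_1\in E$; the latter forces you to calibrate the parametrix parameter via Lemma \ref{lemma4.2} so that $\DD^{(M_p)}_{\Omega,r}\subseteq\check{E}$ (resp.\ $\DD^{\{M_p\}}_{\Omega,(r_p)}\subseteq\check{E}$) before Proposition \ref{convolution E E'} applies. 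The paper avoids a second independent calibration by deriving $(v)$ directly from $(ii)$, iterating the identity $f=P(D)(u*f)+\psi*f$ with the single, already calibrated $u$. So the skeleton of your argument is right, but the ``immediate'' step is precisely where the theorem's content lies, and as stated it does not go through.
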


\begin{re}\label{r1}
One can replace $\mathcal{D}'^*(\mathbb{R}^d)$ and $\mathcal{D}^*(\mathbb{R}^d)$ by $\mathcal{S}'^*(\mathbb{R}^d)$ and $\mathcal{S}^*(\mathbb{R}^d)$ in every statement of Theorem \ref{karak}.
\end{re}

\begin{proof}
We denote $B_{E}=\{ \varphi\in\mathcal{D}^*(\mathbb{R}^d)|\, \|\varphi\|_E\leq1\}$.

$(i)\Rightarrow (ii)$. Fix first $\psi\in\mathcal{D}^*(\mathbb{R}^d)$. By (\ref{eq5}) the set $\check{\psi}\ast B_{E}=\{\check{\psi}*\varphi|\: \varphi \in B_{E}\}$ is bounded in $\mathcal{D}^*_{E}$. Hence, $|\langle f*\psi,\varphi\rangle|=|\langle f,\check{\psi}*\varphi\rangle|\leq C_{\psi}$ for $\varphi\in B_{E}$. So, $|\langle f*\psi,\varphi\rangle|\leq C_{\psi}\|\varphi\|_E,$ for all $\varphi\in\mathcal{D}^*(\mathbb{R}^{d})$. Since $\mathcal{D}^*(\mathbb{R}^{d})$ is dense in $E$, we obtain $f*\psi\in E'$, for each $\psi\in \mathcal{D}^*(\mathbb{R}^{d})$.

 $(ii)\Rightarrow(iv)$. Let $\Omega$ be a bounded open symmetric neighborhood of $0$ in $\RR^d$ and set $K=\overline{\Omega}$. For arbitrary but fixed $\psi\in\mathcal{D}^*_K$ we have $\langle f*\check{\varphi},\check{\psi}\rangle=\langle f*\psi,\varphi\rangle$. We obtain that the set $\{\langle f*\check{\varphi},\check{\psi}\rangle|\, \varphi\in B_{E}\}$ is bounded in $\mathbb{C}$, that is, $\{f*\check{\varphi}|\, \varphi\in B_{E}\}$ is weakly bounded in $\DD'^*_K$; hence, it is equicontinuous. Using the same technique as in the proof of Proposition \ref{S'}, we obtain that there exists $r>0$ such that for each $\rho\in \mathcal{D}^{(M_p)}_{\Omega,r}$ there exists $C_{\rho}>0$ (there exists $(r_p)\in \mathfrak{R}$ such that for each $\rho\in \mathcal{D}^{\{M_p\}}_{\Omega,(r_p)}$ there exists $C_{\rho}>0$) satisfying $|\langle f*\rho,\varphi\rangle|\leq C_{\rho}$ for all $\varphi\in B_{E}$. The density of $\mathcal{D}^*(\mathbb{R}^d)$ in $E$ implies that $f*\rho\in E'$ for each $\rho\in \mathcal{D}^{(M_p)}_{\Omega,r}$ (for each $\rho\in \mathcal{D}^{\{M_p\}}_{\Omega,(r_p)}$). The parametrix of Komatsu implies the existence of $u\in \mathcal{D}^{(M_p)}_{\Omega,r}$, $\psi\in \DD^{(M_p)}(\Omega)$ and ultradifferential operator $P(D)$ of class $(M_p)$ (the existence of $u\in \mathcal{D}^{\{M_p\}}_{\Omega,(r_p)}$, $\psi \in \mathcal{D}^{\{M_p\}}(\Omega)$ and ultradifferential operator $P(D)$ of class $\{M_p\}$) satisfying $f=P(D)(u*f)+\psi*f$. This gives the desired representation.

 $(iv)\Rightarrow (i)$. Obvious.

 $(ii)\Rightarrow (v)$. Proceed as in $(ii)\Rightarrow(iv)$ to obtain $f=P(D)(u*f)+\psi*f$ for some $u\in \mathcal{D}^{(M_p)}_{\Omega,r}$, $\psi\in \DD^{(M_p)}(\Omega)$ and ultradifferential operator $P(D)$ of class $(M_p)$ (for some $u\in \mathcal{D}^{\{M_p\}}_{\Omega,(r_p)}$, $\psi \in \mathcal{D}^{\{M_p\}}(\Omega)$ and ultradifferential operator $P(D)$ of class $\{M_p\}$). Moreover, by using Lemma \ref{lemma4.2}, one can easily see from the proof of $(ii)\Rightarrow(iv)$ that we can choose $r$ such that $\mathcal{D}^{(M_p)}_{\Omega,r}\subseteq \check{E}$ (we can choose $(r_p)$ such that $\mathcal{D}^{\{M_p\}}_{\Omega,(r_p)}\subseteq \check{E}$). Observe that the composition of ultradifferential operators of class * is again an ultradifferential operator of class *. We obtain
\beqs
f&=&P(D)(u*(P(D)(u*f)+\psi*f))+\psi*(P(D)(u*f)+\psi*f)\\
&=&P(D)(P(D)(u*(u*f)))+P(D)(u*(\psi*f))+P(D)(\psi*(u*f))+\psi*(\psi*f)
\eeqs
and $u*(u*f),u*(\psi*f),\psi*(u*f),\psi*(\psi*f)\in E'_*\cap UC_{\omega}$ by the definition of $E'_*$ and Proposition \ref{convolution E E'}. If $E$ is reflexive, all of these are in fact elements of $C_{\omega}$ by the same proposition.\\

 The implications $(v)\Rightarrow (i)$, $(iv)\Rightarrow(iii)$ and $(iii)\Rightarrow(ii)$ are obvious.
\end{proof}

\begin{pro}\label{com}
Let $\mathbf{f}:\mathcal{D}^*(\RR^d)\rightarrow \mathcal{D}'^*(\RR^d)$ be linear and continuous. The following statements are equivalent:
\begin{itemize}
\item[$(i)$] $\mathbf{f}$ commutates with every translation, i.e., $\left\langle \mathbf{f},T_{-h}\varphi\right\rangle = T_{h} \left\langle \mathbf{f},\varphi\right\rangle$, for all $h\in\mathbb{R}^{d}$ and $\varphi\in \mathcal{D}^*(\mathbb{R}^{d}).$
\item[$(ii)$] $\mathbf{f}$ commutates with every convolution, i.e., $\left\langle\mathbf{f},\psi*\varphi\right\rangle=\check{\psi}*\left\langle \mathbf{f},\varphi\right\rangle$, for all $\psi,\varphi\in \mathcal{D}^*(\mathbb{R}^{d}).$
\item[$(iii)$] There exists $f\in \mathcal{D}'^*(\RR^d)$ such that  $\langle \mathbf{f},\varphi\rangle=f*\check{\varphi}$ for every $\varphi\in \mathcal{D}^*(\RR^d)$.
\end{itemize}
\end{pro}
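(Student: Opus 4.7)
The plan is to prove the proposition via the cycle $(iii)\Rightarrow(i)\Rightarrow(ii)\Rightarrow(iii)$. The implication $(iii)\Rightarrow(i)$ is a one-line computation: the reflection of $T_{-h}\varphi$ equals $T_h\check\varphi$, and convolution commutes with translations, whence
\[
\langle\mathbf{f},T_{-h}\varphi\rangle=f*T_h\check\varphi=T_h(f*\check\varphi)=T_h\langle\mathbf{f},\varphi\rangle.
\]

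For $(i)\Rightarrow(ii)$, I would use Riemann sum approximations. Given $\psi,\varphi\in\mathcal{D}^{*}(\RR^d)$, the finite sums $L_\varepsilon=\varepsilon^d\sum_{n\in\ZZ^d,\,\varepsilon n\in\mathrm{supp}\,\psi}\psi(\varepsilon n)T_{-\varepsilon n}\varphi$ belong to $\mathcal{D}^{*}(\RR^d)$ and converge to $\psi*\varphi$ there as $\varepsilon\to 0^+$, by the same type of argument used in the proof of Theorem~\ref{ttt11}(e). Linearity and $(i)$ then give
\[
\langle\mathbf{f},L_\varepsilon\rangle=\varepsilon^d\sum_{n}\psi(\varepsilon n)T_{\varepsilon n}\langle\mathbf{f},\varphi\rangle,
\]
which are themselves Riemann sums approximating $\int_{\RR^d}\psi(t)T_t\langle\mathbf{f},\varphi\rangle\,dt=\check\psi*\langle\mathbf{f},\varphi\rangle$ in $\mathcal{D}'^{*}(\RR^d)$ (as one sees by testing against any $\chi\in\mathcal{D}^{*}(\RR^d)$). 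Passing to the limit using the continuity of $\mathbf{f}$ yields $(ii)$.

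The main step is $(ii)\Rightarrow(iii)$, and I would carry it out using the parametrix of Komatsu. First, I would extract from $(ii)$ the commutation relation $\langle\mathbf{f},P(D)\chi\rangle=P(-D)\langle\mathbf{f},\chi\rangle$ valid for every ultradifferential operator $P(D)$ of class $*$ and every $\chi\in\mathcal{D}^{*}(\RR^d)$: apply $(ii)$ to $(P(D)\phi_j)*\chi=\phi_j*P(D)\chi$ for a $\delta$-sequence $\phi_j\in\mathcal{D}^{*}(\RR^d)$, note that the reflection of $P(D)\phi_j$ equals $P(-D)\check\phi_j$, and pass to the limit $j\to\infty$ using continuity of $\mathbf{f}$ and of $P(\pm D)$. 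Next, picking $u,\psi\in\mathcal{D}^{*}(\RR^d)$ and $P(D)$ from the parametrix with $P(D)u=\delta+\psi$, I would define
\[
f:=P(-D)\langle\mathbf{f},u\rangle-\langle\mathbf{f},\psi\rangle\in\mathcal{D}'^{*}(\RR^d).
\]
Combining $(ii)$ in the form $\langle\mathbf{f},v\rangle*\check\varphi=\langle\mathbf{f},\varphi*v\rangle$ (which follows from commutativity of convolution and $(ii)$) with the commutation relation, a short calculation collapses $f*\check\varphi$ to
\[
\langle\mathbf{f},P(D)(u*\varphi)-\psi*\varphi\rangle=\langle\mathbf{f},(P(D)u-\psi)*\varphi\rangle=\langle\mathbf{f},\varphi\rangle.
\]

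The main obstacle lies in establishing the commutation relation with $P(D)$. Since $P(D)$ is of infinite order, a naive finite-difference argument interpreting derivatives as limits of translations is insufficient, and one must genuinely rely on the regularization with $\delta$-sequences together with the continuity of $\mathbf{f}$ on $\mathcal{D}^{*}(\RR^d)$ and of $P(\pm D)$ on both $\mathcal{D}^{*}(\RR^d)$ and $\mathcal{D}'^{*}(\RR^d)$. Once this relation is in hand, the rest of $(ii)\Rightarrow(iii)$ is essentially the standard parametrix reduction of writing an arbitrary test function as $P(D)(u*\varphi)-\psi*\varphi$ and transferring the structure through $\mathbf{f}$.
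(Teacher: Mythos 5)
Your implications $(iii)\Rightarrow(i)$ and $(i)\Rightarrow(ii)$ are fine and essentially coincide with the paper's argument, and your derivation of the commutation relation $\langle\mathbf{f},P(D)\chi\rangle=P(-D)\langle\mathbf{f},\chi\rangle$ from $(ii)$ is correct. The gap is in the key step of $(ii)\Rightarrow(iii)$: you ``pick $u,\psi\in\mathcal{D}^{*}(\mathbb{R}^d)$ from the parametrix with $P(D)u=\delta+\psi$,'' but no such $u$ in $\mathcal{D}^{*}(\mathbb{R}^d)$ can exist. If $u$ belonged to $\mathcal{D}^{*}(\mathbb{R}^d)$, then $P(D)u$ would again be a smooth compactly supported function, whereas $\delta+\psi$ is not. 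Komatsu's parametrix only provides $u\in\mathcal{D}^{\{M_p\}}_{K,r}$ in the Beurling case and $u\in\mathcal{D}^{\{M_p\}}_{K,(r_j)}$ in the Roumieu case, and these spaces are strictly larger than $\mathcal{D}^{(M_p)}_{K}$ and $\mathcal{D}^{\{M_p\}}_{K}$ respectively. Consequently $\langle\mathbf{f},u\rangle$ is simply undefined, and the identity $\langle\mathbf{f},u\rangle\ast\check{\varphi}=\langle\mathbf{f},\varphi\ast u\rangle$ that your final computation relies on is likewise not available.

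This is not a cosmetic issue: extending $\mathbf{f}$ (or the conclusion of $(ii)$) from $\mathcal{D}^{*}_{K}$ to the larger parametrix classes is precisely the technical heart of the matter, and it is where the paper's proof invests almost all of its effort. The paper avoids applying $\mathbf{f}$ to $u$ altogether; instead it studies the family $\langle\mathbf{f},\delta_m\rangle$ for a $\delta$-sequence, proves via a separate-to-joint equicontinuity argument for the bilinear maps $(\varphi,\psi)\mapsto\langle\mathbf{f},\delta_m\rangle\ast\varphi\ast\psi|_{K}$ that uniform bounds persist on $\mathcal{D}^{\{M_p\}}_{\Omega,(r_p)}$, applies the parametrix to each $\langle\mathbf{f},\delta_m\rangle$ to conclude that the family is bounded in $\mathcal{D}'^{*}(\mathbb{R}^d)$, and then extracts a convergent subsequence by the Montel property to produce $f$. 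If you want to keep your more direct construction $f=P(-D)\langle\mathbf{f},u\rangle-\langle\mathbf{f},\psi\rangle$, you must first show that $\mathbf{f}$ extends continuously to $\mathcal{D}^{(M_p)}_{\Omega,r}$ (resp.\ $\mathcal{D}^{\{M_p\}}_{\Omega,(r_p)}$) for a suitable $r$ (resp.\ $(r_p)$) compatible with the parametrix; this requires an equicontinuity argument of the same kind as in the proofs of Proposition~\ref{S'} and Theorem~\ref{karak}, and cannot be obtained from the continuity of $\mathbf{f}$ on $\mathcal{D}^{*}(\mathbb{R}^d)$ alone.
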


\begin{proof} $(i)\Rightarrow (ii)$ Let $\varphi,\psi\in\DD^*(\RR^d)$ and denote $K=\mathrm{supp}\, \psi$. Then the Riemann sums
\beqs
L_{\varepsilon}(\cdot)=\sum_{y\in\ZZ^d,\, \varepsilon y\in K}\psi(\varepsilon y)\varphi(\cdot-\varepsilon y)\varepsilon^d =\sum_{y\in\ZZ^d,\, \varepsilon y\in K}\psi(\varepsilon y)T_{-\varepsilon y}\varphi \varepsilon ^d
\eeqs
converge to $\psi*\varphi$ in $\DD^*(\RR^d)$, when $\varepsilon \rightarrow0^+$. The continuity of $\mathbf{f}$ implies
\beqs
\langle\mathbf{f},\psi*\varphi\rangle =\lim_{\varepsilon\rightarrow 0^+}\sum_{y\in\ZZ^d,\, \varepsilon y\in K} \psi(\varepsilon y)\langle \mathbf{f},T_{-\varepsilon y} \varphi\rangle \varepsilon ^d =\lim_{\varepsilon\rightarrow 0^+}\sum_{y\in\ZZ^d,\, \varepsilon y\in K} \psi(\varepsilon y) T_{\varepsilon y}\langle \mathbf{f},\varphi\rangle \varepsilon ^d,
\eeqs
in $\DD'^*(\RR^d)$. Let $\chi\in \mathcal{D}^*(\RR^d)$. Then
\beqs
\left\langle\lim_{\varepsilon\rightarrow 0^+}\sum_{y\in\ZZ^d,\, \varepsilon y\in K} \psi(\varepsilon y) T_{\varepsilon y} \langle \mathbf{f},\varphi\rangle \varepsilon ^d,\chi\right\rangle=\langle\langle\mathbf{f},\varphi\rangle,\psi*\chi\rangle= \langle\check{\psi}*\langle\mathbf{f},\varphi\rangle,\chi\rangle.
\eeqs

 $(ii)\Rightarrow (iii)$. Let $\Omega$ be an arbitrary symmetric bounded open neighborhood of $0$ in $\RR^d$ and set $K=\overline{\Omega}$. Take $\delta_m\in\DD^*\left(\RR^d\right)$ as in the proof of Proposition \ref{S'}. For every $\psi\in \mathcal{D}^*(\RR^d)$ we have that $\psi*\delta_m\rightarrow \psi$ in $\mathcal{D}^*(\RR^d)$ when $m\rightarrow\infty$. Also,
\begin{equation}\label{delta}
\check{\psi}*\langle\mathbf{f},\delta_m\rangle=\langle \mathbf{f},\psi*\delta_m\rangle\rightarrow \langle\mathbf{f},\psi\rangle\mbox{ when }m\rightarrow\infty.
\end{equation}
First we will prove that the set $\{\langle \mathbf{f},\delta_m\rangle|m\in\mathbb{Z}_+\}$ is equicontinuous subset of $\mathcal{D}'^*(\RR^d)$, or equivalently bounded in $\DD'^*\left(\RR^d\right)$ (since $\DD^*(\RR^d)$ is barreled). By (\ref{delta}), for each fixed $\psi\in\DD^*\left(\RR^d\right)$, the set $\{\psi*\langle \mathbf{f},\delta_m\rangle|m\in\ZZ_+\}$ is bounded in $\DD'^*\left(\RR^d\right)$. Denote by $G_m$ the bilinear mapping $(\varphi,\psi)\mapsto \langle \mathbf{f},\delta_m\rangle*\varphi*\psi|_{K}$, $G_m: \DD^*_K\times \DD^*_K\rightarrow C(K)$. For fixed $\psi\in\DD^*_K$, the mappings $G_{m,\psi}$ defined by $\varphi\mapsto \langle \mathbf{f},\delta_m\rangle*\varphi*\psi|_K$, $\DD^*_K\rightarrow C(K)$ are linear and continuous and the set $\{G_{m,\psi}|\, m\in\ZZ_+\}$ is pointwise bounded in $\mathcal{L}\left(\DD^*_K,C(K)\right)$. Since $\DD^*_K$ is barreled, this set is equicontinuous. Similarly, for each fixed $\varphi\in\DD^*_K$, the mappings $\psi\mapsto \langle \mathbf{f},\delta_m\rangle*\varphi*\psi|_K$, $\DD^*_K\rightarrow C(K)$ form an equicontinuous subset of $\mathcal{L}\left(\DD^*_K,C(K)\right)$. We obtain that the set of bilinear mappings $\{G_m|\, m\in\ZZ_+\}$ is separately equicontinuous and since $\DD^{(M_p)}_K$ is an $(F)$-space and $\DD^{\{M_p\}}_K$ is a barreled $(DF)$-space, it is equicontinuous (see \cite[Thm. 2, p. 158]{kothe1} for the case of $(F)$-spaces and \cite[Thm. 11, p. 161]{kothe1} for the case of barreled $(DF)$-spaces). We will continue the proof by considering only the $\{M_p\}$ case; the $(M_p)$ case can be treated similarly. By the equicontinuity of the mappings $G_m$, $m\in\ZZ_+$, there exist $C>0$ and $(k_p)\in\mathfrak{R}$ such that for all $\varphi,\psi\in\DD^{\{M_p\}}_K$, $m\in \ZZ_+$, we have $\left\|G_m(\varphi,\psi)\right\|_{L^{\infty}(K)}\leq C\|\varphi\|_{K,(k_p)}\|\psi\|_{K,(k_p)}$. Let $r_p=k_{p-1}/H$, for $p\in\NN$, $p\geq 2$ and set $r_1=\min\{1,r_2\}$. Then $(r_p)\in \mathfrak{R}$. For $\chi\in \DD^{\{M_p\}}_{\Omega, (r_p)}$, for large enough $j$, $\chi*\delta_j\in \DD^{\{M_p\}}_K$ and by similar technique as in the proof of Proposition \ref{S'} one can prove that $\chi*\delta_j \rightarrow \chi$ in $\DD^{\{M_p\}}_{K,(k_p)}$, where $\delta_j\in\DD^*(\RR^d)$, $j\in\ZZ_+$, is the same sequence as that used in the proof of Proposition \ref{S'}. Let $\varphi,\psi\in \DD^{\{M_p\}}_{\Omega, (r_p)}$ and set $\varphi_j=\varphi*\delta_j$, $\psi_j=\psi*\delta_j$. Since\\
$\|G_m(\varphi_j,\psi_j)-G_m(\varphi_s,\psi_s)\|_{L^{\infty}(K)}$
\beqs
&\leq& \|G_m(\varphi_j,\psi_j-\psi_s)\|_{L^{\infty}(K)}+\|G_m(\varphi_j-\varphi_s,\psi_s)\|_{L^{\infty}(K)}\\
&\leq&C\left(\|\varphi_j\|_{K,(k_p)}\|\psi_j-\psi_s\|_{K,(k_p)}+\|\varphi_j-\varphi_s\|_{K,(k_p)}\|\psi_s\|_{K,(k_p)}\right),
\eeqs
it follows that for each fixed $m$, $G_m(\varphi_j,\psi_j)$ is a Cauchy sequence in $C(K)$; hence, it must converge. On the other hand, $\langle \mathbf{f},\delta_m\rangle*\varphi_j*\psi_j\rightarrow\langle \mathbf{f},\delta_m\rangle*\varphi*\psi$ in $\DD'^{\{M_p\}}\left(\RR^d\right)$ and, since $C(K)$ is continuously injected into $\DD'^{\{M_p\}}_K$, it follows that $G_m(\varphi_j,\psi_j)$ converges to $\langle \mathbf{f},\delta_m\rangle*\varphi*\psi|_{K}$ in $\DD'^{\{M_p\}}_K$ (here the restriction to $K$ is in fact the transposed mapping of the inclusion $\DD^{\{M_p\}}_K\rightarrow\DD^{\{M_p\}}\left(\RR^d\right)$). Thus, $G_m(\varphi_j,\psi_j)\rightarrow \langle \mathbf{f},\delta_m\rangle*\varphi*\psi|_{K}$ in $C(K)$. By the arbitrariness of $\varphi,\psi\in \DD^{\{M_p\}}_{\Omega, (r_p)}$ and by passing to the limit in the inequality $\left\|G_m(\varphi_j,\psi_j)\right\|_{L^{\infty}(K)}\leq C\|\varphi_j\|_{K,(k_p)}\|\psi_j\|_{K,(k_p)}$, we have $\left\|\langle \mathbf{f},\delta_m\rangle*\varphi*\psi|_K\right\|_{L^{\infty}(K)}\leq C\|\varphi\|_{K,(k_p)}\|\psi\|_{K,(k_p)}$ for all $m\in \ZZ_+$, $\varphi,\psi\in \DD^{\{M_p\}}_{\Omega, (r_p)}$. For the fixed $(r_p)\in\mathfrak{R}$, by the parametrix of Komatsu, there exist ultradifferential operator $P(D)$ of class $\{M_p\}$, $u\in\DD^{\{M_p\}}_{\Omega, (r_p)}$ and $\psi\in \DD^{\{M_p\}}(\Omega)$ such that $\langle \mathbf{f},\delta_m\rangle=P(D)\left(\langle \mathbf{f},\delta_m\rangle*u\right)+\langle \mathbf{f},\delta_m\rangle*\psi$. Applying again the parametrix we have
\beqs
\langle \mathbf{f},\delta_m\rangle=P(D)P(D)\left(\langle \mathbf{f},\delta_m\rangle*u*u\right)+2P(D)\left(\langle \mathbf{f},\delta_m\rangle*\psi*u\right)+\langle \mathbf{f},\delta_m\rangle*\psi*\psi.
\eeqs
Since each of the sets $\{\langle \mathbf{f},\delta_m\rangle*u*u|_K\,|\, m\in\ZZ_+\}$, $\{\langle \mathbf{f},\delta_m\rangle*\psi*u|_K\,|\, m\in\ZZ_+\}$, and $\{\langle \mathbf{f},\delta_m\rangle*\psi*\psi|_K\,|\, m\in\ZZ_+\}$ is bounded in $\DD'^{\{M_p\}}_K$ and, hence, also in $\DD'^{\{M_p\}}(\Omega)$, we obtain that $\{\langle \mathbf{f},\delta_m\rangle|_{\Omega}\,|\,m\in\mathbb{N}\}$ is bounded in $\DD'^{\{M_p\}}(\Omega)$. By the arbitrariness of $\Omega$ it follows that this set is bounded in $\DD'^{\{M_p\}}\left(\RR^d\right)$. Hence, it is relatively compact ($\DD'^{\{M_p\}}(\RR^d)$ is Montel); thus, there exists subsequence $\langle \mathbf{f},\delta_{m_s}\rangle$ which converges to an $f$ in $\DD'^{\{M_p\}}\left(\RR^d\right)$. Since $\langle \mathbf{f},\delta_{m_s}*\chi\rangle=\langle \mathbf{f},\delta_{m_s}\rangle*\check{\chi}$ for each $\chi\in\DD^{\{M_p\}}\left(\RR^d\right)$, after passing to the limit we have $\langle \mathbf{f},\chi\rangle= f*\check{\chi}$.\\

 The implication $(iii)\Rightarrow (i)$ is obvious.
\end{proof}

We also have the following interesting corollary.

\begin{co}\label{convolution corollary}
Let $\mathbf{f}\in\mathcal{D}'^*(\mathbb{R}^{d},E'_{\sigma(E',E)})$, that is, a continuous linear mapping $\mathbf{f}:\mathcal{D}^*(\mathbb{R}^{d})\rightarrow E'_{\sigma(E',E)}$. If $\mathbf{f}$ commutes with every translation in the sense of Proposition \ref{com}, then there exists $f\in \mathcal{D}_{E'_{\ast}}'^*$ such that $\mathbf{f}$ is of the form
\begin{equation}\label{eq:8}
\left\langle\mathbf{f},\varphi\right\rangle=f\ast \check{\varphi}, \ \ \ \varphi\in\mathcal{D}^*(\mathbb{R}^{d}).
\end{equation}
\end{co}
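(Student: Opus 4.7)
The plan is to reduce the statement to a direct combination of Proposition \ref{com} and Theorem \ref{karak}, with a preliminary continuity upgrade as the only non-trivial ingredient.

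First I would show that $\mathbf{f}$ is continuous as a linear map $\mathcal{D}^*(\RR^d)\to\mathcal{D}'^*(\RR^d)$, which is the precise hypothesis needed for Proposition \ref{com}. Since $\mathcal{D}^*(\RR^d)$ is continuously and densely injected into $E$ (Proposition \ref{970} combined with Theorem \ref{ttt11}$(a)$), transposition gives a continuous injection $E'\hookrightarrow\mathcal{D}'^*(\RR^d)$, and this transpose is continuous for the weak topologies on both sides. Composing with $\mathbf{f}:\mathcal{D}^*(\RR^d)\to E'_{\sigma(E',E)}$ yields a weakly continuous linear map $\mathcal{D}^*(\RR^d)\to\mathcal{D}'^*(\RR^d)$. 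Because $\mathcal{D}^*(\RR^d)$ is barreled, every weakly bounded subset of its dual is equicontinuous; hence weak continuity of a linear map out of $\mathcal{D}^*(\RR^d)$ is equivalent to continuity in the original topologies, and we obtain continuity of $\mathbf{f}:\mathcal{D}^*(\RR^d)\to\mathcal{D}'^*(\RR^d)$.

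Next I would apply Proposition \ref{com}. The translation-commutation assumption in the sense of that proposition carries over verbatim to the $\mathcal{D}'^*$-valued version of $\mathbf{f}$, since the injection $E'\hookrightarrow\mathcal{D}'^*(\RR^d)$ intertwines translations. The implication $(i)\Rightarrow(iii)$ of Proposition \ref{com} then produces $f\in\mathcal{D}'^*(\RR^d)$ such that $\langle\mathbf{f},\varphi\rangle=f\ast\check{\varphi}$ for every $\varphi\in\mathcal{D}^*(\RR^d)$, which is precisely (\ref{eq:8}).

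Finally, to promote $f$ from $\mathcal{D}'^*(\RR^d)$ to $\mathcal{D}'^*_{E'_{\ast}}$, I would invoke Theorem \ref{karak}. The hypothesis that $\mathbf{f}$ takes values in $E'$ translates, via (\ref{eq:8}) and the substitution $\psi=\check{\varphi}$, into the statement that $f\ast\psi\in E'$ for every $\psi\in\mathcal{D}^*(\RR^d)$. This is exactly condition $(ii)$ of Theorem \ref{karak}, and the equivalence $(ii)\Leftrightarrow(i)$ therein gives $f\in\mathcal{D}'^*_{E'_{\ast}}$, finishing the argument.

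The only place that requires genuine care is the first paragraph, the weak-to-strong continuity upgrade; once $\mathbf{f}$ is recognized as continuous into $\mathcal{D}'^*(\RR^d)$, the rest of the proof is a mechanical chaining of the two already established results and involves no new estimates.
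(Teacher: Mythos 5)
Your argument is correct and follows essentially the same route as the paper: first upgrade $\mathbf{f}$ to a continuous map $\mathcal{D}^*(\mathbb{R}^d)\rightarrow\mathcal{D}'^*(\mathbb{R}^d)$, then chain Proposition \ref{com} with the equivalence $(ii)\Leftrightarrow(i)$ of Theorem \ref{karak}. The only cosmetic difference is in the continuity upgrade, where the paper argues via boundedness (weakly bounded sets of $\mathcal{D}'^*(\mathbb{R}^d)$ are strongly bounded because $\mathcal{D}^*(\mathbb{R}^d)$ is barreled) and then concludes continuity from the fact that $\mathcal{D}^*(\mathbb{R}^d)$ is \emph{bornological} --- the property you should cite at the final step in place of barreledness alone.
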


\begin{proof}
Since the inclusion $E'_{\sigma(E',E)}\to\mathcal{D}'^*_{\sigma}(\mathbb{R}^{d})$ is continuous (as the transposed mapping of $\DD^*(\RR^d)\hookrightarrow E$), $\mathbf{f}: \mathcal{D}^*(\mathbb{R}^{d})\to\mathcal{D}'^*_{\sigma}(\mathbb{R}^{d})$ is also a continuous. For $B$ bounded in $\mathcal{D}^*(\mathbb{R}^{d})$, $\mathbf{f}(B)$ is bounded in $\mathcal{D}'^*_{\sigma}(\mathbb{R}^{d})$ and hence bounded in $\mathcal{D}'^*(\mathbb{R}^{d})$. Since $\DD^*\left(\RR^d\right)$ is bornological, $\mathbf{f}:\mathcal{D}^*(\mathbb{R}^{d})\rightarrow \mathcal{D}'^*(\mathbb{R}^{d})$ is continuous. Now the claim follows from Proposition \ref{com} and Theorem \ref{karak}.
\end{proof}

If $F$ is a complete l.c.s., we define $\mathcal{S}'^*\left(\mathbb{R}^{d},F\right)= \SSS'^*(\RR^d)\varepsilon F$. Since $\SSS'^*(\RR^d)$ is nuclear, it satisfies the weak approximation property and we obtain $\mathcal{L}_b\left(\SSS^*(\RR^d),F\right)\cong\SSS'^*(\RR^d)\varepsilon F\cong \SSS'^*(\RR^d) \hat{\otimes} F$. (For the definition of the $\varepsilon$ tensor product, the definition of the weak approximation property, and their connection, we refer to \cite{SchwartzV} and \cite{Komatsu3}.)

 We now embed the ultradistribution space $\mathcal{D}'^*_{E'_{\ast}}$ into the space of $E'$-valued tempered ultradistributions as follows. Define first the continuous injection $\iota:\mathcal{S}'^*(\mathbb{R}^{d})\to\mathcal{S}'^*(\mathbb{R}^{d},\mathcal{S}'^*(\mathbb{R}^{d}))$, where $\iota(f)=\mathbf{f}$ is given by (\ref{eq:8}). Observe the restriction of $\iota$ to $\mathcal{D}'^*_{E'_{\ast}}$, $\iota:\mathcal{D}'^*_{E'_{\ast}}\to\mathcal{S}'^*(\mathbb{R}^{d},E')$ .(The range of $\iota$ is a subset of $\mathcal{S}'^*(\mathbb{R}^{d},E')$ by Theorem \ref{karak} and the remark after it.) Let $B_1$ be an arbitrary bounded subset of $\mathcal{S}^*(\mathbb{R}^d)$. The set $B=\{\psi*\varphi|\,\varphi \in B_1,\|\psi\|_E\leq 1\}$ is bounded in $\mathcal{D}^*_{E}$ (by $(e)$ of Theorem \ref{ttt11}). For  $f\in \mathcal{D}'^*_{E'_{\ast}}$,
\beqs
\sup_{\varphi\in B_1} \|\langle\mathbf{f},\varphi\rangle\|_{E'}= \sup_{\varphi\in B_1} \|f*\check{\varphi}\|_{E'}=\sup_{\varphi\in B_1}\sup_{\|\psi\|_E\leq1}|\langle f,\psi*\varphi\rangle|=\sup_{\chi\in B}|\langle f,\chi\rangle|.
\eeqs
Hence, the mapping $\iota$ is continuous. Furthermore, by $(iii)$ of Theorem \ref{karak}, $\iota(\mathcal{D}_{E'_{\ast}}')\subseteq\mathcal{S}'^*(\mathbb{R}^{d},E'_{\ast})$ and Proposition \ref{com} tells us that $\iota(\mathcal{D}_{E'_{\ast}}'^*)$ is precisely the subspace of $\mathcal{S}'(\mathbb{R}^{d},E'_{\ast})$ consisting of those $\mathbf{f}$ which commute with all translations in the sense of Proposition \ref{com}. Since the translations $T_{h}$ are continuous operators on $E'_{\ast}$, we actually obtain that the range $\iota(\mathcal{D}_{E'_{\ast}}'^*)$ is a closed subspace of $\mathcal{S}'^*(\mathbb{R}^{d},E'_{\ast})$. Note that we may consider $\mathcal{D}'^*(\mathbb{R}^{d})$ instead of $\mathcal{S}'^*(\mathbb{R}^{d})$ in these embeddings.

\begin{co}\label{cor:bounded}
Let $B'\subseteq \DD'^*_{E'_{\ast}}$. The following properties are equivalent:
\begin{itemize}
\item [$(i)$] $B'$ is a bounded subset of $\mathcal{D}_{E'_{\ast}}'^*$.
\item [$(ii)$] $\iota (B')$ is bounded in $\mathcal{S}'^*(\mathbb{R}^{d},E')$ (or equivalently in
    $\mathcal{S}'^*(\mathbb{R}^{d},E'_{\ast})$).
\item [$(iii)$] There exist a bounded subset $\tilde{B}$ of $E'$ and an ultradifferential operator $P(D)$ of class * such that each $f\in B'$ can be represented as $f=P(D)g+g_1$ for some $g,g_1\in \tilde{B}$.
\item [$(iv)$] There are $C>0$ and a finite set $J$ such that every $f\in B'$ admits a representation
    (\ref{eq:representation}) with continuous functions $f_k\in E'_{\ast}\cap UC_{\omega}$ satisfying the uniform
    bounds $\|f_k\|_{E'}\leq C$ and $\|f_k\|_{\infty,\omega}\leq C$. (If $E$ is reflexive one may choose
    $f_k\in E'\cap C_{\omega}$.)
\end{itemize}
\end{co}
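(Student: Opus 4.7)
The plan is to prove the cycle $(i)\Rightarrow(ii)\Rightarrow(iii)\Rightarrow(iv)\Rightarrow(i)$; the parenthetical equivalence in $(ii)$ between boundedness in $\SSS'^{\ast}(\RR^d,E')$ and in $\SSS'^{\ast}(\RR^d,E'_{\ast})$ comes for free, since by Theorem \ref{karak}$(iii)$ the image $\iota(\DD'^{\ast}_{E'_{\ast}})$ lies in $\SSS'^{\ast}(\RR^d,E'_{\ast})$ and $E'_{\ast}$ carries the norm induced from $E'$. The implication $(i)\Rightarrow(ii)$ is then just the continuity of $\iota$ established in the discussion preceding the corollary.

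For $(ii)\Rightarrow(iii)$, I would mimic the argument of $(ii)\Rightarrow(iv)$ in Theorem \ref{karak}, taking care that the constants are uniform in $f\in B'$. Fix once and for all a bounded symmetric open neighborhood $\Omega$ of the origin and apply the parametrix of Komatsu to obtain a single triple $(P(D),u,\psi)$ with $u,\psi\in\DD^{\ast}(\Omega)$ (in the Roumieu case the regularity of $u$ is dictated by some $(r_p)\in\mathfrak R$, as in Theorem \ref{karak}) and $\delta=P(D)u-\psi$. Then every $f\in B'$ admits the representation $f=P(D)(f\ast u)-f\ast\psi$. Since $\{\check u,\check\psi\}$ is a bounded set in $\SSS^{\ast}(\RR^d)$, the hypothesis $(ii)$ yields $\sup_{f\in B'}\|f\ast u\|_{E'}<\infty$ and $\sup_{f\in B'}\|f\ast\psi\|_{E'}<\infty$, so one may take $\tilde B$ to be any bounded subset of $E'$ absorbing these two families.

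The heart of the argument is $(iii)\Rightarrow(iv)$, where I would apply the parametrix a second time to convert the $E'$-valued coefficients into convolutions by test functions. Given $f=P(D)g+g_1$ with $g,g_1\in\tilde B$, choose a second parametrix triple $(Q(D),v,\chi)$ with $v,\chi\in\DD^{\ast}(\RR^d)$ and $\delta=Q(D)v-\chi$ and substitute $g=Q(D)(v\ast g)-\chi\ast g$ and similarly for $g_1$, producing the four-term identity
\begin{equation*}
f=(P(D)Q(D))(v\ast g)-P(D)(\chi\ast g)+Q(D)(v\ast g_1)-\chi\ast g_1,
\end{equation*}
which is of the form \eqref{eq:representation}. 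Since $v,\chi\in\DD^{\ast}(\RR^d)\subset L^{1}_{\check\omega}$, the definition $E'_{\ast}=L^{1}_{\check\omega}\ast E'$ places the four functions $v\ast g,\chi\ast g,v\ast g_1,\chi\ast g_1$ in $E'_{\ast}$, while Proposition \ref{convolution E E'} places them in $UC_{\omega}$ (and in $C_{\omega}$ when $E$ is reflexive). The uniform bound $\|f_k\|_{E'}\leq C$ follows from the Banach module estimate $\|v\ast g\|_{E'}\leq\|v\|_{1,\check\omega}\|g\|_{E'}$ and the uniform boundedness of $\tilde B$, while $\|f_k\|_{\infty,\omega}\leq C$ is provided by the continuity of $\ast:E'\times\check E\to UC_{\omega}$ from Proposition \ref{convolution E E'}.

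Finally, for $(iv)\Rightarrow(i)$, I would establish pointwise boundedness of $B'$ on $\DD^{\ast}_{E}$ and invoke Banach--Steinhaus, valid because $\DD^{\ast}_{E}$ is barreled. Indeed, for any $\chi\in\DD^{\ast}_{E}$ the estimate
\begin{equation*}
|\langle f,\chi\rangle|\leq\sum_{k\in J}|\langle f_k,P_k(-D)\chi\rangle|\leq C\sum_{k\in J}\|P_k(-D)\chi\|_{E}
\end{equation*}
is uniformly bounded on bounded subsets of $\DD^{\ast}_{E}$, because each $P_k(-D)$ acts continuously on $\DD^{\ast}_{E}$ (remarked after Proposition \ref{970}) and $\DD^{\ast}_{E}\hookrightarrow E$ is continuous. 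The main technical obstacle sits inside $(ii)\Rightarrow(iii)$ in the Roumieu setting: the sequence $(r_p)\in\mathfrak R$ governing the parametrix must be chosen \emph{uniformly} for the whole family $B'$, but since the hypothesis $(ii)$ is tested on an arbitrary bounded subset of $\SSS^{\ast}(\RR^d)$ independent of $f$, the same triple $(P(D),u,\psi)$ works for all $f\in B'$, and the extra care needed is precisely what is already exercised in Lemma \ref{lemma4.2} and in the proof of Theorem \ref{karak}.
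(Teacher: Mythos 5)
Your architecture coincides with the paper's: $(i)\Rightarrow(ii)$ is the continuity of $\iota$, $(ii)\Rightarrow(iii)$ is a uniform version of $(ii)\Rightarrow(iv)$ of Theorem \ref{karak}, the passage to $(iv)$ is the double-parametrix computation of $(ii)\Rightarrow(v)$ of that theorem (you merely apply the second parametrix to the coefficients $g,g_1$ instead of to $f$ itself, which is the same calculation reorganized), and $(iv)\Rightarrow(i)$ is the direct estimate. One point in your write-up of $(ii)\Rightarrow(iii)$ does not stand as stated: in the Roumieu case the parametrix of Komatsu produces $u\in\DD^{\{M_p\}}_{\Omega,(r_p)}$ only, and such a $u$ is in general \emph{not} an element of $\DD^{\{M_p\}}(\RR^d)$, so $\{\check u\}$ is not a bounded subset of $\SSS^{\{M_p\}}(\RR^d)$ and hypothesis $(ii)$ cannot be applied to it directly. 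The mechanism the paper uses instead is: boundedness of $\iota(B')$ in $\mathcal{L}_b(\SSS^{\ast}(\RR^d),E')$ gives, by barrelledness of $\DD^{\{M_p\}}_K$, a single $(k_p)\in\mathfrak{R}$ and $C>0$ with $\|f*\check\varphi\|_{E'}\leq C\|\varphi\|_{K,(k_p)}$ for all $f\in B'$ and $\varphi\in\DD^{\{M_p\}}_K$; this estimate is then extended to $\varphi\in\DD^{\{M_p\}}_{\Omega,(r_p)}$ (with $r_p=k_{p-1}/H$) by the $\delta_j$-regularization argument of Proposition \ref{S'}, and only afterwards is the parametrix run for that $(r_p)$. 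You flag exactly this obstacle in your closing paragraph and point to the right tools, so this is a matter of execution rather than a missing idea; the same caveat applies to your $(iii)\Rightarrow(iv)$, where $v$ must be taken in $\DD^{\{M_p\}}_{\Omega,(r_p)}$ with $(r_p)$ chosen via Lemma \ref{lemma4.2} so that $\DD^{\{M_p\}}_{\Omega,(r_p)}\subseteq E\cap\check{E}$ (needed both for $v*g\in UC_{\omega}$ and for the uniform bound $\|v*g\|_{\infty,\omega}\leq\|\check v\|_{E}\|g\|_{E'}$), rather than in $\DD^{\{M_p\}}(\RR^d)$.
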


\begin{proof} $(i)\Rightarrow(ii)$. It follows from continuity of the mapping $\iota$.

 $(ii)\Rightarrow (iii)$. Let $\Omega$ be bounded open symmetric neighborhood of $0$ in $\RR^d$ and set $K=\overline{\Omega}$. Let $\iota (B')$ be bounded in $\mathcal{S}'^*(\mathbb{R}^{d},E')=\mathcal{L}_b\left(\SSS^*\left(\RR^d\right), E'\right)$. Then it is equicontinuous subset of $\mathcal{L}_b\left(\DD^*_K, E'\right)$. We will continue the proof in the $\{M_p\}$ case, the $(M_p)$ case is similar. There exist $(k_p)\in\mathfrak{R}$ and $C>0$ such that $\|\langle \mathbf{f},\varphi\rangle\|_{E'}\leq C\|\varphi\|_{K,(k_p)}$ for all $\mathbf{f}\in \iota(B')$ and $\varphi\in\DD^{\{M_p\}}_K$, i.e., $\|f*\check{\varphi}\|_{E'}\leq C\|\varphi\|_{K,(k_p)}$ for all $f\in B'$ and $\varphi\in\DD^{\{M_p\}}_K$. By a similar technique as in the proof of Proposition \ref{S'}, one obtains that there exists $(r_p)\in \mathfrak{R}$ such that $\|f*\check{\varphi}\|_{E'}\leq C\|\varphi\|_{K,(k_p)}$ for all $f\in B'$, $\varphi\in \DD^{\{M_p\}}_{\Omega, (r_p)}$. For the fixed $(r_p)\in\mathfrak{R}$, by the parametrix of Komatsu, there exist an ultradifferential operator $P(D)$ of class $\{M_p\}$, $u\in\DD^{\{M_p\}}_{\Omega, (r_p)}$ and $\psi\in \DD^{\{M_p\}}(\Omega)$ such that $f=P(D)(f*u)+f*\psi$. By what we proved above $\{f*u|\, f\in B'\}$ and $\{f*\psi|\, f\in B'\}$ are bounded in $E'$ and $(iii)$ follows.

 $(ii)\Rightarrow (iv)$. Proceed as in $(ii)\Rightarrow (iii)$ and then use the same technique as in the proof of $(ii)\Rightarrow (v)$ of Theorem \ref{karak}.

  $(iii)\Rightarrow (i)$ and $(iv)\Rightarrow (i)$ are obvious.
\end{proof}

\begin{co}\label{cor:sequence}
Let $\left\{f_{j}\right\}_{j=0}^{\infty}\subseteq\mathcal{D}_{E'_{\ast}}'^*$ $($or similarly, a filter with a countable or bounded basis$)$. The following three statements are equivalent:
\begin{itemize}
\item [$(i)$]  $\left\{f_{j}\right\}_{j=0}^{\infty}$ is (strongly) convergent in $\mathcal{D}_{E'_{\ast}}'^*$.
\item [$(ii)$]  $\left\{\iota(f_{j})\right\}_{j=0}^{\infty}$ is convergent in $\mathcal{S}'^*(\mathbb{R}^{d},E')$ $($or
    equivalently in $\mathcal{S}'^*(\mathbb{R}^{d},E'_{\ast})$$)$.
\item [$(iii)$] There exist convergent sequences $\{g_j\}_j,\{\tilde{g}_j\}_j$ in $E'$ and an ultradifferential operator $P(D)$ of class $\ast$ such that each $f_j=P(D) g_j+\tilde{g}_j$.
\item [$(iv)$] There exist $N\in\ZZ_+$, sequences $\{g^{(k)}_j\}_j$, $k=1,...,N$, in $E'_{\ast}\cap UC_{\omega}$ each convergent in $E'_*$ and in $L^{\infty}_{\omega}$ and ultradifferential operators $P_k(D)$, $k=1,...,N$, of class $\ast$ such that $\ds f_j=\sum_{k=1}^{N}P_k(D) g^{(k)}_j$. (If $E$ is reflexive one may choose $g^{(k)}_j\in E'\cap C_{\omega}$.)
\end{itemize}
\end{co}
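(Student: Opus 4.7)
The strategy mirrors the cycle of implications used in Corollary \ref{cor:bounded}, but we track convergence in place of boundedness at every stage. I would prove the chain $(i) \Rightarrow (ii) \Rightarrow (iii) \Rightarrow (iv) \Rightarrow (i)$.

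For $(i) \Rightarrow (ii)$, continuity of the embedding $\iota$ established before Corollary \ref{cor:bounded} gives the conclusion immediately; the parenthetical equivalence uses that $\iota(\mathcal{D}'^*_{E'_{*}})$ lies in $\mathcal{S}'^*(\RR^d, E'_{*})$, a closed subspace of $\mathcal{S}'^*(\RR^d, E')$. For $(ii) \Rightarrow (iii)$ I would argue that a convergent sequence (or a filter with countable, resp.\ bounded, basis) is bounded, so Corollary \ref{cor:bounded} supplies equicontinuity of $\{\iota(f_j)\}$ as operators $\DD^*_K \to E'$, and then the parametrix argument of that corollary yields a \emph{single} pair $u \in \DD^{(M_p)}_{\Omega,r}$ (resp.\ $u\in \DD^{\{M_p\}}_{\Omega,(r_p)}$), $\psi \in \DD^*(\Omega)$, and a single ultradifferential operator $P(D)$ such that
\[
f_j = P(D)(f_j*u) + f_j*\psi
\]
for every $j$. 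Since $\check u,\check \psi \in \SSS^*(\RR^d)$ and $\iota(f_j)$ converges in $\mathcal{L}_b(\SSS^*(\RR^d),E')$, evaluating at these fixed test elements produces convergence of $g_j := f_j*u$ and $\tilde g_j := f_j*\psi$ in $E'$, which is exactly $(iii)$.

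For $(iii) \Rightarrow (iv)$ I would invoke the parametrix of Komatsu a second time, using Lemma \ref{lemma4.2} to select compactly supported $u',\psi' \in E\cap E'_{*}$ and an ultradifferential operator $Q(D)$ with $\delta = Q(D)u' + \psi'$. Substituting and distributing (as in $(ii)\Rightarrow(v)$ of Theorem \ref{karak}), each $f_j$ becomes a finite sum of operators applied to convolutions of $g_j$ and $\tilde g_j$ with $u', \psi'$. By the definition of $E'_{*} = L^{1}_{\check\omega}*E'$ (Definition \ref{goodspace}) and Proposition \ref{convolution E E'}, these convolutions lie in $E'_{*}\cap UC_\omega$. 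Continuity of $*:E'\times L^1_{\check\omega}\to E'$ and of $*:E'\times \check E\to UC_\omega$ then propagates convergence of $g_j$ and $\tilde g_j$ in $E'$ to convergence of the new factors in $E'_{*}$ (as a closed subspace of $E'$) and in $L^\infty_\omega \supseteq UC_\omega$. Finally, $(iv)\Rightarrow(i)$ is a direct duality computation: for a bounded $B \subseteq \DD^*_E$, continuity of each $P_k(D)$ on $\DD^*_E$ bounds ${}^t P_k(D) B$ in $E$, whence
\[
\sup_{\varphi\in B}\bigl|\langle f_j - f_{j'},\varphi\rangle\bigr| \leq \sum_{k=1}^N \|g_j^{(k)} - g_{j'}^{(k)}\|_{E'}\sup_{\varphi\in B}\|{}^t P_k(D)\varphi\|_E \longrightarrow 0,
\]
proving strong convergence in $\mathcal{D}'^*_{E'_{*}}$.

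The main obstacle is step $(ii)\Rightarrow(iii)$: one needs parametrix data $(u,\psi)$ that works \emph{uniformly} for every index $j$, so that convergence in the operator space $\mathcal{S}'^*(\RR^d,E')$ reduces to convergence of the two fixed convolutions $f_j*u$ and $f_j*\psi$ in $E'$. This uniformity rests on equicontinuity of the convergent family, which is where the boundedness hypothesis on the filter basis (automatic in the sequence case) is essential; once it is secured, the rest of the argument is a systematic transport of the parametrix machinery through the convolution-continuity properties of $E$ and $E'$ already established in Sections \ref{UTIB} and \ref{tspace}.
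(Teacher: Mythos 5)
Your proof follows exactly the route the paper intends: the published proof of this corollary is omitted with the remark that it is ``similar to the proof of the above corollary,'' and your argument is precisely the convergence-tracking version of the proof of Corollary \ref{cor:bounded} (equicontinuity from boundedness, a single parametrix valid for all indices, iteration of the parametrix to reach $(iv)$, and a direct duality estimate for $(iv)\Rightarrow(i)$). The identification of the uniform parametrix as the crux is also correct.

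One step is stated inaccurately, though it is repairable with tools you already invoke. In $(ii)\Rightarrow(iii)$ you write that $\check u,\check\psi\in\SSS^*(\RR^d)$ and then evaluate the convergent net $\iota(f_j)\in\mathcal{L}_b(\SSS^*(\RR^d),E')$ at these elements. For $\psi\in\DD^*(\Omega)$ this is fine, but the parametrix function $u$ lies only in $\DD^{(M_p)}_{\Omega,r}$ (resp.\ $\DD^{\{M_p\}}_{\Omega,(r_p)}$), which is strictly larger than $\DD^{(M_p)}_K$ (resp.\ $\DD^{\{M_p\}}_K$) and is \emph{not} contained in $\SSS^*(\RR^d)$; so $f_j*u$ is not literally a value of $\iota(f_j)$ on a fixed test function. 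The correct argument, as in the proofs of Proposition \ref{S'} and Corollary \ref{cor:bounded}, is to use the equicontinuity of $\{\varphi\mapsto f_j*\check\varphi\}$ on $\DD^*_K$ to extend the uniform estimates to $\DD^{(M_p)}_{\Omega,r}$ (resp.\ $\DD^{\{M_p\}}_{\Omega,(r_p)}$) by mollification $u*\delta_m\to u$, and then combine equicontinuity with convergence of $f_j*\check\varphi$ for $\varphi\in\DD^*_K$ (dense in the relevant class) and completeness of $E'$ to conclude that $f_j*u$ converges in $E'$. With that substitution the proof is complete and coincides with the paper's intended argument.
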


\begin{proof} The proof is similar to the proof of the above corollary and we omit it.
\end{proof}

Observe that Corollaries \ref{cor:bounded} and \ref{cor:sequence} are still valid if  $\mathcal{S}'^*(\RR^d)$ is replaced by
$\mathcal{D}'^*(\RR^d)$.

 At the beginning of Section \ref{tspace}, we defined the spaces $\tilde{\DD}^{\{M_p\},(r_p)}_E$ and $\tilde{\DD}^{\{M_p\}}_E$. As we saw there $\DD^{\{M_p\}}_E$ and $\tilde{\DD}^{\{M_p\}}_E$ are equal as sets and the former has a stronger topology than the latter. In fact we will prove that these are also topologically isomorphic.

\begin{te}\label{1517}
The spaces $\DD^{\{M_p\}}_E$ and $\tilde{\DD}^{\{M_p\}}_E$ are isomorphic as l.c.s.
\end{te}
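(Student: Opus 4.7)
The already-established continuous inclusion $\DD^{\{M_p\}}_E\hookrightarrow \tilde{\DD}^{\{M_p\}}_E$ is bijective as a set map, so it remains only to prove that its inverse is also continuous; equivalently, that every continuous seminorm $p$ on $\DD^{\{M_p\}}_E$ is dominated by a defining seminorm $\|\cdot\|_{E,(r_p)}$ of $\tilde{\DD}^{\{M_p\}}_E$. The plan is to combine the bornological character of $\DD^{\{M_p\}}_E$ with the uniform structural representation of bounded subsets of $\DD'^{\{M_p\}}_{E'_\ast}$ furnished by Corollary \ref{cor:bounded}(iii).

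Since $\DD^{\{M_p\}}_E$ is an inductive limit of $(B)$-spaces it is bornological, hence quasi-barreled; the equicontinuous subsets of its dual therefore coincide with the strongly bounded ones, and the continuous seminorm $p$ admits the representation $p(\varphi)=\sup_{f\in B'}|\langle f,\varphi\rangle|$ for some set $B'$ bounded in $\DD'^{\{M_p\}}_{E'_\ast}$. Corollary \ref{cor:bounded}(iii) then provides the crucial uniformity: there exist a bounded subset $\tilde B\subseteq E'$ and a \emph{single} ultradifferential operator $P(D)$ of class $\{M_p\}$ such that every $f\in B'$ can be written $f=P(D)g+g_1$ with $g,g_1\in\tilde B$. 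Setting $K=\sup_{h\in\tilde B}\|h\|_{E'}<\infty$, this yields the uniform-in-$f$ estimate
\[ |\langle f,\varphi\rangle|=|\langle g,P(-D)\varphi\rangle+\langle g_1,\varphi\rangle|\le K\bigl(\|P(-D)\varphi\|_E+\|\varphi\|_E\bigr). \]

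Since $P(-D)$ is again an ultradifferential operator of class $\{M_p\}$, Proposition \ref{udozc} ensures that it acts continuously on $\tilde{\DD}^{\{M_p\}}_E$: fixing any $(l_p)\in\mathfrak{R}$, there exist $(r_p)\in\mathfrak{R}$ and $C>0$ with $\|P(-D)\varphi\|_{E,(l_p)}\le C\|\varphi\|_{E,(r_p)}$. Because $\|\cdot\|_E$ is majorised by $\|\cdot\|_{E,(l_p)}$ (it is the $\alpha=0$ contribution), this gives $\|P(-D)\varphi\|_E\le C\|\varphi\|_{E,(r_p)}$, and after replacing $(r_p)$ by $(\max\{r_p,1\})$ if necessary we also have $\|\varphi\|_E\le\|\varphi\|_{E,(r_p)}$. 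Substituting into the previous display gives $p(\varphi)\le K(C+1)\|\varphi\|_{E,(r_p)}$, so $p$ is continuous on $\tilde{\DD}^{\{M_p\}}_E$. Combined with the reverse continuity, this establishes the claimed topological isomorphism.

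The decisive and most delicate point is the uniformity in Corollary \ref{cor:bounded}(iii): without one and the same operator $P(D)$ and one and the same bounded set $\tilde B\subseteq E'$ serving for the entire family $B'$, the estimate on $|\langle f,\varphi\rangle|$ would depend on $f$ and one could not collapse $\sup_{f\in B'}$ into a single projective seminorm of $\tilde{\DD}^{\{M_p\}}_E$. The whole dual-side machinery developed in Section \ref{subsection DE} --- the structural Theorem \ref{karak}, the embedding into $\mathcal{S}'^\ast(\RR^d,E')$, and the bounded-set characterisation of Corollary \ref{cor:bounded} --- is precisely what makes this uniformity available; any attempt to bypass it by proving directly that $\tilde{\DD}^{\{M_p\}}_E$ is bornological would likely amount to the same Komatsu diagonalisation in disguise.
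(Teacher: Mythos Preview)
Your proof is correct and follows essentially the same strategy as the paper's: represent a continuous seminorm on $\DD^{\{M_p\}}_E$ as $\sup_{f\in B'}|\langle f,\cdot\rangle|$ over a bounded set in the dual, invoke the uniform structural representation of $B'$ from Corollary \ref{cor:bounded}, and conclude via the continuity of the resulting ultradifferential operators on $\tilde{\DD}^{\{M_p\}}_E$ (Proposition \ref{udozc}). The only cosmetic differences are that the paper uses part $(iv)$ of Corollary \ref{cor:bounded} where you use part $(iii)$, and your replacement of $(r_p)$ by $(\max\{r_p,1\})$ is unnecessary since the $\alpha=0$ term already gives $\|\varphi\|_E\le\|\varphi\|_{E,(r_p)}$ for any $(r_p)\in\mathfrak{R}$.
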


\begin{proof} By the above considerations, it is enough to prove that the topology of $\tilde{\DD}^{\{M_p\}}_E$ is stronger than the topology of $\DD^{\{M_p\}}_E$. Let $V$ be a neighborhood of zero in $\DD^{\{M_p\}}_E$. Since $\DD^{\{M_p\}}_E$ is complete and barreled, its topology is in fact the topology $b\left(\DD'^{\{M_p\}}_{E'_*}, \DD^{\{M_p\}}_E\right)$. Hence, we can assume that $V=B^{\circ}$ for a bounded set $B$ in $\DD'^{\{M_p\}}_{E'_*}$ ($B^{\circ}$ is the polar of $B$), that is, $\ds V=\left\{\varphi\in\DD^{\{M_p\}}_E\Big|\, \sup_{T\in B}|\langle T,\varphi\rangle|\leq 1\right\}$. By Corollary \ref{cor:bounded} there exists $C>0$ and a finite set $J$ such that every $T\in B$ admits a representation (\ref{eq:representation}) with continuous functions $f_k\in E'_{\ast}\cap UC_{\omega}$ satisfying the uniform bounds $\|f_k\|_{E'}\leq C$. Since $P_k(D)$ are continuous on $\tilde{\DD}^{\{M_p\}}_E$ (Proposition \ref{udozc}), there exist $(r_p)\in\mathfrak{R}$ and $C_1>0$ such that $\|P_k(-D)\varphi\|_E\leq C_1 \|\varphi\|_{E,(r_p)}$ for all $k\in J$, $\varphi\in \tilde{\DD}^{\{M_p\}}_E$. Set $N=|J|$ and let $W=\left\{\varphi\in\tilde{\DD}^{\{M_p\}}_E\big|\, \|\varphi\|_{E,(r_p)}\leq 1/(CC_1N)\right\}$ be a neighborhood of zero in $\tilde{\DD}^{\{M_p\}}_E$. If $\varphi\in W$, then for $T\in B$ one easily obtains $|\langle T,\varphi\rangle|\leq 1$, that is, $\varphi\in V$. Hence, we obtain the desired result.
\end{proof}

When $E$ is reflexive, the space $\mathcal{D}^*_{E}$ is also reflexive. Furthermore, we have:

\begin{pro}\label{prop:reflexive}
If $E$ is reflexive, then $\mathcal{D}^{(M_p)}_{E}$ and $\mathcal{D}'^{\{M_p\}}_{E'}$ are $(FS^*)$-spaces, and $\mathcal{D}^{\{M_p\}}_{E}$ and $\mathcal{D}'^{(M_p)}_{E}$ are $(DFS^*)$-spaces. Consequently, they are reflexive. In addition, $\mathcal{S}^*(\mathbb{R}^{d})$ is dense in $\mathcal{D}'^*_{E'}$.
\end{pro}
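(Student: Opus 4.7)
The plan is to reduce the proposition to a single analytic statement: for every $0<m_1<m_2$, the continuous inclusion $i:\mathcal{D}^{\{M_p\},m_2}_E \hookrightarrow \mathcal{D}^{\{M_p\},m_1}_E$ is a weakly compact operator. Granting this, the projective description $\ds\mathcal{D}^{(M_p)}_E=\lim_{\substack{\longleftarrow\\ m\to\infty}}\mathcal{D}^{\{M_p\},m}_E$ is an $(FS^*)$-space and the inductive description $\ds\mathcal{D}^{\{M_p\}}_E=\lim_{\substack{\longrightarrow\\ m\to 0}}\mathcal{D}^{\{M_p\},m}_E$ is a $(DFS^*)$-space by the very definitions of these classes. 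The strong-dual assertions for $\mathcal{D}'^{\{M_p\}}_{E'}$ and $\mathcal{D}'^{(M_p)}_{E'}$ then follow automatically (noting $E'_*=E'$ for reflexive $E$, by Theorem \ref{thgoodspace}$(iv)$): strong duality exchanges $(FS^*)$ and $(DFS^*)$, and both classes are reflexive.

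To prove the weak compactness, I will realize each $\mathcal{D}^{\{M_p\},m}_E$ as a closed subspace of $\ell^\infty(\NN^d, E)$ through the isometric embedding $J_m\varphi=(m^{|\alpha|}D^\alpha\varphi/M_\alpha)_{\alpha\in\NN^d}$; closedness follows because the defining identity $a_\alpha=m^{|\alpha|}D^\alpha a_0/M_\alpha$ passes to limits through the continuous embedding $E\hookrightarrow\mathcal{D}'^*(\RR^d)$ and the continuity of distributional derivatives. If $B$ denotes the unit ball of $\mathcal{D}^{\{M_p\},m_2}_E$, then $J_{m_1}(i(B))$ lies in the ``decaying'' set $K:=\{(b_\alpha)\in\ell^\infty(\NN^d,E): \|b_\alpha\|_E\le (m_1/m_2)^{|\alpha|}\}$. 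By reflexivity of $E$ each factor ball is weakly compact (Kakutani), so Tychonoff's theorem makes $K$ compact in the product-of-weak topology on $E^{\NN^d}$.

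The decisive step I foresee will be to check that on $K$ this componentwise weak topology coincides with the weak topology $K$ inherits from $c_0(\NN^d, E)\supset K$, whose dual identifies with $\ell^1(\NN^d, E^*)$ (a valid identification since $E^*$ enjoys the Radon-Nikodym property when $E$ is reflexive). The non-trivial inclusion reduces to showing that any $\lambda=(\lambda_\alpha)\in\ell^1(\NN^d,E^*)$ acts continuously on $K$ in the componentwise weak topology; the geometric decay $(m_1/m_2)^{|\alpha|}$ uniformly available on $K$ allows the familiar head/tail splitting of $\sum_\alpha\langle\lambda_\alpha,b_\alpha-b^{(0)}_\alpha\rangle$, handled by dominated convergence. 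Consequently $K$ is weakly compact in $c_0(\NN^d,E)$, hence in $\ell^\infty(\NN^d,E)$ through the continuous embedding. Since $J_{m_1}$ has weakly closed image in $\ell^\infty(\NN^d,E)$ (a norm-closed convex subspace is weakly closed by Mazur) and $J_{m_1}$ is a weak-to-weak homeomorphism onto its image, the relative weak compactness of $J_{m_1}(i(B))$ descends to that of $i(B)$ in $\mathcal{D}^{\{M_p\},m_1}_E$, closing the argument. I expect this coincidence-of-topologies step to be the main technical obstacle, and this is precisely where reflexivity of $E$ is used in two distinct ways (weak compactness of each ball, and RNP of $E^*$ for the $c_0$-duality).

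For the final assertion, once reflexivity of $\mathcal{D}'^*_{E'}$ has been established, the weak and strong closures of the convex subspace $\mathcal{S}^*(\RR^d)$ coincide (Mazur), so it suffices to show $\mathcal{S}^*(\RR^d)$ is weakly dense in $\mathcal{D}'^*_{E'}$. But any $\varphi\in(\mathcal{D}'^*_{E'})'=\mathcal{D}^*_E$ annihilating $\mathcal{S}^*(\RR^d)$ would in particular annihilate $\mathcal{D}^*(\RR^d)\subset\mathcal{S}^*(\RR^d)$, forcing $\varphi=0$ as an element of $\mathcal{D}'^*(\RR^d)$ and hence as an element of $\mathcal{D}^*_E\subset\mathcal{D}'^*(\RR^d)$, completing the proof.
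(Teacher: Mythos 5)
Your proposal is correct, but it reaches the key point --- weak compactness of the linking maps --- by a genuinely different route than the paper. The paper's proof replaces each sup-norm step space $\mathcal{D}^{\{M_p\},m}_E$ by the equivalent-system Hilbert-type space $\tilde{\tilde{\mathcal{D}}}^{\{M_p\},m}_E$ with norm $\bigl(\sum_{\alpha}m^{2|\alpha|}M_{\alpha}^{-2}\|D^{\alpha}\varphi\|_E^2\bigr)^{1/2}$, using the geometric gain between consecutive indices only to show that the two systems interlace; each new step embeds isometrically as a closed subspace of the reflexive space $l^2_m(E)$, so it is itself reflexive and the weak compactness of the linking maps is automatic (every bounded operator between reflexive Banach spaces is weakly compact). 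You instead keep the $\ell^\infty$-type steps and prove directly that the inclusion $\mathcal{D}^{\{M_p\},m_2}_E\rightarrow\mathcal{D}^{\{M_p\},m_1}_E$ is weakly compact, exploiting the decay $(m_1/m_2)^{|\alpha|}$ via Tychonoff on a product of weakly compact balls and the identification of the componentwise-weak and $\sigma(c_0(\NN^d,E),\ell^1(\NN^d,E'))$ topologies on the decaying set $K$; this is longer but self-contained and makes the role of the index gap explicit, whereas the paper's argument additionally yields reflexivity of each step space. Two small remarks: the Radon--Nikod\'{y}m property is not needed, since $c_0(I,E)'=\ell^1(I,E')$ holds for every Banach space $E$ (RNP would matter for $\ell^1(I,E)'$); and in the final density argument Mazur's theorem is superfluous, because the weak and original closures of a \emph{linear subspace} coincide in any locally convex space --- reflexivity is only needed, as you do use it, to identify the dual of $\mathcal{D}'^*_{E'}$ with $\mathcal{D}^*_{E}$ before applying Hahn--Banach, which is exactly the paper's one-line argument for this part.
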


\begin{proof} Let $\tilde{\tilde{\DD}}^{\{M_p\},m}_E$ be the $(B)$-space of all $\varphi\in\DD'^*\left(\RR^d\right)$ such that $D^{\alpha}\varphi\in E$, $\forall\alpha\in\NN^d$ and
\beqs
\|\varphi\|_{E,m}=\left(\sum_{\alpha}\frac{m^{2|\alpha|}}{M_{\alpha}^2}\|D^{\alpha}\varphi\|^2_E\right)^{1/2}<\infty.
\eeqs
Then, we have the obvious continuous inclusions $\tilde{\tilde{\DD}}^{\{M_p\},m}_E\rightarrow \DD^{\{M_p\},m}_E$ and $\DD^{\{M_p\},2m}_E\rightarrow \tilde{\tilde{\DD}}^{\{M_p\},m}_E$. Hence, $\ds\mathcal{D}_E^{(M_p)}=\lim_{\substack{\longleftarrow\\ m\rightarrow\infty}}\tilde{\tilde{\mathcal{D}}}_E^{\{M_p\},m}$ and $\ds\mathcal{D}_E^{\{M_p\}}=\lim_{\substack{\longrightarrow\\ m\rightarrow 0}}\tilde{\tilde{\mathcal{D}}}_E^{\{M_p\},m}$. If $l^2_m(E)$ is the $(B)$-space of all $(\psi_{\alpha})_{\alpha\in\NN^d}$ with $\psi_{\alpha}\in E$ and norm $\ds\|(\psi_{\alpha})_{\alpha}\|_{l^2_m(E)}=\left(\sum_{\alpha\in\NN^d} \frac{m^{2|\alpha|}}{M^2_{\alpha}}\|\psi_{\alpha}\|_E^2\right)^{1/2}$, then $l^2_m(E)$ is reflexive since $E$ is (cf. \cite[Thm. 2, p. 360]{kothe11}). Observe that $\tilde{\tilde{\DD}}^{\{M_p\},m}_E$ is isometrically injected into a closed subspace of $l^2_m(E)$ by the mapping $\varphi\mapsto \left(D^{\alpha}\varphi\right)_{\alpha}$; hence, $\tilde{\tilde{\DD}}^{\{M_p\},m}_E$ is reflexive. Thus, $\DD^{(M_p)}_E$ is an $(FS^*)$-space and $\DD^{\{M_p\}}_E$ is a $(DFS^*)$-space. In particular, they are reflexive, $\DD'^{(M_p)}_E$ is a $(DFS^*)$-space, and $\DD'^{\{M_p\}}_E$ is an $(FS^*)$-space. Now, the density of $\SSS^*(\RR^d)$ in $\DD'^*_{E'}$ is an easy consequence of the Hahn-Banach theorem.
\end{proof}

%_________________________________________________________________________-
%
%__________________________________________________________________________

\section{The weighted spaces $\mathcal{D}^{\ast}_{L^{p}_{\eta}}$ and $\mathcal{D}'^{\ast}_{L^{p}_{\eta}}$}
\label{examples}
As examples, in this section we discuss the weighted spaces $\mathcal{D}^{\ast}_{L^{p}_{\eta}}$ and $\mathcal{D}'^{\ast}_{L^{p}_{\eta}},$ which are particular examples of the spaces $\mathcal{D}^*_{E}$ and $\mathcal{D}'^*_{E'_{\ast}}$. They turn out to be important in the study of properties of the general $\mathcal{D}'^*_{E'_{\ast}}$ and general convolution in $\mathcal{D}'^{\ast}(\mathbb{R}^{d})$ (cf. Section \ref{section convolution}).

Let $\eta$ be an \emph{ultrapolynomially bounded weight of class $\ast$}, that is, a (Borel) measurable function $\eta:\mathbb{R}^d\rightarrow (0,\infty)$ that fulfills the requirement $\eta(x+h)\leq C\eta(x)e^{M(\tau|h|)}$, for some $C,\tau>0$ (for every $\tau>0$ and a corresponding $C=C_{\tau}>0$). An interesting nontrivial example in the $(M_p)$ case is given by the function $\eta(x)=e^{\tilde{\eta}(|x|)}$ where $\tilde{\eta}:[0,\infty)\rightarrow[0,\infty)$ is defined by $\ds\tilde{\eta}(\rho)=\rho\int_{\rho}^{\infty}\frac{M(s)}{s^2}ds$. To see this, observe that $\tilde{\eta}$ is a differentiable function with nonnegative monotonically decreasing derivative. Hence $\tilde{\eta}$ is a concave monotonically increasing function and $\tilde{\eta}(0)=0$. Also, it is easy to see that $M(\rho)\leq \tilde{\eta}(\rho)$ and $\tilde{\eta}(\rho+\lambda)\leq \tilde{\eta}(\rho)+\tilde{\eta}(\lambda)$, for all $\rho,\lambda>0$. By $(M.3)$ and \cite[Prop. 4.4]{Komatsu1} there exist $C,C_1>0$ such that $\tilde{\eta}(\rho)\leq M(C\rho)+C_1$, for all $\rho>0$. For the $\{M_p\}$ case take $(r_p)\in\mathfrak{R}$ and perform the same construction with the sequence $N_p$ defined by $N_0=1$ and $N_p=M_p\prod_{j=1}^p r_j$, $p\in\ZZ_+$, which obviously satisfies $(M.1)$ and $(M.3)$ since $M_p$ does.

 For $1\leq p<\infty$ we denote as $L^p_{\eta}$ the space of measurable functions $g$ such that $\|\eta g\|_{p}<\infty$. Clearly $L^p_{\eta}$ are translation-invariant spaces of tempered ultradistributions for $p\in[1,\infty)$. In the case $p=\infty$, we define $L^\infty_{\eta}$ via the norm $\|g/\eta\|_{\infty}$; the axiom (I) clearly fails for $L^\infty_{\eta}$ since  $\mathcal{D}^*(\mathbb{R}^{d})$ is not dense in $L^{\infty}_{\eta}$. In the next considerations the number $q$ always stands for $p^{-1}+q^{-1}=1$ ($p\in[1,\infty]$). Of course $(L^{p}_{\eta})'=L^{q}_{\eta^{-1}}$ if $1< p<\infty$ and $(L^{1}_{\eta})'=L^{\infty}_{\eta}$. In view of Proposition \ref{thgoodspace}, the space $E'_{\ast}$ corresponding to $E=L^{p}_{\eta^{-1}}$ is $E'_{\ast}=L^{q}_{\eta}$ whenever $1<p<\infty$. On the other hand, $(iii)$ of Theorem \ref{thgoodspace} gives
that $E'_{\ast}=UC_{\eta}$ for  $E=L^{1}_{\eta}$, where $UC_{\eta}$ is defined as in (\ref{UC}) with $\omega$ replaced by
$\eta$. We will also consider the Banach space $\ds C_{\eta}=\left\{g\in C(\mathbb{R}^{d})\big|\, \lim_{|x|\to \infty}g(x)/\eta(x)=0\right\}\subset UC_{\eta}\subset L^{\infty}_{\eta}.$

The weight function of $L^{p}_{\eta}$ can be explicitly determined as in \cite[Prop. 10]{DPV}.

\begin{pro}\label{weight prop}
Let $\omega_{\eta}(h):= \operatorname*{ess}\sup_{x\in\mathbb{R}^{d}} \eta(x+h)/\eta(x)$. Then
$$\|T_{-h}\|_{\mathcal{L}(L^{p}_{\eta})}=
\begin{cases}
\omega_{\eta}(h) & \mbox{ if } p\in[1,\infty),
\\ \omega_{\eta}(-h)
  & \mbox{ if } p=\infty.
\end{cases}
$$
Consequently, the Beurling algebra associated to $L_{\eta}^{p}$ is $L_{\omega_{\eta}}^{1}$ if $p=[1,\infty)$ and $L^{1}_{\check{\omega}_{\eta}}$ if $p=\infty$.
\end{pro}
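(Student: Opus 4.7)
The plan is to evaluate $\|T_{-h}g\|_{L^p_\eta}$ directly via a change of variables and then sandwich the operator norm between the obvious pointwise upper bound and a matching lower bound obtained by testing against suitable characteristic-function-like elements.

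For $p\in[1,\infty)$ and $g\in L^p_\eta$, I would substitute $x\mapsto y+h$ to rewrite
\[
\|T_{-h}g\|^p_{L^p_\eta}=\int_{\mathbb{R}^d} \eta(x)^p |g(x-h)|^p\,dx=\int_{\mathbb{R}^d} \left(\frac{\eta(y+h)}{\eta(y)}\right)^{\!p} (\eta(y)|g(y)|)^p\,dy.
\]
Bounding the quotient pointwise by its essential supremum immediately gives $\|T_{-h}\|_{\mathcal{L}(L^p_\eta)}\leq \omega_\eta(h)$. For the reverse inequality, for each $\varepsilon>0$ I would select a Borel set $A\subseteq\mathbb{R}^d$ of finite positive Lebesgue measure on which $\eta(y+h)/\eta(y)\geq \omega_\eta(h)-\varepsilon$ (such $A$ exists by the definition of the essential supremum, intersecting a large enough ball with the full set to preserve positive and finite measure, and using that $\eta$ is locally bounded above and below by measurability and ultrapolynomial growth). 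Testing with $g:=\chi_A/\eta\in L^p_\eta$, which has norm $|A|^{1/p}$, yields $\|T_{-h}g\|_{L^p_\eta}\geq (\omega_\eta(h)-\varepsilon)|A|^{1/p}$, and letting $\varepsilon\to 0^+$ closes the argument.

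For $p=\infty$, the same substitution gives $\|T_{-h}g\|_{L^\infty_\eta}=\operatorname*{ess\,sup}_y |g(y)|/\eta(y+h)$, and the factorization $1/\eta(y+h)=(1/\eta(y))\cdot(\eta(y)/\eta(y+h))$ yields the upper bound $\omega_\eta(-h)\|g\|_{L^\infty_\eta}$, once one observes that $\operatorname*{ess\,sup}_y \eta(y)/\eta(y+h)=\omega_\eta(-h)$ by the substitution $y\mapsto y-h$ in the definition of $\omega_\eta$. For the matching lower bound I would test with $g:=\eta\chi_A$, where now $A$ is chosen of positive measure on which $\eta(y)/\eta(y+h)\geq \omega_\eta(-h)-\varepsilon$; this $g$ lies in $L^\infty_\eta$ with $\|g\|_{L^\infty_\eta}=1$, and the computation gives $\|T_{-h}g\|_{L^\infty_\eta}\geq \omega_\eta(-h)-\varepsilon$.

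The final assertion about the Beurling algebra is then immediate from the general definition $L^1_\omega$ with $\omega(h):=\|T_{-h}\|_{\mathcal{L}(E)}$ applied to the weight function just computed. There is no serious obstacle in any of this; the only mild subtlety is the selection of the test sets $A$ with the required measure-theoretic properties, which is handled by the standard manipulation of the essential supremum together with the local boundedness of $\eta$.
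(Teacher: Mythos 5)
Your argument is correct and is essentially the proof the paper intends: the paper simply defers to \cite[Prop. 10]{DPV}, which carries out the same change-of-variables computation, the pointwise upper bound by the essential supremum, and the matching lower bound via characteristic-function test elements supported on sets where the quotient $\eta(\cdot+h)/\eta(\cdot)$ nearly attains its essential supremum. The measure-theoretic selection of the test sets $A$ is handled exactly as you indicate, so there is nothing to add.
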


\begin{proof} See the proof of \cite[Prop. 10]{DPV}.
\end{proof}

Observe that when the logarithm of $\eta$ is a subadditive function with $\eta(0)=1$, one easily obtains from Proposition \ref{weight prop} that $\omega_{\eta}=\eta$ (a.e.).

Consider now the spaces $\mathcal{D}^*_{L^{p}_{\eta}}$ for $p\in[1,\infty]$ and $\tilde{\DD}^{\{M_p\}}_{L^{\infty}_{\eta}}$ defined as in Section \ref{tspace} by taking $E=L^{p}_{\eta}$. Once again, the case $p=\infty$ is an exception since $\mathcal{D}^*(\mathbb{R}^{d})$ is not dense in $\mathcal{D}^*_{L^{\infty}_{\eta}}$ nor in $\tilde{\DD}^{\{M_p\}}_{L^{\infty}_{\eta}}$. Nevertheless, we can repeat the proof of Proposition \ref{regular} to prove that $\DD^{\{M_p\}}_{L^{\infty}_{\eta}}$ is regular and complete. One can show that each ultradifferential operator of $\ast$ class acts continuously on $\DD^*_{L^{\infty}_{\eta}}$ and each ultradifferential operator of class $\{M_p\}$ acts continuously on $\tilde{\DD}^{\{M_p\}}_{L^{\infty}_{\eta}}$ (cf. the proof of Proposition \ref{udozc}). Obviously $\DD^{\{M_p\}}_{L^{\infty}_{\eta}}$ is continuously injected into $\tilde{\DD}^{\{M_p\}}_{L^{\infty}_{\eta}}$ and by using \cite[Lem. 3.4]{Komatsu3} and employing a similar technique as in the proof of Proposition \ref{regular}, one can prove that this inclusion is in fact surjective. We will also use the notation $\mathcal{B}^*_{\eta}$ for the space $\DD^*_{L^{\infty}_{\eta}}$ and we denote by $\dot{\mathcal{B}}^*_{\eta}$ the closure of $\DD^*(\RR^d)$ in $\mathcal{B}^*_{\eta}$. We denote by $\dot{\tilde{\mathcal{B}}}^{\{M_p\}}_{\eta}$ the closure of $\DD^{\{M_p\}}(\RR^d)$ in $\tilde{\DD}^{\{M_p\}}_{L^{\infty}_\eta}$. It is important to notice that in the case $\eta=1$ these spaces were considered in \cite{Pilipovic} (see also \cite{PilipovicK}).

We immediately see that $\dot{\mathcal{B}}^{(M_p)}_{\eta}=\mathcal{D}^{(M_p)}_{C_{\eta}}$. In the $\{M_p\}$ case this is not trivial. The following theorem gives that result.

\begin{te}\label{edn}
The spaces $\mathcal{D}^{\{M_p\}}_{C_{\eta}}$, $\dot{\mathcal{B}}^{\{M_p\}}_{\eta}$ and $\dot{\tilde{\mathcal{B}}}^{\{M_p\}}_{\eta}$ are isomorphic to each other as l.c.s..
\end{te}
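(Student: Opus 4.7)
The plan is to deduce everything from Theorem \ref{1517} applied to $E=C_\eta$, which requires first checking that $C_\eta$ is itself a translation-invariant $(B)$-space of tempered ultradistributions of class $\{M_p\}$ in the sense of Definition \ref{def E}. Axioms (II) and (III) are immediate: translation invariance holds since $g(x+h)/\eta(x) = (g(x+h)/\eta(x+h))\cdot(\eta(x+h)/\eta(x))$, the first factor vanishing at infinity and the second being bounded by $\omega_\eta(h)$, and the weight of $C_\eta$ is dominated by $\omega_\eta$, which is ultrapolynomial by hypothesis on $\eta$. For axiom (I), the density $\mathcal{D}^{\{M_p\}}(\RR^d)\hookrightarrow C_\eta$ is obtained by cutoff and mollification: given $g\in C_\eta$, let $\chi_n\in\mathcal{D}^{\{M_p\}}$ be cutoffs equal to $1$ on $B(0,n)$ and let $\phi_\varepsilon\in\mathcal{D}^{\{M_p\}}$ be a mollifier; then $\chi_n g\to g$ in $C_\eta$ because $g/\eta\in C_0$, and $\phi_\varepsilon\ast(\chi_n g)\in\mathcal{D}^{\{M_p\}}$ converges uniformly to $\chi_n g$ on the common compact support, which suffices in $C_\eta$ since $\eta$ is bounded below on compact sets (apply the defining growth estimate with $h=-x$). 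Therefore Theorem \ref{1517} yields $\mathcal{D}^{\{M_p\}}_{C_\eta} = \tilde{\mathcal{D}}^{\{M_p\}}_{C_\eta}$ as l.c.s.

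Next I identify $\tilde{\mathcal{D}}^{\{M_p\}}_{C_\eta}$ with $\dot{\tilde{\mathcal{B}}}^{\{M_p\}}_\eta$, both as sets and topologically. If $\varphi\in\dot{\tilde{\mathcal{B}}}^{\{M_p\}}_\eta$, then $\varphi$ is approximable by elements of $\mathcal{D}^{\{M_p\}}$ in every $\|\cdot\|_{L^\infty_\eta,(r_p)}$ seminorm, so each $D^\alpha\varphi/\eta$ is a uniform limit of functions in $C_0$, hence $D^\alpha\varphi\in C_\eta$; together with the finiteness of $\|\varphi\|_{L^\infty_\eta,(r_p)}$ for all $(r_p)$ this places $\varphi$ in $\tilde{\mathcal{D}}^{\{M_p\}}_{C_\eta}$. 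Conversely, if $\varphi\in\mathcal{D}^{\{M_p\}}_{C_\eta}$, Proposition \ref{970} gives a $\mathcal{D}^{\{M_p\}}$-approximation in the topology of $\mathcal{D}^{\{M_p\}}_{C_\eta}$, which composed with the continuous inclusion $\mathcal{D}^{\{M_p\}}_{C_\eta}\to\tilde{\mathcal{D}}^{\{M_p\}}_{L^\infty_\eta}$ places $\varphi$ in $\dot{\tilde{\mathcal{B}}}^{\{M_p\}}_\eta$. The seminorms match trivially, since $\|D^\alpha\varphi\|_{C_\eta}=\|D^\alpha\varphi\|_{L^\infty_\eta}$ whenever $D^\alpha\varphi\in C_\eta$, so $\|\cdot\|_{C_\eta,(r_p)}=\|\cdot\|_{L^\infty_\eta,(r_p)}$ on the common set. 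Hence $\tilde{\mathcal{D}}^{\{M_p\}}_{C_\eta}=\dot{\tilde{\mathcal{B}}}^{\{M_p\}}_\eta$ as l.c.s.

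To incorporate $\dot{\mathcal{B}}^{\{M_p\}}_\eta$, record the obvious chain of continuous identity maps
\beqs
\mathcal{D}^{\{M_p\}}_{C_\eta}\hookrightarrow \dot{\mathcal{B}}^{\{M_p\}}_\eta \hookrightarrow \dot{\tilde{\mathcal{B}}}^{\{M_p\}}_\eta,
\eeqs
the first arising from $C_\eta\hookrightarrow L^\infty_\eta$ (so the corresponding $\|\cdot\|_{E,m}$ seminorms coincide on $\mathcal{D}^{\{M_p\}}_{C_\eta}$) and the second from the continuity of $\mathcal{D}^{\{M_p\}}_{L^\infty_\eta}\to\tilde{\mathcal{D}}^{\{M_p\}}_{L^\infty_\eta}$. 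By the previous two steps the composition is a topological isomorphism, and the intermediate topology on $\dot{\mathcal{B}}^{\{M_p\}}_\eta$ is squeezed between two identical ones, forcing equality. The main obstacle is precisely that Theorem \ref{1517} cannot be applied directly to $L^\infty_\eta$, since $\mathcal{D}^{\{M_p\}}(\RR^d)$ is not dense in $L^\infty_\eta$ and axiom (I) fails; the detour through the closed subspace $C_\eta$, where all hypotheses are satisfied, is what allows the topological information to be transferred back to the $L^\infty_\eta$-side via the coincidence of the $C_\eta$- and $L^\infty_\eta$-norms on the common set of functions whose derivatives already lie in $C_\eta$.
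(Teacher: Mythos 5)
Your proof is correct and follows essentially the same route as the paper's: apply Theorem \ref{1517} with $E=C_\eta$, identify $\tilde{\mathcal{D}}^{\{M_p\}}_{C_\eta}$ with $\dot{\tilde{\mathcal{B}}}^{\{M_p\}}_{\eta}$ via the density of $\mathcal{D}^{\{M_p\}}(\RR^d)$ and the coincidence of the $C_\eta$- and $L^\infty_\eta$-seminorms, and then squeeze $\dot{\mathcal{B}}^{\{M_p\}}_{\eta}$ between the two topologically isomorphic ends of the chain. The only (harmless) differences are that you verify axioms (I)--(III) for $C_\eta$ explicitly and replace the paper's argument via completeness of $\mathcal{D}^{\{M_p\}}_{C_\eta}$ by a direct proof of both set inclusions.
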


\begin{proof} By Proposition \ref{regular}, $\mathcal{D}^{\{M_p\}}_{C_{\eta}}$ is a complete barreled $(DF)$-space. First we prove that $\mathcal{D}^{\{M_p\}}_{C_{\eta}}$ and $\dot{\tilde{\mathcal{B}}}^{\{M_p\}}_{\eta}$ are isomorphic l.c.s.. Observe that $\mathcal{D}^{\{M_p\}}_{C_{\eta}}\subseteq \tilde{\DD}^{\{M_p\}}_{L^{\infty}_{\eta}}$. Moreover, by Theorem \ref{1517}, the topology of $\mathcal{D}^{\{M_p\}}_{C_{\eta}}$ is the same as the induced topology on $\mathcal{D}^{\{M_p\}}_{C_{\eta}}$ by $\tilde{\DD}^{\{M_p\}}_{L^{\infty}_{\eta}}$. Since $\DD^{\{M_p\}}(\RR^d)$ is dense in $\mathcal{D}^{\{M_p\}}_{C_{\eta}}$ and $\dot{\tilde{\mathcal{B}}}^{\{M_p\}}_{\eta}$ is the closure of $\DD^{\{M_p\}}(\RR^d)$ in the complete l.c.s. $\tilde{\DD}^{\{M_p\}}_{L^{\infty}_{\eta}}$, the spaces $\mathcal{D}^{\{M_p\}}_{C_{\eta}}$ and $\dot{\tilde{\mathcal{B}}}^{\{M_p\}}_{\eta}$ are isomorphic l.c.s. and the canonical inclusion $\mathcal{D}^{\{M_p\}}_{C_{\eta}}\rightarrow \tilde{\DD}^{\{M_p\}}_{L^{\infty}_{\eta}}$ gives the isomorphism. Now, observe that the inclusion $\mathcal{D}^{\{M_p\}}_{C_{\eta}}\rightarrow \DD^{\{M_p\}}_{L^{\infty}_{\eta}}$ is continuous. Since $\DD^{\{M_p\}}(\RR^d)$ is dense in $\mathcal{D}^{\{M_p\}}_{C_{\eta}}$ and $\dot{\mathcal{B}}^{\{M_p\}}_{\eta}$, $\mathcal{D}^{\{M_p\}}_{C_{\eta}}\subseteq \dot{\mathcal{B}}^{\{M_p\}}_{\eta}$ and the inclusion is continuous. Also, since the inclusion $\DD^{\{M_p\}}_{L^{\infty}_{\eta}}\rightarrow \tilde{\DD}^{\{M_p\}}_{L^{\infty}_{\eta}}$ is continuous and $\DD^{\{M_p\}}(\RR^d)$ is dense in $\dot{\mathcal{B}}^{\{M_p\}}_{\eta}$ and $\dot{\tilde{\mathcal{B}}}^{\{M_p\}}_{\eta}$, we obtain that $\dot{\mathcal{B}}^{\{M_p\}}_{\eta}\subseteq \dot{\tilde{\mathcal{B}}}^{\{M_p\}}_{\eta}$ and the inclusion is continuous. But, since we already proved that the inclusion $\mathcal{D}^{\{M_p\}}_{C_{\eta}}\rightarrow \dot{\tilde{\mathcal{B}}}^{\{M_p\}}_{\eta}$ is a topological isomorphism onto, we obtain that the inclusion $\mathcal{D}^{\{M_p\}}_{C_{\eta}}\rightarrow \dot{\mathcal{B}}^{\{M_p\}}_{\eta}$ is as well.
\end{proof}

Proposition \ref{smooth prop} together with the estimate (\ref{bounds}) in the $(M_p)$ case and with the estimate (\ref{boundss}) in the $\{M_p\}$ case implies $\mathcal{D}^*_{L^{p}_{\eta}}\hookrightarrow\dot{\mathcal{B}}^*_{\check{\omega}_{\eta}}$ for every $p\in[1,\infty)$. It follows from Proposition \ref{prop:reflexive} that $\mathcal{D}^{*}_{L^p_\eta}$ is reflexive when $p\in(1,\infty)$.

 In accordance with Section \ref{subsection DE}, the weighted spaces $\mathcal{D}'^*_{L_{\eta}^p}$ are defined as
$\mathcal{D}'^*_{L_{\eta}^p}= (\mathcal{D}^*_{L_{\eta^{-1}}^q})'$ where $p^{-1}+q^{-1}=1$ if $p\in (1,\infty]$; if $p=1$, then
$\mathcal{D}'^*_{L_{\eta}^1}=(\mathcal{D}^*_{C_{\eta}})'=(\dot{\mathcal{B}}^*_\eta)'$. We write
$\mathcal{B}'^*_{\eta}=\mathcal{D}'^*_{L^{\infty}_{\eta}}$ and $\dot{\mathcal{B}}'^*_{\eta}$ for the closure of
$\mathcal{D}^*(\mathbb{R}^{n})$ in $\mathcal{B}'^*_{\eta}$.

The dual of $E=C_{\eta}$ is the space $\mathcal{M}^1_{\eta}$ consisting of all elements $\nu\in \left(C_c\left(\RR^d\right)\right)'$ which are of the form $d\nu= \eta^{-1} d\mu$, for $\mu\in\mathcal{M}^1$ (i.e., a finite measure) and the norm is $\|\nu\|_{\mathcal{M}^1_{\eta}}=\|\mu\|_{\mathcal{M}^1}$. Observe that then $E'_{\ast}=L^1_{\eta}$. In this case, by using Theorem \ref{karak}, similarly as in the case of distributions (see \cite[p. 99]{S1955}, \cite[p. 196]{SchwartzV}), one can prove that the bidual of $\dot{\mathcal{B}}^{(M_p)}_{\eta}$ is isomorphic to $\DD^{(M_p)}_{L^{\infty}_\eta}$ as l.c.s. and that $\dot{\mathcal{B}}^{(M_p)}_{\eta}$ is a distinguished $(F)$-space, namely, $\DD'^{(M_p)}_{L^1_{\eta}}$ is barreled and bornological. In the $\{M_p\}$ case, observe that $\DD'^{\{M_p\}}_{L^1_{\eta}}$ is an $(F)$-space as the strong dual of a barreled $(DF)$-space. Moreover, we have the following theorem.

\begin{te}\label{te B}
The bidual of $\dot{\mathcal{B}}^{\{M_p\}}_{\eta}$ is isomorphic to $\DD^{\{M_p\}}_{L^{\infty}_{\eta}}$ as l.c.s.. Moreover $\DD^{\{M_p\}}_{L^{\infty}_{\eta}}$ and $\tilde{\DD}^{\{M_p\}}_{L^{\infty}_{\eta}}$ are isomorphic l.c.s..
\end{te}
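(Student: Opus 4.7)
The strategy is to realize both $\DD^{\{M_p\}}_{L^{\infty}_{\eta}}$ and $\tilde{\DD}^{\{M_p\}}_{L^{\infty}_{\eta}}$ as topological realizations of the strong bidual of $\dot{\mathcal{B}}^{\{M_p\}}_{\eta}$; both assertions of the theorem then follow at once. By Theorem \ref{edn}, $\dot{\mathcal{B}}^{\{M_p\}}_{\eta} \cong \DD^{\{M_p\}}_{C_{\eta}}$ as l.c.s., and since $C_{\eta}$ is a translation-invariant $(B)$-space of tempered ultradistributions with $(C_{\eta})'_{\ast} = L^1_{\eta}$, the results of Section \ref{subsection DE} give $(\dot{\mathcal{B}}^{\{M_p\}}_{\eta})'_b = \DD'^{\{M_p\}}_{L^1_{\eta}}$, an $(F)$-space. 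The crucial input is Corollary \ref{cor:bounded}$(iii)$ applied to $E = C_{\eta}$: every bounded $B' \subset \DD'^{\{M_p\}}_{L^1_{\eta}}$ admits a uniform representation $f = P(D)g + g_1$ with $g,g_1$ in a fixed bounded $\tilde{B} \subset L^1_{\eta}$ and a single ultradifferential operator $P(D)$ of class $\{M_p\}$ shared by all $f \in B'$.

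Define a canonical map $J: \tilde{\DD}^{\{M_p\}}_{L^{\infty}_{\eta}} \to (\DD'^{\{M_p\}}_{L^1_{\eta}})'_b$ by
\begin{equation*}
\langle J(\varphi), f\rangle := \int_{\RR^d} g(x)\,P(-D)\varphi(x)\,dx + \int_{\RR^d} g_1(x)\,\varphi(x)\,dx
\end{equation*}
for any representation $f = P(D)g + g_1$ supplied by Theorem \ref{karak}$(iv)$. The integrals converge because $\varphi, P(-D)\varphi \in L^{\infty}_{\eta}$ (ultradifferential operators of class $\{M_p\}$ act continuously on $\tilde{\DD}^{\{M_p\}}_{L^{\infty}_{\eta}}$, as already remarked in the paragraph preceding Theorem \ref{edn}). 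Independence of the representation is verified by reducing via density to $f \in \DD^{\{M_p\}}(\RR^d)$, where both summands collapse to the standard duality bracket $\langle f,\varphi\rangle$. Continuity of $J$ into the strong bidual is then immediate from the uniformity of $B'$: for $f \in B'$,
\begin{equation*}
|\langle J(\varphi), f\rangle| \leq C_{\tilde{B}}\bigl(\|P(-D)\varphi\|_{L^{\infty}_{\eta}} + \|\varphi\|_{L^{\infty}_{\eta}}\bigr),
\end{equation*}
and the right-hand side is controlled by a single seminorm $\|\varphi\|_{L^{\infty}_{\eta},(r_p)}$ of $\tilde{\DD}^{\{M_p\}}_{L^{\infty}_{\eta}}$ for $(r_p)$ depending only on $P(D)$.

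Next, $J$ is surjective onto the bidual with continuous inverse. Given $T \in (\DD'^{\{M_p\}}_{L^1_{\eta}})'_b$, restricting $T$ to $L^1_{\eta} \hookrightarrow \DD'^{\{M_p\}}_{L^1_{\eta}}$ yields $\varphi_T \in L^{\infty}_{\eta} = (L^1_{\eta})'$; applying $T$ to $P(D)g$ for $g \in L^1_{\eta}$ and arbitrary ultradifferential $P(D)$ of class $\{M_p\}$ recovers the distributional derivatives in the form $\int g\,P(-D)\varphi_T\,dx = T(P(D)g)$, hence $P(-D)\varphi_T \in L^{\infty}_{\eta}$ with a norm controlled by the continuity of $T$. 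Ranging over all $P(D)$ and exploiting the uniformity in Corollary \ref{cor:bounded}$(iii)$ forces the projective-type bounds $\|\varphi_T\|_{L^{\infty}_{\eta},(r_p)} < \infty$ for every $(r_p) \in \mathfrak{R}$, so $\varphi_T \in \tilde{\DD}^{\{M_p\}}_{L^{\infty}_{\eta}}$ with $J(\varphi_T) = T$ by construction. By \cite[Lem. 3.4]{Komatsu3} applied to these projective bounds, one obtains the stronger estimate $\sup_{\alpha} m^{|\alpha|}\|D^{\alpha}\varphi_T\|_{L^{\infty}_{\eta}}/M_{\alpha} < \infty$ for some $m > 0$, placing $\varphi_T \in \DD^{\{M_p\}}_{L^{\infty}_{\eta}}$. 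A comparison of polar seminorms on the bidual, modelled on the proof of Theorem \ref{1517}, shows that $J^{-1}$ is continuous both into $\tilde{\DD}^{\{M_p\}}_{L^{\infty}_{\eta}}$ and into $\DD^{\{M_p\}}_{L^{\infty}_{\eta}}$; thus both spaces are topologically isomorphic to $(\dot{\mathcal{B}}^{\{M_p\}}_{\eta})''_b$, and hence to each other.

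The main obstacle is converting the projective-type seminorm bounds on $\varphi_T$ into the inductive-type topological identification with $\DD^{\{M_p\}}_{L^{\infty}_{\eta}}$: this is the $\{M_p\}$-specific subtlety that motivated introducing $\tilde{\DD}^{\{M_p\}}_{L^{\infty}_{\eta}}$ in the first place, and it is handled exactly as in Theorem \ref{1517}, relying crucially on the fact that Corollary \ref{cor:bounded}$(iii)$ provides a \emph{single} ultradifferential operator for each whole bounded set, so that the dual seminorm estimates on $T$ do not need to vary with the bounded set.
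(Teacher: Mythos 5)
Your overall architecture (identify the strong bidual of $\dot{\mathcal{B}}^{\{M_p\}}_{\eta}$ with a space of bounded ultradifferentiable functions, then read off both assertions) is the same as the paper's, and the first half of your argument is essentially sound. However, there are gaps, one of them serious. First, a repairable slip: the dual of $C_{\eta}$ is $\mathcal{M}^1_{\eta}$ (measures of the form $\eta^{-1}d\mu$ with $\mu$ finite), not $L^1_{\eta}$; Corollary \ref{cor:bounded}$(iii)$ therefore supplies $g,g_1$ in a bounded set of measures, so your pairing must be written against $dg$, or else you should invoke part $(iv)$ to get densities in $E'_{\ast}\cap UC_{\omega}=L^1_{\eta}\cap UC_{\omega_\eta}$. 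Second, the step ``ranging over all $P(D)$ \dots forces $\|\varphi_T\|_{L^{\infty}_{\eta},(r_p)}<\infty$'' is under-argued: applying $T$ to a single ultradifferential operator only controls the weighted \emph{sum} of the coefficients, not the individual derivatives, and extracting $\sup_{\alpha}\|D^{\alpha}\varphi_T\|_{L^{\infty}_{\eta}}/(M_{\alpha}R_{\alpha})$ requires exhibiting an explicit bounded family in $\DD'^{\{M_p\}}_{L^1_{\eta}}$ indexed by $\alpha$ (the paper uses $\bigl\{(\eta(a))^{-1}D^{\alpha}\delta_a/(M_{\alpha}R_{\alpha})\bigr\}$, which does this in one line).

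The serious gap is your claim that a ``comparison of polar seminorms modelled on the proof of Theorem \ref{1517}'' yields continuity of $J^{-1}$ \emph{into} $\DD^{\{M_p\}}_{L^{\infty}_{\eta}}$. Theorem \ref{1517} works because $\DD^{\{M_p\}}_{E}$ is barreled and complete, its topology is $b\bigl(\DD'^{\{M_p\}}_{E'_{\ast}},\DD^{\{M_p\}}_{E}\bigr)$, and Corollary \ref{cor:bounded} describes the bounded sets of that dual via the parametrix. None of this is available for $E=L^{\infty}_{\eta}$: axiom (I) fails ($\DD^{\ast}$ is not dense), so the dual of $\DD^{\{M_p\}}_{L^{\infty}_{\eta}}$ is not one of the spaces $\DD'^{\ast}_{E'_{\ast}}$ covered by the structure theorems, and there is no analogue of Corollary \ref{cor:bounded} to feed into the polar argument. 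The paper closes this gap differently: having shown the bidual is topologically $\tilde{\DD}^{\{M_p\}}_{L^{\infty}_{\eta}}$ (hence a $(DF)$-space, being the strong dual of the $(F)$-space $\DD'^{\{M_p\}}_{L^1_{\eta}}$), it proves the identity $\tilde{\DD}^{\{M_p\}}_{L^{\infty}_{\eta}}\to\DD^{\{M_p\}}_{L^{\infty}_{\eta}}$ continuous by checking continuity only on bounded sets (legitimate for $(DF)$-spaces by \cite[Cor. 6.7, p. 155]{Sch}), using \cite[Lem. 3.4]{Komatsu3} to show the two spaces have the same bounded sets and a direct net-convergence estimate on such sets. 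Some argument of this kind is indispensable, and your sketch does not contain it.
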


\begin{proof} First note that $\eta$ can be assumed to be continuous. (The continuous weight $\eta_{1}=\eta\ast \varphi$ defines equivalent norms if we choose $\varphi\in\mathcal{D}(\mathbb{R}^{d})$ to be nonnegative with $\int_{\mathbb{R}^{d}}\varphi(x)dx=1$.) We already saw that $\DD^{\{M_p\}}_{L^{\infty}_{\eta}}$ and $\tilde{\DD}^{\{M_p\}}_{L^{\infty}_{\eta}}$ are equal as sets. First we prove that the bidual of $\dot{\mathcal{B}}^{\{M_p\}}_{\eta}$ is isomorphic to $\tilde{\DD}^{\{M_p\}}_{L^{\infty}_{\eta}}$. Since $\EE'^{\{M_p\}}(\RR^d)$ is continuously and densely injected into $\DD'^{\{M_p\}}_{L^1_{\eta}}$ (the density can be proved by using cut-off functions and Theorem \ref{karak}) we have the continuous inclusion $\left(\DD'^{\{M_p\}}_{L^1_{\eta}}\right)'_{b}\rightarrow \EE^{\{M_p\}}(\RR^d)$ (where $b$ stands for the strong topology). Let $(r_p)\in\mathfrak{R}$ and set $R_{\alpha}=\prod_{j=1}^{|\alpha|}r_j$. Observe the set $\ds B=\left\{\frac{(\eta(a))^{-1}D^{\alpha}\delta_a}{M_{\alpha}R_{\alpha}}\Big|\, a\in\RR^d,\, \alpha\in\NN^d\right\}$. One easily proves that it is a bounded subset of $\DD'^{\{M_p\}}_{L^1_{\eta}}$. Hence, if $\psi\in \left(\DD'^{\{M_p\}}_{L^1_{\eta}}\right)'_{b}$, then $\psi(B)$ is bounded in $\CC$ and hence
\beqs
\sup_{a,\alpha}\frac{\left|(\eta(a))^{-1}D^{\alpha}\psi(a)\right|}{M_{\alpha}R_{\alpha}}=\sup_{T\in B}\left|\langle \psi, T\rangle\right|<\infty.
\eeqs
We obtain that $\left(\DD'^{\{M_p\}}_{L^1_{\eta}}\right)'\subseteq \DD^{\{M_p\}}_{L^{\infty}_{\eta}}$ and the inclusion $\left(\DD'^{\{M_p\}}_{L^1_{\eta}}\right)'_{b}\rightarrow \tilde{\DD}^{\{M_p\}}_{L^{\infty}_{\eta}}$ is continuous.

 Let $\psi\in \DD^{\{M_p\}}_{L^{\infty}_{\eta}}$. If $T\in \DD'^{\{M_p\}}_{L^1_{\eta}}$, then by Theorem \ref{karak} there exist an ultradifferential operator $P(D)$ of class $\{M_p\}$ and $f,f_1\in \mathcal{M}^1_{\eta}$ such that $T=P(D)f+f_1$. Let $df= \eta^{-1}dg$ and $df_1= \eta^{-1}dg_1$ for $g,g_1\in\mathcal{M}^1$. Define $S_{\psi}$ by
\beqs
S_{\psi}(T)=\int_{\RR^d}\frac{P(-D)\psi(x)}{\eta(x)}dg+\int_{\RR^d}\frac{\psi(x)}{\eta(x)}dg_1.
\eeqs
Obviously, the integrals on the right-hand side are absolutely convergent. We will prove that $S_{\psi}$ is a well defined element of $\left(\DD'^{\{M_p\}}_{L^1_{\eta}}\right)'$. Let $\tilde{P}(D)$, $\tilde{f},\tilde{f}_1\in\mathcal{M}^1_{\eta}$ be such that $T=\tilde{P}(D)\tilde{f}+\tilde{f}_1$ and let $d\tilde{f}= \eta^{-1}d\tilde{g}$ and $d\tilde{f}_1= \eta^{-1}d\tilde{g}_1$ for $\tilde{g},\tilde{g}_1\in\mathcal{M}^1$. Let $\chi\in\DD^{\{M_p\}}(\RR^d)$ be a function such that $\chi=1$ on the closed unit ball with center at $0$ and $\chi=0$ on $\{x\in\RR^d|\, |x|>2\}$. Set $\psi_n(x)=\chi(x/n)\psi(x)$, $n\in\ZZ_+$. Then it is easy to verify that
\beqs
\int_{\RR^d}\frac{P(-D)\psi_n(x)}{\eta(x)}dg\rightarrow \int_{\RR^d}\frac{P(-D)\psi(x)}{\eta(x)}dg&,&\, \int_{\RR^d}\frac{\psi_n(x)}{\eta(x)}dg_1\rightarrow\int_{\RR^d}\frac{\psi(x)}{\eta(x)}dg_1,\\
\int_{\RR^d}\frac{\tilde{P}(-D)\psi_n(x)}{\eta(x)}d\tilde{g}\rightarrow \int_{\RR^d}\frac{\tilde{P}(-D)\psi(x)}{\eta(x)}d\tilde{g}&,&\, \int_{\RR^d}\frac{\psi_n(x)}{\eta(x)}d\tilde{g}_1\rightarrow\int_{\RR^d}\frac{\psi(x)}{\eta(x)}d\tilde{g}_1,
\eeqs
when $n\rightarrow\infty$. Also, observe that for each $n\in\ZZ_+$
\beqs
\int_{\RR^d}\frac{P(-D)\psi_n(x)}{\eta(x)}dg+\int_{\RR^d}\frac{\psi_n(x)}{\eta(x)}dg_1= \int_{\RR^d}\frac{\tilde{P}(-D)\psi_n(x)}{\eta(x)}d\tilde{g}+\int_{\RR^d}\frac{\psi_n(x)}{\eta(x)} d\tilde{g}_1,
\eeqs
since both terms are equal to $\langle T,\psi_n\rangle$ in the sense of the duality $\left\langle \DD^{\{M_p\}}(\RR^d),\DD'^{\{M_p\}}(\RR^d)\right\rangle$. Hence, $S_{\psi}$ is a well defined mapping $\DD'^{\{M_p\}}_{L^1_{\eta}}\rightarrow \CC$, since it does not depend on the representation of $T$. To prove that it is continuous it is enough to prove that it maps bounded sets into bounded sets, since $\DD'^{\{M_p\}}_{L^1_{\eta}}$ is an $(F)$-space. Let $B$ be a bounded set in $\DD'^{\{M_p\}}_{L^1_{\eta}}$. By Corollary \ref{cor:bounded}, there exist an ultradifferential operator $P(D)$ of class $\{M_p\}$ and bounded subset $B_1$ of $\mathcal{M}^1_{\eta}$ such that each $T\in B$ can be represented by $T=P(D)f+f_1$ for some $f,f_1\in B_1$. By the way we defined $S_{\psi}$, it is easy to verify that $S_{\psi}(B)$ is bounded in $\CC$, so $S_{\psi}\in \left(\DD'^{\{M_p\}}_{L^1_{\eta}}\right)'$. We obtain that $\left(\DD'^{\{M_p\}}_{L^1_{\eta}}\right)'=\tilde{\DD}^{\{M_p\}}_{L^{\infty}_{\eta}}$ as sets and $\left(\DD'^{\{M_p\}}_{L^1_{\eta}}\right)'_{b}$ has stronger topology than the latter. Let $V=B^{\circ}$ be a neighborhood of zero in $\left(\DD'^{\{M_p\}}_{L^1_{\eta}}\right)'_{b}$ for $B$ a bounded subset of $\DD'^{\{M_p\}}_{L^1_{\eta}}$. By Corollary \ref{cor:bounded}, there exist an ultradifferential operator $P(D)$ of class $\{M_p\}$ and a bounded subset $B_1$ of $\mathcal{M}^1_{\eta}$ such that each $T\in B$ can be represented by $T=P(D)f+f_1$ for some $f,f_1\in B_1$. There exists $C_1\geq 1$ such that $\|\tilde{g}\|_{\mathcal{M}^1_{\eta}}\leq C_1$ for all $\tilde{f}\in B_1$. Also, since $P(D)=\sum_{\alpha}c_{\alpha}D^{\alpha}$ is of class $\{M_p\}$, there exist $(r_p)\in\mathfrak{R}$ and $C_2\geq 1$ such that $\left|c_{\alpha}\right|\leq C_2/(M_{\alpha}R_{\alpha})$ (see the proof of Proposition \ref{udozc}). Observe the neighborhood of zero $\ds W=\left\{\psi\in \tilde{\DD}^{\{M_p\}}_{L^{\infty}_{\eta}}\Big|\, \sup_{x,\alpha}\frac{\left|(\eta(x))^{-1}D^{\alpha}\psi(x)\right|}{M_{\alpha}\prod_{j=1}^{|\alpha|}(r_j/2)}\leq \frac{1}{2C_1C_2C_3}\right\}$ in $\tilde{\DD}^{\{M_p\}}_{L^{\infty}_{\eta}}$, where we set $C_3=\sum_{\alpha}2^{-|\alpha|}$. One easily verifies that $W\subseteq V$. We obtain that $\left(\DD'^{\{M_p\}}_{L^1_{\eta}}\right)'_{b}$ and $\tilde{\DD}^{\{M_p\}}_{L^{\infty}_{\eta}}$ are isomorphic l.c.s.. Hence, $\tilde{\DD}^{\{M_p\}}_{L^{\infty}_{\eta}}$ is a complete $(DF)$-space (since $\DD'^{\{M_p\}}_{L^1_{\eta}}$ is an $(F)$-space). Obviously, the identity mapping $\DD^{\{M_p\}}_{L^{\infty}_{\eta}}\rightarrow\tilde{\DD}^{\{M_p\}}_{L^{\infty}_{\eta}}$ is continuous and bijective. Since $\tilde{\DD}^{\{M_p\}}_{L^{\infty}_{\eta}}$ is a $(DF)$-space, to prove the continuity of the inverse mapping it is enough to prove that its restriction to every bounded subset of $\tilde{\DD}^{\{M_p\}}_{L^{\infty}_{\eta}}$ is continuous (see \cite[Cor. 6.7, p. 155]{Sch}). If $B$ is a bounded subset of $\tilde{\DD}^{\{M_p\}}_{L^{\infty}_{\eta}}$ then for every $(r_p)\in\mathfrak{R}$, $\ds \sup_{\psi\in B} \sup_{\alpha}\frac{\left\|D^{\alpha}\psi\right\|_{L^{\infty}_{\eta}(\RR^d)}}{M_{\alpha}R_{\alpha}}<\infty$. Hence, by \cite[Lem. 3.4]{Komatsu3}, there exists $h>0$ such that $\ds \sup_{\psi\in B} \sup_{\alpha}\frac{h^{|\alpha|}\left\|D^{\alpha}\psi\right\|_{L^{\infty}_{\eta}(\RR^d)}}{M_{\alpha}}<\infty$, that is, $B$ is bounded in $\DD^{\{M_p\}}_{L^{\infty}_{\eta}}$. Since every bounded subset of $\DD^{\{M_p\}}_{L^{\infty}_{\eta}}$ is obviously bounded in $\tilde{\DD}^{\{M_p\}}_{L^{\infty}_{\eta}}$, $\DD^{\{M_p\}}_{L^{\infty}_{\eta}}$ and $\tilde{\DD}^{\{M_p\}}_{L^{\infty}_{\eta}}$ have the same bounded sets. Let $\psi_{\lambda}$ be a bounded net in $\tilde{\DD}^{\{M_p\}}_{L^{\infty}_{\eta}}$ which converges to $\psi$ in $\tilde{\DD}^{\{M_p\}}_{L^{\infty}_{\eta}}$. Then there exist $0<h\leq 1$ and $C>0$ such that
\beqs
\sup_{\lambda} \sup_{\alpha}\frac{h^{|\alpha|}\left\|D^{\alpha}\psi_{\lambda}\right\|_{L^{\infty}_{\eta}}}{M_{\alpha}}\leq C \mbox{ and } \sup_{\alpha}\frac{h^{|\alpha|}\left\|D^{\alpha}\psi\right\|_{L^{\infty}_{\eta}}}{M_{\alpha}}\leq C.
\eeqs
Fix $0<h_1<h$. Let $\varepsilon>0$ be arbitrary but fixed. Take $p_0\in\ZZ_+$ such that $(h_1/h)^{|\alpha|}\leq \varepsilon/(2C)$ for all $|\alpha|\geq p_0$. Since $\psi_{\lambda}\rightarrow \psi$ in $\tilde{\DD}^{\{M_p\}}_{L^{\infty}_{\eta}}$, for the sequence $r_p=p$, $p\in\ZZ_+$, there exists $\lambda_0$ such that for all $\lambda\geq \lambda_0$ we have $\ds \sup_{\alpha}\frac{\left\|D^{\alpha}\left(\psi_{\lambda}-\psi\right)\right\|_{L^{\infty}_{\eta}}}{M_{\alpha}R_{\alpha}}\leq \frac{\varepsilon}{p_0!}$. Then for $|\alpha|<p_0$, we have $\ds \frac{h_1^{|\alpha|}\left\|D^{\alpha}\left(\psi_{\lambda}-\psi\right)\right\|_{L^{\infty}_{\eta}}}{M_{\alpha}}\leq \varepsilon$. For $|\alpha|\geq p_0$, we have
\beqs
\frac{h_1^{|\alpha|}\left\| D^{\alpha}\left(\psi_{\lambda}-\psi\right)\right\|_{L^{\infty}_{\eta}}}{M_{\alpha}}\leq 2C\left(\frac{h_1}{h}\right)^{|\alpha|}\leq \varepsilon.
\eeqs
It follows that $\psi_{\lambda}\rightarrow \psi$ in $\DD^{\{M_p\},h_1}_{L^{\infty}_{\eta}}$ and hence in $\DD^{\{M_p\}}_{L^{\infty}_{\eta}}$. We obtain that the topology induced by $\tilde{\DD}^{\{M_p\}}_{L^{\infty}_{\eta}}$ on every bounded subset of $\tilde{\DD}^{\{M_p\}}_{L^{\infty}_{\eta}}$ is stronger than the topology induced by $\DD^{\{M_p\}}_{L^{\infty}_{\eta}}$. Hence, the identity mapping $\tilde{\DD}^{\{M_p\}}_{L^{\infty}_{\eta}}\rightarrow \DD^{\{M_p\}}_{L^{\infty}_{\eta}}$ is continuous.
\end{proof}

\section{Convolution of ultradistributions}
We now apply our results to the study of the convolution of ultradistributions.
\label{section convolution}
\subsection{Convolution of Roumieu ultradistributions}
\label{subsection convolution 1}

As an application of Theorem \ref{te B} when $\eta=1$, we obtain a significant improvement to the following theorem from \cite[Thm. 1]{PB} for existence of convolution of Roumieu ultradistributions. For the sake of completeness, we recall the definition of the space $\dot{\mathcal{B}}^{\{M_p\}}_{\Delta}$ (cf. \cite[p. 97]{PB}) involved in this result. For $a>0$, we define $\dot{\mathcal{B}}^{\{M_p\}}_a=\{\varphi\in\dot{\mathcal{B}}^{\{M_p\}}(\RR^{2d})|\, \mathrm{supp}\:\varphi\subseteq \Delta_a\}$, where $\Delta_a=\{(x,y)\in\RR^{2d}|\, |x+y|\leq a\}$. Provided with family of seminorms
\beqs
\varphi\mapsto \sup_{\alpha,\beta\in\NN^d}\sup_{(x,y)\in\RR^{2d}} \frac{|D^{\alpha}_xD^{\beta}_y\varphi(x,y)|}{M_{\alpha+\beta}\prod_{j=1}^{|\alpha|+|\beta}r_j},\,\,\,\,\,\, \mbox{for}\,\, (r_p)\in\mathfrak{R},
\eeqs
$\dot{\mathcal{B}}^{\{M_p\}}_a$ becomes a l.c.s.. We define as l.c.s. $\ds\dot{\mathcal{B}}^{\{M_p\}}_{\Delta}=\lim_{\substack{\longrightarrow\\ a\rightarrow \infty}} \dot{\mathcal{B}}^{\{M_p\}}_a$.

\begin{te}[\cite{PB})]
Let $S,T\in\DD'^{\{M_p\}}\left(\RR^d\right)$. The following statements are equivalent:
\begin{itemize}
\item[$(i)$] the convolution of $S$ and $T$ exists;
\item[$(ii)$] $S\otimes T\in \left(\dot{\mathcal{B}}^{\{M_p\}}_{\Delta}\right)'$;
\item[$(iii)$] for all $\varphi\in\DD^{\{M_p\}}\left(\RR^d\right)$, $\left(\varphi*\check{S}\right)T\in\tilde{\DD}'^{\{M_p\}}_{L^1}$ and for every compact subset $K$ of $\RR^d$,  $(\varphi,\chi)\mapsto \left\langle\left(\varphi*\check{S}\right)T,\chi\right\rangle$, $\DD^{\{M_p\}}_K\times\dot{\tilde{\mathcal{B}}}^{\{M_p\}}\longrightarrow \CC$, is a  continuous bilinear mapping;
\item[$(iv)$] for all $\varphi\in\DD^{\{M_p\}}\left(\RR^d\right)$, $\left(\varphi*\check{T}\right)S\in\tilde{\DD}'^{\{M_p\}}_{L^1}$ and for every compact subset $K$ of $\RR^d$,
$(\varphi,\chi)\mapsto \left\langle\left(\varphi*\check{T}\right)S,\chi\right\rangle$, $\DD^{\{M_p\}}_K\times\dot{\tilde{\mathcal{B}}}^{\{M_p\}}\longrightarrow \CC$, is a continuous bilinear mapping;
\item[$(v)$] for all $\varphi,\psi\in\DD^{\{M_p\}}\left(\RR^d\right)$, $\left(\varphi*\check{S}\right)(\psi*T)\in L^1\left(\RR^d\right)$.
\end{itemize}
\end{te}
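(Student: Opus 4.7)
The plan is to close a cyclic chain of implications, with (iii) $\Leftrightarrow$ (iv) obtained by interchanging the roles of $S$ and $T$. The equivalence (i) $\Leftrightarrow$ (ii) is essentially the definition of Roumieu convolvability: one declares $\langle S*T,\varphi\rangle:=\langle S\otimes T,\varphi(x+y)\rangle$ for $\varphi\in\DD^{\{M_p\}}(\RR^d)$, after the routine observation that $(x,y)\mapsto \varphi(x+y)$ lies in $\dot{\mathcal{B}}^{\{M_p\}}_a$ whenever $\mathrm{supp}\,\varphi\subseteq\{|z|\leq a\}$, together with a seminorm check that matches the topology of $\dot{\mathcal{B}}^{\{M_p\}}_\Delta$.

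For (ii) $\Rightarrow$ (iv) the pivotal identity would be
$$\langle (\varphi*\check{T})S,\chi\rangle = \langle S\otimes T,\,\chi(x)\varphi(x+y)\rangle,\quad \varphi\in\DD^{\{M_p\}}_K,\ \chi\in\dot{\tilde{\mathcal{B}}}^{\{M_p\}},$$
whose right-hand side makes sense because, for $K\subseteq\{|z|\leq a\}$, the test function $(x,y)\mapsto\chi(x)\varphi(x+y)$ is supported in $\Delta_a$. The technical heart of this implication is showing that the bilinear assignment $(\varphi,\chi)\mapsto\chi(x)\varphi(x+y)$ is jointly continuous from $\DD^{\{M_p\}}_K\times\dot{\tilde{\mathcal{B}}}^{\{M_p\}}$ into $\dot{\mathcal{B}}^{\{M_p\}}_\Delta$. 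I would expand
$$D_x^\alpha D_y^\beta[\chi(x)\varphi(x+y)]=\sum_{\gamma\leq\alpha}\binom{\alpha}{\gamma}D^{\alpha-\gamma}\chi(x)\,(D^{\beta+\gamma}\varphi)(x+y)$$
and invoke $(M.2)$ together with Komatsu's lemma on rescaling $\mathfrak{R}$-sequences (the same machinery used in Proposition \ref{udozc}) to obtain the required seminorm bounds. This yields (iv), and (iii) follows by the symmetry $S\leftrightarrow T$.

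For (iv) $\Rightarrow$ (v) I would combine the joint continuity of (iv) with the structure theorem for $\tilde{\DD}'^{\{M_p\}}_{L^1}$: represent $(\varphi*\check{T})S=\sum_\alpha D^\alpha F_\alpha$ with $F_\alpha\in L^1$, convolve termwise with $\psi$ to get an absolutely convergent $L^1$-series, and identify the outcome with $(\varphi*\check{T})(\psi*S)$ via Fubini; (v) then follows after the role-swap $S\leftrightarrow T$. The closing arrow (v) $\Rightarrow$ (ii) is the delicate one and would be handled by Komatsu's parametrix on $\RR^{2d}$: for $\Phi\in \dot{\mathcal{B}}^{\{M_p\}}_\Delta$, choose $u,w\in\DD^{\{M_p\}}(\RR^{2d})$ supported near the origin and an ultradifferential operator $P(D_x,D_y)$ of class $\{M_p\}$ with $P(D_x,D_y)u=\delta+w$, so that $\Phi=P(D_x,D_y)(u*\Phi)+w*\Phi$; each summand can then be unwound into a finite sum of integrals of the form $\int(\varphi*\check{S})(\psi*T)dx$, which converge absolutely by (v), and this provides the required continuous extension of $S\otimes T$ to $\dot{\mathcal{B}}^{\{M_p\}}_\Delta$.

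The principal obstacle throughout is the Roumieu topology of $\dot{\mathcal{B}}^{\{M_p\}}_\Delta$: its seminorms are indexed by arbitrary $(r_p)\in\mathfrak{R}$, so every continuity claim requires a diagonal matching argument between the hypothesis $\mathfrak{R}$-sequences and those in the conclusion. In particular, the parametrix sequence, the decay rate of $\chi$, and the growth of $\varphi*\check{S}$ all have to be simultaneously controlled by a single cleverly chosen $(r_p)$, which explains why the bilinear continuity condition is forced into the statements (iii) and (iv): separate pointwise continuity in each variable is not enough in the Roumieu setting, and joint continuity is genuinely needed to execute the diagonal selection.
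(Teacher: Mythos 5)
First, a point of order: this theorem is not proved in the paper at all. It is Theorem 1 of the cited reference \cite{PB}, recalled verbatim as background so that the improvement of Subsection \ref{subsection convolution 1} (dropping the bilinear-continuity clauses from $(iii)$ and $(iv)$) can be stated. There is therefore no in-paper argument to compare your proposal against, and it has to be judged on its own terms.

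On those terms, your outline has the right overall architecture, but it has genuine gaps at exactly the points where the real work lies. The equivalence $(i)\Leftrightarrow(ii)$ is not ``essentially the definition'': the convolution is defined through the requirement that $\varphi(x+y)(S\otimes T)$ be an integrable ultradistribution on $\RR^{2d}$ for every $\varphi$, and identifying this with continuity of $S\otimes T$ on $\dot{\mathcal{B}}^{\{M_p\}}_{\Delta}$ is itself a structural theorem of the same depth as the duality $(\dot{\tilde{\mathcal{B}}}^{\{M_p\}})'=\tilde{\DD}'^{\{M_p\}}_{L^1}$. In $(iv)\Rightarrow(v)$ the identity you rely on is false: $((\varphi*\check{T})S)*\psi\neq(\varphi*\check{T})(\psi*S)$ in general (test with $S=\delta$), so convolving the structural representation termwise with $\psi$ does not produce the function appearing in $(v)$; the correct bridge is the exchange formula $\int(\varphi*\check{S})(\psi*T)\,dx=\left\langle S\otimes T,(\check{\psi}*\varphi)(x+y)\right\rangle$, which routes through $(ii)$ rather than through $(iv)$. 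Finally, in $(v)\Rightarrow(ii)$ the two essential inputs are missing: the parametrix kernel $u$ on $\RR^{2d}$ is not a tensor product, so ``unwinding'' $\langle S\otimes T,\Phi\rangle$ into finitely many integrals of the form $\int(\varphi*\check{S})(\psi*T)\,dx$ requires a genuine tensor decomposition of the kernel (this is precisely why \cite{PB} develops the $\varepsilon$ tensor product of $\dot{\tilde{\mathcal{B}}}^{\{M_p\}}$ with locally convex spaces); moreover, hypothesis $(v)$ only gives pointwise finiteness of individual integrals, so an equicontinuity or Baire-category step (of the kind used in the proof of Proposition \ref{S'}) is indispensable to obtain the continuity asserted in $(ii)$. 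Your closing remark correctly diagnoses the Roumieu seminorm-matching difficulty, but the sketch as written does not yet overcome it.
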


We now have:
\begin{te}
Let $S,T\in\DD'^{\{M_p\}}\left(\RR^d\right)$. Then the following conditions are equivalent
\begin{itemize}
\item[$(i)$] the convolution of $S$ and $T$ exists;
\item[$iii)'$] for all $\varphi\in\DD^{\{M_p\}}\left(\RR^d\right)$, $\left(\varphi*\check{S}\right)T\in \DD'^{\{M_p\}}_{L^1}$;
\item[$iv)'$] for all $\varphi\in\DD^{\{M_p\}}\left(\RR^d\right)$, $\left(\varphi*\check{T}\right)S\in \DD'^{\{M_p\}}_{L^1}$.
\end{itemize}
\end{te}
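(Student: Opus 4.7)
The plan is to reduce the theorem to Theorem 1 of \cite{PB}, which is stated just above. The reduction hinges on two observations: a topological identification between $\tilde{\DD}'^{\{M_p\}}_{L^1}$ and $\DD'^{\{M_p\}}_{L^1}$, and an automatic continuity argument that eliminates the bilinear continuity clause appearing in conditions (iii) and (iv) of \cite{PB}.

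For the topological identification, I would invoke Theorem \ref{edn} in the case $\eta \equiv 1$: the test spaces $\dot{\mathcal{B}}^{\{M_p\}}$ and $\dot{\tilde{\mathcal{B}}}^{\{M_p\}}$ are isomorphic as locally convex spaces, whence their strong duals---which by definition are $\DD'^{\{M_p\}}_{L^1}$ and $\tilde{\DD}'^{\{M_p\}}_{L^1}$ respectively---coincide as locally convex spaces. In particular, the first clause of condition (iii) of \cite{PB} is literally (iii)$'$, and analogously for (iv) and (iv)$'$.

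For the automatic continuity, fix $S,T$ satisfying (iii)$'$ and consider the linear operator $\Phi\colon \varphi \mapsto (\varphi*\check{S})T$. I would first prove that $\Phi\colon\DD^{\{M_p\}}_K \to \DD'^{\{M_p\}}_{L^1}$ is continuous for each compact $K$ by the Pt\'ak closed graph theorem: the source is a barreled space, the target is an $(F)$-space (hence a Pt\'ak space) by the observation preceding Theorem \ref{te B}, and the graph is closed because if $\varphi_n \to \varphi$ in $\DD^{\{M_p\}}_K$ and $\Phi(\varphi_n) \to T_1$ in $\DD'^{\{M_p\}}_{L^1}$, then the continuous embedding $\DD'^{\{M_p\}}_{L^1} \hookrightarrow \DD'^{\{M_p\}}(\RR^d)$, combined with the continuity of $\varphi \mapsto \varphi*\check{S}$ from $\DD^{\{M_p\}}_K$ into $\EE^{\{M_p\}}(\RR^d)$ and of the multiplication $\EE^{\{M_p\}}(\RR^d)\times\DD'^{\{M_p\}}(\RR^d)\to\DD'^{\{M_p\}}(\RR^d)$, forces $T_1 = \Phi(\varphi)$. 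Given this, the bilinear map $(\varphi,\chi)\mapsto\langle(\varphi*\check{S})T,\chi\rangle$ on $\DD^{\{M_p\}}_K\times\dot{\tilde{\mathcal{B}}}^{\{M_p\}}$ is separately continuous: in $\chi$ because $(\varphi*\check{S})T\in\tilde{\DD}'^{\{M_p\}}_{L^1}$, and in $\varphi$ by composing $\Phi$ with evaluation at $\chi$. Since both factors are barreled $(DF)$-spaces (the second via its identification with $\DD^{\{M_p\}}_{C_1}$ from Theorem \ref{edn}), separate continuity upgrades to joint continuity by \cite[Thm. 11, p. 161]{kothe1}.

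This establishes (iii)$\Leftrightarrow$(iii)$'$ and, by the symmetric argument with $S$ and $T$ interchanged, (iv)$\Leftrightarrow$(iv)$'$. Chaining with the equivalences (i)$\Leftrightarrow$(iii)$\Leftrightarrow$(iv) from \cite{PB} then completes the proof. The main obstacle has in fact already been overcome: it is the topological identification in Theorem \ref{edn}, whose proof required the structural theory developed in Sections \ref{tspace}--\ref{subsection DE} together with the parametrix method; once it is in hand, the argument above is essentially functional-analytic bookkeeping.
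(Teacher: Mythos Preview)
Your proposal is correct and follows essentially the same approach as the paper's proof: both reduce to \cite{PB} via the topological identification $\dot{\mathcal{B}}^{\{M_p\}}\cong\dot{\tilde{\mathcal{B}}}^{\{M_p\}}$ from Theorem \ref{edn}, establish continuity of $\varphi\mapsto(\varphi*\check{S})T$ into $\DD'^{\{M_p\}}_{L^1}$ by the Pt\'ak closed graph theorem, and then upgrade separate to joint continuity of the bilinear pairing using that both factors are barreled $(DF)$-spaces. Your write-up is in fact slightly more explicit than the paper's in spelling out why the graph is closed.
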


\begin{proof} We will prove that $(iii)\Leftrightarrow (iii)'$; the prove that $(iv)\Leftrightarrow (iv)'$ is similar. Observe that $(iii)\Rightarrow (iii)'$ is trivial. Let $(iii)'$ hold. Then, by Theorem \ref{edn}, $\DD'^{\{M_p\}}_{L^1}$ is an $(F)$-space as the strong dual of a $(DF)$-space. The mapping $\chi\mapsto \left\langle\left(\varphi*\check{S}\right)T,\chi\right\rangle$, $\dot{\mathcal{B}}^{\{M_p\}}\rightarrow\CC$ is continuous for each fixed $\varphi\in\DD^{\{M_p\}}_K$ since $\left(\varphi*\check{S}\right)T\in \DD'^{\{M_p\}}_{L^1}$. Fix $\chi\in\dot{\mathcal{B}}^{\{M_p\}}$. Then the mapping $\varphi\mapsto \left(\varphi*\check{S}\right)T$, $\DD^{\{M_p\}}_K\rightarrow \DD'^{\{M_p\}}\left(\RR^d\right)$ is continuous; hence, it has a closed graph. But $\left(\varphi*\check{S}\right)T\in \DD'^{\{M_p\}}_{L^1}$ and $\DD'^{\{M_p\}}_{L^1}$ is continuously injected into $\DD'^{\{M_p\}}\left(\RR^d\right)$; hence, the mapping $\varphi\mapsto \left(\varphi*\check{S}\right)T$, $\DD^{\{M_p\}}_K\rightarrow \DD'^{\{M_p\}}_{L^1}$ has a closed graph. We have that $\DD^{\{M_p\}}_K$ is barreled (in fact it is a $(DFS)$-space). Since $\DD'^{\{M_p\}}_{L^1}$ is an $(F)$-space it is a Pt\'{a}k space hence this mapping is continuous by the Pt\'{a}k closed graph theorem (cf. \cite[Thm. 8.5, p. 166]{Sch}). We obtain that for each fixed $\chi\in\dot{\mathcal{B}}^{\{M_p\}}$, the mapping $\varphi\mapsto \left\langle\left(\varphi*\check{S}\right)T,\chi\right\rangle$, $\DD^{\{M_p\}}_K\rightarrow\CC$ is continuous. Hence, the bilinear mapping $(\varphi,\chi)\mapsto \left\langle\left(\varphi*\check{S}\right)T,\chi\right\rangle$, $\DD^{\{M_p\}}_K\times\dot{\mathcal{B}}^{\{M_p\}}\rightarrow \CC$ is separately continuous. Since $\DD^{\{M_p\}}_K$ and $\dot{\mathcal{B}}^{\{M_p\}}$ are barreled $(DF)$-spaces, this mapping is continuous.
\end{proof}

\subsection{Relation between $\mathcal{D}'^*_{E'_{\ast}}$, $\mathcal{B}'^*_{\omega}$, and
$\mathcal{D}'^*_{L^{1}_{\check{\omega}}}$ -- Convolution and multiplication}
\label{subsection convolution 2}

We now study convolution and multiplicative products on $\mathcal{D}'^*_{E'_{\ast}}$. For it, we first need the following proposition.

\begin{pro}\label{prop4.13}
The following dense and continuous inclusions hold
$\mathcal{D}^*_{L^1_\omega}\hookrightarrow\mathcal{D}^*_E\hookrightarrow\dot{\mathcal{B}}^*_{\check{\omega}}$ and the inclusions $\mathcal{D}'^*_{L^1_{\check{\omega}}}\rightarrow\mathcal{D}'^*_{E'_{\ast}}\rightarrow\mathcal{B}'^*_{\omega}$ are continuous. If $E$ is reflexive, one has $\mathcal{D}'^*_{L^{1}_{\check{\omega}}}\hookrightarrow\mathcal{D}'^*_{E'}\hookrightarrow\dot{\mathcal{B}}'^*_{\omega}$.
\end{pro}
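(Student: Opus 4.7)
The approach is to establish the test function inclusions first via direct estimates, then transpose to obtain the dual inclusions, and finally upgrade these to dense ones in the reflexive case using the structural theorems.

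For $\mathcal{D}^{\ast}_{L^1_\omega}\hookrightarrow\mathcal{D}^{\ast}_{E}$, the key input is the estimate (\ref{111111177}) from the remark after Proposition \ref{smooth prop}: for every $m>0$ (resp.\ every $(k_p)\in\mathfrak{R}$) there exist $\tilde m>0$ and $C_1>0$ (resp.\ $(l_p)\in\mathfrak{R}$ and $C_1>0$) such that $\|\varphi\|_{E,m}\leq C_1\sup_\alpha \tilde m^{|\alpha|}\|D^\alpha\varphi\|_{1,\omega}/M_\alpha$ for $\varphi\in\mathcal{S}^\ast(\mathbb{R}^d)$. Since $\mathcal{S}^{\ast}(\mathbb{R}^{d})$ is dense in $\mathcal{D}^{\ast}_{L^1_\omega}$ (Proposition \ref{970} with $E$ replaced by $L^1_\omega$, invoking Theorem \ref{1517} in the Roumieu case) and $\mathcal{D}^{\ast}_{E}$ is complete (Proposition \ref{regular} in the Roumieu case, Fr\'echet in the Beurling case), this extends to a continuous map $\mathcal{D}^{\ast}_{L^1_\omega}\to\mathcal{D}^{\ast}_{E}$ which must coincide with the set-theoretic inclusion, as both agree with the continuous map into $\mathcal{S}'^{\ast}(\mathbb{R}^{d})$. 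Density then follows since $\mathcal{S}^{\ast}(\mathbb{R}^{d})\subseteq \mathcal{D}^{\ast}_{L^1_\omega}$ is already dense in $\mathcal{D}^{\ast}_{E}$ by Proposition \ref{970}.

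For $\mathcal{D}^{\ast}_{E}\hookrightarrow\dot{\mathcal{B}}^{\ast}_{\check{\omega}}$, Proposition \ref{smooth prop} yields that every $\varphi\in\mathcal{D}^{\ast}_{E}$ is smooth with $D^\alpha \varphi\in C_{\check\omega}$ and satisfies the growth condition (\ref{eqgrowth}); in particular $\varphi\in\mathcal{D}^{\ast}_{C_{\check\omega}}$. Continuity follows from the explicit bounds (\ref{bounds}) and (\ref{boundss}) obtained in its proof. By Theorem \ref{edn} (together with the obvious equality $\dot{\mathcal{B}}^{(M_p)}_\eta=\mathcal{D}^{(M_p)}_{C_\eta}$ in the Beurling case), $\mathcal{D}^{\ast}_{C_{\check\omega}}$ is topologically isomorphic to $\dot{\mathcal{B}}^{\ast}_{\check\omega}$, concluding the inclusion. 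Density is immediate from the density of $\mathcal{S}^{\ast}(\mathbb{R}^{d})$ in both spaces. The continuous inclusions $\mathcal{D}'^{\ast}_{L^1_{\check\omega}}\to\mathcal{D}'^{\ast}_{E'_\ast}\to\mathcal{B}'^{\ast}_{\omega}$ are then obtained by transposing the chain $\mathcal{D}^{\ast}_{L^1_\omega}\hookrightarrow\mathcal{D}^{\ast}_{E}\hookrightarrow\dot{\mathcal{B}}^{\ast}_{\check\omega}$ and invoking the identifications $(\dot{\mathcal{B}}^{\ast}_{\check\omega})'=\mathcal{D}'^{\ast}_{L^1_{\check\omega}}$, $(\mathcal{D}^{\ast}_{E})'=\mathcal{D}'^{\ast}_{E'_\ast}$, and $(\mathcal{D}^{\ast}_{L^1_\omega})'=\mathcal{B}'^{\ast}_{\omega}$ from Section \ref{examples}.

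In the reflexive case, density must be verified. For $\mathcal{D}'^{\ast}_{L^1_{\check\omega}}\hookrightarrow\mathcal{D}'^{\ast}_{E'}$, we use that $\mathcal{S}^{\ast}(\mathbb{R}^{d})\subseteq\mathcal{D}'^{\ast}_{L^1_{\check\omega}}$ (via the natural embedding, since $\mathcal{S}^\ast$ functions decay faster than any ultrapolynomial weight $\check\omega$) and is dense in $\mathcal{D}'^{\ast}_{E'}$ by Proposition \ref{prop:reflexive}; thus the image is dense. For $\mathcal{D}'^{\ast}_{E'}\hookrightarrow\dot{\mathcal{B}}'^{\ast}_{\omega}$, I would exploit the sharpened representation available under reflexivity (Theorem \ref{karak}$(v)$): every $f\in\mathcal{D}'^{\ast}_{E'}$ can be written as $f=\sum_{k\in J}P_k(D)f_k$ with $f_k\in E'\cap C_\omega$. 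Since each $f_k\in C_\omega$ is the $L^\infty_\omega$-limit of elements of $\mathcal{D}^{\ast}(\mathbb{R}^{d})$ and the ultradifferential operators act continuously on $\mathcal{B}'^{\ast}_\omega$ (via Proposition \ref{udozc} and Theorem \ref{te B}), each $P_k(D)f_k$ lies in $\dot{\mathcal{B}}'^{\ast}_\omega$, hence so does $f$. Density of the image then follows from $\mathcal{D}^{\ast}(\mathbb{R}^{d})\subseteq\mathcal{D}'^{\ast}_{E'}$ and the fact that $\mathcal{D}^{\ast}(\mathbb{R}^{d})$ is dense in $\dot{\mathcal{B}}'^{\ast}_\omega$ by definition.

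The main technical obstacle I anticipate is the inclusion $\mathcal{D}'^{\ast}_{E'}\hookrightarrow\dot{\mathcal{B}}'^{\ast}_{\omega}$ in the reflexive case. It relies essentially on the refined form of the representation in Theorem \ref{karak}$(v)$ (with $f_k\in C_\omega$ rather than merely $L^\infty_\omega$), and on the careful verification, especially in the Roumieu case, that approximation in $L^\infty_\omega$ followed by an ultradifferential operator yields convergence inside $\dot{\mathcal{B}}'^{\ast}_\omega$.
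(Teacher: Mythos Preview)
Your proof is correct and follows essentially the approach the paper has in mind; the paper itself omits the argument, referring to \cite[Thm.~4]{DPV}, and your proposal supplies precisely the ultradistributional analogues using the tools developed earlier (the estimate (\ref{111111177}), Proposition \ref{smooth prop} with bounds (\ref{bounds})--(\ref{boundss}), Theorem \ref{edn}, and transposition).

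One simplification concerning the step you flagged as an obstacle: for the inclusion $\mathcal{D}'^{*}_{E'}\subseteq\dot{\mathcal{B}}'^{*}_{\omega}$ in the reflexive case, the representation from Theorem \ref{karak}$(v)$ is not needed. Since $\mathcal{S}^{*}(\mathbb{R}^d)$ is dense in $\mathcal{D}'^{*}_{E'}$ by Proposition \ref{prop:reflexive} and the continuous inclusion $\mathcal{D}'^{*}_{E'}\to\mathcal{B}'^{*}_{\omega}$ is already established by transposition, the image of $\mathcal{D}'^{*}_{E'}$ lies in the closure of $\mathcal{S}^{*}(\mathbb{R}^d)$ in $\mathcal{B}'^{*}_{\omega}$; as $\mathcal{D}^{*}(\mathbb{R}^d)$ is dense in $\mathcal{S}^{*}(\mathbb{R}^d)$ and $\mathcal{S}^{*}(\mathbb{R}^d)\to\mathcal{B}'^{*}_{\omega}$ is continuous, this closure is contained in $\dot{\mathcal{B}}'^{*}_{\omega}$.
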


\begin{proof} The proof follows the same lines as in the distribution case \cite[Thm. 4]{DPV} (by using the analogous results for ultradistributions); we therefore omit it.
\end{proof}

By the above proposition and the fact $\DD^*(\RR^d)\hookrightarrow \mathcal{D}'^*_{L^{1}_{\eta}}$ (which is easily obtainable by direct inspection) we have $\mathcal{D}^*_{L^{1}_{\omega_{\eta}}}\hookrightarrow\mathcal{D}^*_{L^{p}_{\eta}}\hookrightarrow \dot{\mathcal{B}}^*_{\check{\omega}_{\eta}}$ and $\mathcal{D}'^*_{L^{1}_{\check{\omega}_{\eta}}}\hookrightarrow\mathcal{D}'^*_{L^{p}_{\eta}}\hookrightarrow \dot{\mathcal{B}}'^*_{\omega_{\eta}}$ for $1\leq p <\infty$.

In addition, a direct consequence of this proposition is that the spaces $\mathcal{D}_E^*$ are never Montel spaces when $\omega$ is a bounded weight. In fact, if $\varphi\in\mathcal{D}^*(\mathbb{R}^{d})$ is nonnegative with $\varphi(x)=0$ for $|x|\geq 1/2$ and $\theta\in\mathbb{R}^{d}$ is a unit vector, then $\{(T_{-j\theta}\varphi)/\omega(j\theta)\}_{j=0}$ is a bounded sequence in $\mathcal{D}^*_{L^1_\omega}$ and, hence, in $\mathcal{D}^*_{E}$ without any accumulation point.

It is also easy to verify that $\dot{\mathcal{B}}^*_{{\eta}}\hookrightarrow
\dot{\mathcal{B}}^*_{\omega_{\eta}}$ and $\dot{\mathcal{B}}'^*_{{\eta}}\hookrightarrow \dot{\mathcal{B}}'^*_{\omega_{\eta}}$.

 The multiplicative product mappings $\cdot:\mathcal{D}'^*_{L^{p}_{\eta}}\times
\mathcal{B}^*_{\eta}\to \mathcal{D}'^*_{L^{p}}$ and $\cdot:\mathcal{B}'^*_{\eta}\times
\mathcal{D}^*_{L^{p}_{\eta}}\to \mathcal{D}'^*_{L^{p}}$ are well defined and hypocontinuous for $1\leq p<\infty$. In particular, $f\varphi$ is an integrable ultradistribution whenever $f\in\mathcal{B}'^*_{\eta}$ and $\varphi\in\mathcal{D}^*_{L^{1}_{\eta}}$ or $f\in\mathcal{D}'^*_{L^{1}_{\eta}}$ and $\varphi\in\mathcal{B}^*_{\eta}$. If $(1/r)=(1/p_{1})+(1/p_{2})$ with $1\leq r, p_1,p_2<\infty$, it is also clear that the multiplicative product $\cdot:\mathcal{D}'^*_{L^{p_1}_{\eta_{1}}}\times \mathcal{D}^*_{L_{\eta_{2}}^{p_2}}\to \mathcal{D}'^*_{L^{r}_{\eta_{1}\eta_{2}}}$ is hypocontinuous. Clearly, the convolution product can always be canonically defined as a hypocontinuous mapping in the following situations, $\ast:\mathcal{D}'^*_{L^{p}_{\eta}}\times \mathcal{D}'^*_{L_{\omega}^{1}}\to \mathcal{D}'^*_{L^{p}_{\eta}}$, $1\leq p\leq \infty$, and $\ast:\dot{\mathcal{B}}'^*_{\eta}\times \mathcal{D}'^*_{L_{\omega}^{1}}\to \dot{\mathcal{B}}'^*_{\eta}$. Furthermore, such convolution products are continuous bilinear mappings. In fact, in the Roumieu case these spaces are $(F)$-spaces, and therefore, continuity is equivalent to separate continuity; for the Beurling case, it follows from the equivalence between hypocontinuity and continuity for bilinear mappings on $(DF)$-spaces (cf. \cite[Thm. 10, p. 160]{kothe1}).

We can now define multiplication and convolution operations on $\mathcal{D}'^*_{E'_{\ast}}$. In the next proposition we denote by $\mathcal{O}'^*_{C,b}(\RR^d)$ the space $\mathcal{O}'^*_C(\RR^d)$ equipped with the strong topology from the duality $\left\langle \mathcal{O}^*_C(\RR^d),\mathcal{O}'^*_C(\RR^d)\right\rangle$.

\begin{pro}\label{conv-prod} The convolution mappings $\ast:\mathcal{D}'^*_{E'_{\ast}}\times\mathcal{D}'^*_{L^{1}_{\check{\omega}}}\to \mathcal{D}'^*_{E'_{\ast}}$ and  $
\ast:\mathcal{D}'^*_{E'_{\ast}}\times\mathcal{O}'^*_{C,b}(\RR^d)\to \mathcal{D}'^*_{E'_{\ast}}
$ are continuous. The convolution and multiplicative products are hypocontinuous in the following cases:
$
\cdot:\mathcal{D}'^*_{E'_{\ast}}\times\mathcal{D}^*_{L^{1}_{\omega}}\to \mathcal{D}'^*_{L^{1}}
$,
$
\cdot:\mathcal{D}'^*_{L^{1}_{\check{\omega}}}\times\mathcal{D}^*_{E}\to \mathcal{D}'^*_{L^{1}}
$,
and
$
\ast:\mathcal{D}'^*_{E'_{\ast}}\times\mathcal{D}^*_{\check{E}}\to \mathcal{B}^*_{\omega}
$. If $E$ is reflexive, we have $
\ast:\mathcal{D}'^*_{E'}\times\mathcal{D}^*_{\check{E}}\to \dot{\mathcal{B}^*_{\omega}}
$.
\end{pro}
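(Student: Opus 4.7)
The plan is to handle all six claims uniformly by reducing each to a combination of: the structural decompositions of Theorem \ref{karak} (and \cite[Prop. 2]{PBD} for $\mathcal{O}'^*_C$), the $L^1_\omega$-module property of $\DD^*_E$ from Proposition \ref{970}, and the topological upgrade recalled just before the proposition (separate continuity equals continuity for bilinear maps between $(F)$-spaces, and hypocontinuity equals continuity for bilinear maps on barreled $(DF)$-spaces). Each map will first be defined by transposition from a continuous action of the second factor on an appropriate test-function space, and the required continuity or hypocontinuity will then be read off that action.

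For $\ast:\DD'^*_{E'_\ast}\times\DD'^*_{L^1_{\check\omega}}\to\DD'^*_{E'_\ast}$ I would set $\langle f\ast u,\varphi\rangle:=\langle f,\check u\ast\varphi\rangle$ for $\varphi\in\DD^*_E$, thereby reducing matters to showing that $(u,\varphi)\mapsto\check u\ast\varphi$ is a continuous bilinear map $\DD'^*_{L^1_{\check\omega}}\times\DD^*_E\to\DD^*_E$. Since $\DD'^*_{L^1_{\check\omega}}=(\DD^*_{C_{\check\omega}})'$ corresponds to the case $E=C_{\check\omega}$ in Section \ref{subsection DE}, Theorem \ref{karak} provides a decomposition $u=P(D)g+g_1$ with $g,g_1\in L^1_{\check\omega}$; then $\check u\ast\varphi=P(D)(\check g\ast\varphi)+\check g_1\ast\varphi$, and Proposition \ref{970} together with continuity of ultradifferential operators on $\DD^*_E$ gives the conclusion, with quantitative control via the estimate (\ref{eq5}). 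The convolution with $\mathcal{O}'^{\ast}_{C,b}$ is handled by the same template: first use $\DD^*_E\hookrightarrow\mathcal{O}^*_C$ densely (Proposition \ref{smooth prop}) to embed $\mathcal{O}'^{\ast}_C\hookrightarrow\DD'^*_{E'_\ast}$, then use the representation $u=P(D)F_1+F_2$ with $F_i\in L^\infty_{\exp M(r|\cdot|)}$ from \cite[Prop. 2]{PBD}. Since $\omega$ has at most ultrapolynomial growth by $(c)$ of Theorem \ref{ttt11}, for sufficiently large $r$ such $F_i$ belong to $L^1_{\check\omega}$, which reduces the problem to the previous case.

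For the hypocontinuous multiplications $\cdot:\DD'^*_{E'_\ast}\times\DD^*_{L^1_\omega}\to\DD'^*_{L^1}$ and its symmetric variant, I would define the product by $\langle f\varphi,\psi\rangle:=\langle f,\varphi\psi\rangle$ for $\psi\in\DD^*(\RR^d)$, extended by density to $\psi\in\dot{\mathcal{B}}^*=\DD^*_{C_1}$. A Leibniz expansion combined with the pointwise bound $|D^\beta\varphi\cdot D^\gamma\psi|\leq|D^\beta\varphi|\,\|D^\gamma\psi\|_{L^\infty}$ puts $\varphi\psi$ in $\DD^*_{L^1_\omega}$, which embeds continuously into $\DD^*_E$ by Proposition \ref{prop4.13}, so $f$ acts on it; the explicit seminorm estimates emerging from the Leibniz expansion yield the required hypocontinuity on bounded subsets of either factor. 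For $\ast:\DD'^*_{E'_\ast}\times\DD^*_{\check E}\to\mathcal{B}^*_\omega$ I would invoke Theorem \ref{karak}$(v)$ to write $f=\sum_{k\in J}P_k(D)f_k$ with $f_k\in E'_\ast\cap UC_\omega$, and set $(f\ast\varphi)(x)=\sum_{k\in J}\langle f_k,T_{-x}(P_k(-D)\check\varphi)\rangle$; the bound $\|T_{-x}\check\varphi\|_{E}\leq\omega(x)\|\check\varphi\|_E$ from $(c)$ of Theorem \ref{ttt11}, together with the continuity of ultradifferential operators on $\DD^*_{\check E}$, produces the $\mathcal{B}^*_\omega$ estimates for all derivatives of $f\ast\varphi$. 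In the reflexive case, Proposition \ref{convolution E E'} strengthens each $f_k\ast\check\varphi$ to $C_\omega$, and density of $\DD^*(\RR^d)$ in $\DD^*_{\check E}$ (Proposition \ref{970}) upgrades the target to $\dot{\mathcal{B}}^*_\omega$.

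The step I expect to be the main obstacle is the uniformity of all the above estimates in the Roumieu case, where bounds must be controlled uniformly over families indexed by $(r_p)\in\mathfrak{R}$; this is where Lemma \ref{ppp}, the parametrix of Komatsu, and the topological identification $\DD^{\{M_p\}}_E=\tilde{\DD}^{\{M_p\}}_E$ from Theorem \ref{1517} will play the same regularising role they did in Proposition \ref{udozc} and throughout Section \ref{subsection DE}, allowing the final passage from separate continuity or hypocontinuity to joint continuity via the $(F)$- or $(DF)$-space arguments.
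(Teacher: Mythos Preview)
Your proposal is correct and matches the paper's approach: the paper's proof is a one-line reference to \cite[Prop.~11]{DPV}, saying the argument ``goes along the same lines,'' and your sketch (define each map by transposition, feed in the structural decompositions from Theorem~\ref{karak} or \cite[Prop.~2]{PBD}, exploit the $L^1_\omega$-module property of $\DD^*_E$ from Proposition~\ref{970}, and upgrade separate continuity via the $(F)$/$(DF)$ dichotomy) is precisely that argument written out. One small inaccuracy: when you invoke Theorem~\ref{karak}(iv) for $E=C_{\check\omega}$ you get $g,g_1\in E'=\mathcal{M}^1_{\check\omega}$ (measures), not $L^1_{\check\omega}$; either extend the module inequality (\ref{eq5}) to measures by density, or use part~(v) to land in $E'_\ast=L^1_{\check\omega}$ directly.
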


\begin{proof} The proof goes along the same lines as in the distribution case \cite[Prop. 11]{DPV} (again, by using the analogous results for ultradistributions).
\end{proof}

Note that, as a consequence of Proposition \ref{conv-prod}, $f\varphi$ is an integrable ultradistribution (i.e., an element of $\DD'^*_{L^1}$) if $f\in\mathcal{D}'^*_{E'_{\ast}}$ and $\varphi\in\mathcal{D}^*_{L^{1}_{\omega}}$ or if
$f\in\mathcal{D}'^*_{L^{1}_{\check{\omega}}}$ and $\varphi\in\mathcal{D}^*_{E}$.

\end{document}